\newcolumntype{L}{>{$\displaystyle}l<{$}}
\newcolumntype{C}{>{$}c<{$}}
\tikzset{
	wt/.style={circle, draw=black, fill=black!40!white, inner sep=2pt, outer sep=0pt, minimum size=5pt}, 
	axiscolour/.style={blue}, 
}
\numberwithin{equation}{section}
\newtheorem{thm}{Theorem}[section]
\newtheorem{cor}[thm]{Corollary}
\newtheorem{lemma}[thm]{Lemma}
\newtheorem{prop}[thm]{Proposition}
\newtheorem{Properties}[thm]{Properties}
\newtheorem{theorem}[thm]{Theorem}
\newtheorem{corollary}[thm]{Corollary}
\newtheorem{proposition}[thm]{Proposition}
\newtheorem{conjecture}[thm]{Conjecture}
\newtheorem{assum}{Assumption}
\theoremstyle{definition}
\newtheorem{defin}[thm]{Definition}
\newtheorem{remark}[thm]{Remark}
\newtheorem{definition}[thm]{Definition}
\Crefname{thm}{Theorem}{Theorems}
\Crefname{prop}{Proposition}{Propositions}
\Crefname{lemma}{Lemma}{Lemmas}
\Crefname{cor}{Corollary}{Corollaries}
\Crefname{defn}{Definition}{Definitions}
\Crefname{prob}{Problem}{Problems}
\Crefname{conj}{Conjecture}{Conjectures}
\Crefname{assum}{Assumption}{Assumptions}
\Crefname{result}{Result}{Results}
\definecolor{darkgreen}{rgb}{0.1, 0.8, 0.1}
\newcommand{\alg}[1]{\mathfrak{#1}}  
\newcommand{\Mod}[1]{\mathcal{#1}}   
\newcommand{\VOA}[1]{\mathsf{#1}}    
\newcommand{\fld}[1]{\mathbb{#1}}    
\newcommand{\lat}[1]{\mathsf{#1}}    
\newcommand{\ZZ}{\fld{Z}}
\newcommand{\rlat}{\lat{Q}} 
\DeclarePairedDelimiter{\sqbrac}{\lbrack}{\rbrack} 
\DeclarePairedDelimiter{\set}{\lbrace}{\rbrace}
\DeclarePairedDelimiterX{\comm}[2]{\lbrack}{\rbrack}{#1 , #2}  
\DeclarePairedDelimiterX{\acomm}[2]{\lbrace}{\rbrace}{#1 , #2} 
\DeclarePairedDelimiterX{\inner}[2]{\lparen}{\rparen}{#1 , #2} 
\DeclarePairedDelimiterX{\super}[2]{\lparen}{\rparen}{#1 \delimsize\vert \mathopen{} #2} 
\newcommand{\SLA}[2]{\alg{#1}_{#2}}                       
\newcommand{\sfaut}{\sigma}                   
\newcommand{\sfmod}[2]{\sfaut^{#1}(#2)}       
\newcommand{\savoa}[2]{\VOA{L}_{#1}(#2)}  
\newcommand{\uavoa}[2]{\VOA{V}^{#1}(#2)}  
\newcommand{\slvoa}[1]{\savoa{#1}{\SLA{sl}{2}}}
\newcommand{\slirr}[1]{\Mod{L}_{#1}}              
\newcommand{\sldis}[1]{\Mod{D}_{#1}}              
\newcommand{\slindrel}[1]{\Mod{E}_{#1}}           
\newcommand{\slrel}[2]{\slindrel{#1;\Delta_{#2}}} 
\newcommand{\slproj}[1]{\Mod{P}_{#1}}             
\DeclareMathOperator{\chmap}{ch}
\newcommand{\Gr}[1]{\sqbrac[\big]{#1}}                               
\newcommand{\ch}[1]{\chmap \Gr{#1}}                                  
\DeclareMathOperator{\Ext}{Ext}
\DeclareMathOperator{\Hom}{Hom}
\newcommand{\ra}{\rightarrow}
\newcommand{\lra}{\longrightarrow}
\newcommand{\ses}[3]{0 \ra #1 \ra #2 \ra #3 \ra 0}      
\newcommand{\dses}[3]{0 \lra #1 \lra #2 \lra #3 \lra 0} 
\newcommand{\voa}{vertex operator algebra}
\newcommand{\g}{\mathfrak{g}}
\newcommand{\h}{\mathfrak{h}}
\newcommand{\p}{\mathfrak{p}}
\newcommand{\gu}{\mathfrak{u}}
\newcommand{\gl}{\mathfrak{l}}
\newcommand{\gb}{\mathfrak{b}}
\newcommand{\ag}{\widehat{\g}}
\newcommand{\ah}{\widehat{\h}}
\newcommand{\C}{\mathbb{C}}
\newcommand{\lam}{\lambda}
\newcommand{\cR}{\mathcal{R}}
\newcommand{\cD}{{\mf{sl}_2\on{-wtmod}_{\Omega}}}
\newcommand{\cE}{\mathcal{E}}
\newcommand{\ad}{\mathrm {ad}}
\newcommand{\Sing}{\mathrm{Sing}}
\newcommand{\im}{\mathrm{im}}
\newcommand{\mc}{\mathcal}
\newcommand{\mf}{\mathfrak}
\newcommand{\on}{\operatorname}
\newcommand{\bigWt}[1]{\VOA V^{#1}(\g)\on{-wtmod}}
\newcommand{\bigWtpg}[1]{\VOA V^{#1}(\g)\on{-wtmod}_{\geq 0}}
\newcommand{\bigWtO}[1]{\VOA V^{#1}(\g)\on{-wtmod}_{\mc{O}}}
\newcommand{\bigWtKL}[1]{\VOA V^{#1}(\g)\on{-wtmod}_{\on{KL}}}
\newcommand{\sWt}[1]{\VOA L_{#1}(\g)\on{-wtmod}}
\newcommand{\sWtpg}[1]{\VOA L_{#1}(\g)\on{-wtmod}_{\geq 0}}
\newcommand{\sWtO}[1]{\VOA L_{#1}(\g)\on{-wtmod}_{\mc{O}}}
\newcommand{\sWtKL}[1]{\VOA L_{#1}(\g)\on{-wtmod}_{\on{KL}}}
\newcommand{\sWtsl}[1]{\VOA L_{#1}(\mf{sl}_2)\on{-wtmod}}
\newcommand{\sWtpgsl}[1]{\VOA L_{#1}(\mf{sl}_2)\on{-wtmod}_{\geq 0}}
\begin{document}

\title{Weight representations of affine Kac-Moody algebras and small quantum groups}

\author{Tomoyuki Arakawa}
\address{Research Institute for Mathematical Sciences,
Kyoto University, Kyoto 606-8502, Japan 
	}
\email{arakawa@kurims.kyoto-u.ac.jp}

\author{Thomas Creutzig}
\address{
	Department of Mathematical and Statistical Sciences \\
	University of Alberta\\
	Edmonton, Canada T6G2G1
}
\email{creutzig@ualberta.ca}
\thanks{T.C.~is supported by a NSERC Discovery Grant}

\author{Kazuya Kawasetsu}
\address{Priority Organization for Innovation and Excellence, Kumamoto University, Kumamoto 860-8555, Japan
}
\email{kawasetsu@kumamoto-u.ac.jp}
\thanks{K.K. is partially supported by MEXT Japan ``Leading Initiative for Excellent Young Researchers (LEADER)'',
JSPS Kakenhi Grant numbers 19KK0065, 21K13775 and 21H04993.
}

\subjclass[2010]{Primary 17B69; Secondary 13A50}


\begin{abstract}
 We
study the weight modules over affine Kac-Moody algebras
from the view point of vertex algebras,
and
 determine the abelian category of weight modules for the simple affine vertex algebra $\slvoa{k}$ at any
non-integral admissible level $k$.
In particular,
we show that the principal block of the category of weight modules over admissible $\slvoa{k}$
is equivalent to that  of the corresponding (unrolled) small quantum group.

\end{abstract}

\maketitle

\onehalfspacing 

\section{Introduction}

In this paper we reveal a surprising connection between
the weight modules over affine Kac-Moody algebras and 
representations of small quantum groups.

 Although the classification of simple weight modules of an affine Kac-Moody algebra has been achieved in \cite{FT-weight},
very little is known about its structure as an abelian category.
Probably one of the reasons is that 
a priori it is not clear at all
whether the category of weight representations  over affine Kac-Moody algebras
possess nice structures as in the case of the Bernstein-Gelfand-Gelfand category $\mathcal{O}$ \cite{Kac-book}.
In fact it is known that the category of weight modules with finite-dimensional weight spaces over a finite dimensional complex simple Lie algebra
do {\em not} have  enough projectives \cite{Maz10}.

The key idea of this paper is that 
 we 
study the weight modules over affine Kac-Moody algebras
from the view point of vertex algebras.
Namely,
we study
the category of weight modules over the {\em simple affine vertex algebra},
which forms a full subcategory of that of the  corresponding affine Kac-Moody algebra.
When the simple affine vertex algebra is relatively nice,
we can expect that the corresponding
 category of weight modules has a nice structure.

Indeed,  in the case where the simple affine vertex algebra is integrable as a representation of the affine Kac-Moody algebra,
it is known \cite{FreZhu12} that the category of weight modules   is semisimple, and coincides with the category of integrable representations 
of the Kac-Moody algebra of the same level as that of the simple affine vertex algebra.
However,  in view of the representation theory of weight representations this category is not interesting in the sense that the only simple weight modules belonging to this category are highest weight representations. 

Next nicest simple affine vertex algebras are admissible affine vertex algebras,
which are {\em quasi-lisse} in the sense of \cite{AraKaw18} as proved in \cite{Ara09b}.
By \cite{AdaVer95,A12-2},  the category of modules over an admissible affine vertex algebra that belong to the category $\mathcal{O}$
is semisimple and consists of Kac-Wakimoto admissible representations \cite{KacWak88,KacWak89}.
However, unlike integrable affine vertex algebras, 
admissible affine vertex algebras do have non-highest weight weight representations,
and the classification of all simple weight representations with lower bounded conformal weights of an admissible affine vertex algebra has been achieved in  \cite{KawRid21}.

Let $\sWt{k}$ be the category of smooth weight representations with finite-dimensional weight spaces 
 of the simple affine vertex algebra $\savoa{k}{\mathfrak{g}}$.
Our main conjecture is the following.
\begin{conjecture}\label{main-conjecture} 
Let $\savoa{k}{\mathfrak{g}}$ be an admissible affine vertex 
algebra at a 
principal admissible level $k$.
\begin{enumerate}
\item The category  $\sWt{k}$  of 
has enough projectives.
More precisely,
every simple object has a finite length projective cover.
\item
 The principal block of $\sWt{k}$  
is equivalent to that of the (unrolled) small quantum group $\mathfrak{u}_t(\check{\g})$ associated with the Langlands dual $\check{\mathfrak{g}}$
of $\g$ with $t=e^{\pi \sqrt{-1}(k+h^{\vee})}$,
where $h^{\vee}$ is the dual Coxeter number of $\g$.
\end{enumerate}

\end{conjecture}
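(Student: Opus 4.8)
To approach \cref{main-conjecture} I would work one block at a time, treating $\g = \SLA{sl}{2}$ (where $\check{\g} = \SLA{sl}{2}$) first. Restricting attention to the principal block costs nothing, since the spectral flow $\sfaut$ and the conjugation $\conjaut$ are exact autoequivalences of $\sWt{k}$ under which every block is equivalent to a spectral-flow twist of the principal one. By the classification of \cite{KawRid21}, up to spectral flow every simple object of $\sWt{k}$ is either an admissible highest-weight module lying in category $\mathcal O$ or a relaxed highest-weight module, and inside $\sWt{k}$ the subcategory $\mathcal O$ is semisimple \cite{AdaVer95,A12-2}. The relaxed modules occur in continuous families indexed by a weight modulo the root lattice, all but finitely many members of each family being simple; these \emph{typical} relaxed modules carry no nontrivial self-extensions and no extensions with other simples, so they are projective and injective in $\sWt{k}$ and their blocks are semisimple. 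Hence all non-semisimplicity --- and so the whole of part~(1) --- is concentrated at the finitely many \emph{atypical} relaxed modules together with the highest-weight modules occurring as their sub- and quotients, and the principal block is exactly one such atypical block, namely the one containing $\slvoa{k}$.

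\textbf{Projective covers for $\SLA{sl}{2}$.} Writing $k+2 = p/q$ with $q \geq 2$, I would construct the projective covers by importing homological structure through a free-field realization. On one side the quantum Drinfeld--Sokolov reduction of $\slvoa{k}$ is the Virasoro minimal model $\virminmod{p,q}$; on the other there is an inverse-reduction realization of $\slvoa{k}$ as a vertex subalgebra of $\virminmod{p,q} \otimes \Pi(0)$, where $\Pi(0)$ is a half-lattice vertex algebra closely related to the $\beta\gamma$ system. Since $\virminmod{p,q}$ is rational, I expect every projective of $\sWtsl{k}$ to be a direct summand of the restriction to $\slvoa{k}$ of an external tensor product $N \boxtimes P$ of a simple $\virminmod{p,q}$-module $N$ with one of the well-understood staggered projective modules $P$ of the $\beta\gamma$ system. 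Decomposing these restrictions and checking the projective-cover property by computing spaces of intertwining operators (equivalently, $\Hom$ and $\Ext^{1}$ in $\sWtsl{k}$) should yield finite-length covers of composition length four with the familiar diamond-shaped Loewy structure --- the atypical relaxed module at the top and at the socle, and in the middle a sum of two highest-weight composition factors that are spectral flows of one another. This would establish part~(1) for $\g = \SLA{sl}{2}$.

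\textbf{The equivalence for $\SLA{sl}{2}$.} Here $t = e^{\pi\sqrt{-1}(k+2)} = e^{\pi\sqrt{-1}p/q}$, and once the covers are in hand the comparison is combinatorial, because at this $t$ the unrolled small quantum group $\gu_{t}(\SLA{sl}{2})$ likewise has a one-parameter family of typical projective simples, finitely many atypical simples in its principal block, diamond-shaped projective covers of them of composition length four, and invertible objects matching the spectral flow. I would define a functor $F$ sending a relaxed module of relaxed weight $\lambda$ to the weight module on which the unrolled Cartan acts through $\lambda$, sending an atypical highest-weight module to the matching atypical simple, and sending $\sfaut$ to tensoring with the appropriate invertible object; comparing the data assembled above, one checks that $F$ is a bijection on simples and an isomorphism on $\Ext$-quivers, hence an equivalence of the two abelian principal blocks. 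Matching in addition the Huang--Lepowsky--Zhang tensor product on $\sWtsl{k}$ with the quantum-group braiding --- verified on fusion products of relaxed modules with relaxed and with highest-weight modules, and extended by rigidity --- would upgrade $F$ to a braided tensor equivalence and recover the known $\slvoa{k}$ fusion rules.

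\textbf{General $\g$, and the main obstacle.} The reason the quantum group appears is conceptual and general: in a Wakimoto-type free-field realization of the weight modules of $\savoa{k}{\g}$, the Hopf algebra generated by the long screening operators is the unrolled small quantum group $\gu_{t}(\check{\g})$, and it is exactly the presence of the long (rather than short) screenings that forces the Langlands dual $\check{\g}$ and the value $t = e^{\pi\sqrt{-1}(k+h^{\vee})}$. Accordingly, for a general principal admissible level I would use \cite{KawRid21} for the simple objects and their atypical locus (now governed by the affine Weyl group of $\g$), a relaxed Wakimoto or inverse-reduction realization $\savoa{k}{\g} \hookrightarrow \VOA{W}_{k}(\g) \otimes (\text{a lattice-type factor})$ to construct the projective covers, and the screening functor to identify the block quiver with that of $\gu_{t}(\check{\g})$. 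The main obstacle is this middle step, that is, part~(1) beyond rank one: outside $\SLA{sl}{2}$ (and, with more work, $\SLA{sl}{3}$) we have neither a classification of the indecomposable weight modules and their extension groups nor a free-field realization explicit enough to build the projective covers and prove their finite length; and even granting the abelian equivalence, upgrading it to a ribbon equivalence would require fusion computations in $\sWt{k}$ that are not presently available. This is why the statement is posed as a conjecture, with $\g = \SLA{sl}{2}$ the case I would expect to carry through completely.
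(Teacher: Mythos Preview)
The statement is a conjecture, and the paper only proves it for $\g=\SLA{sl}{2}$, so your closing remarks about general $\g$ are appropriate. For $\SLA{sl}{2}$, however, your proposed route differs substantially from the paper's, and it contains a misidentification and an unjustified step.

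\textbf{Different approach.} The paper does not construct projective covers via inverse reduction $\slvoa{k}\hookrightarrow \virminmod{u,v}\otimes\Pi(0)$ and does not argue by decomposing restrictions of $N\boxtimes P$. Instead it proceeds intrinsically: first it shows that $\Ext^1$ in $\sWtpg{k}$ is controlled by the top level (Proposition~4.3), then derives a weight-support vanishing criterion for $\Ext^1$ in $\sWt{k}$ (Theorem~5.2), and uses Adamovi\'c's logarithmic construction together with a new contragredient-dual trick (Appendix~A) to produce the length-four indecomposables $P_n$. It then proves abstractly (Section~6) that four listed properties of $\Ext^1$ --- not only between simples but also between the length-two $E_n^{\pm}$ and simples --- already force the block to be the one described, and the same properties hold for $\gu_t^H(\SLA{sl}{2})$. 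Your inverse-reduction idea is a reasonable alternative and is close in spirit to what was done for $k=-\tfrac12,-\tfrac43$ in \cite{CMY5}, but it is not what the paper does.

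\textbf{A misidentification.} Your description of the projective cover is inverted. The simple objects in an atypical block are spectral flows of the highest-weight modules $\sldis{r,s}^{\pm}$ (and $\slirr{r}$), \emph{not} relaxed modules; the simple relaxed modules $\slrel{\lambda}{r,s}$ sit in the typical blocks and are already projective there. The Loewy diagram of $P_n$ therefore has a highest-weight-type simple $L_n$ at the head and socle, with $L_{n-1}$ and $L_{n+1}$ in the middle. The length-two indecomposable relaxed modules $\slindrel{r,s}^{\pm}$ are the $E_n^{\pm}$ appearing as subquotients, not the composition factors themselves.

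\textbf{An unjustified step.} Saying that a bijection on simples together with an isomorphism of $\Ext^1$-quivers yields an equivalence of abelian categories is not correct in general, and the paper does not argue this way. What the paper shows is stronger and more delicate: beyond $\Ext^1(L_n,L_m)$ one must also know the vanishing and one-dimensionality of $\Ext^1(E_n^{\pm},L_m)$ and $\Ext^1(L_m,E_n^{\pm})$, and the non-vanishing of $\Ext^1(E_n^{\pm},E_{n\pm1}^{\pm})$ (Properties~6.1). Only with this extra input does Section~6 pin down all indecomposables (the zig-zag modules) and all higher $\Ext$ groups, and hence the abelian category up to equivalence. If you pursue the free-field approach, you would still need to establish these additional $\Ext^1$ computations, or else give a direct proof that your $P_n$ are projective and injective in $\sWtsl{k}$; simply matching the simple-to-simple $\Ext^1$ is not enough. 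Finally, the conjecture only asserts an abelian equivalence of blocks; the braided-tensor upgrade you sketch is a separate (and harder) question that the paper treats only in outlook.
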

Beyond $\mathfrak{sl}_2$, the case of $\mathfrak{sl}_3$ at admissible level has been studied in more detail in \cite{Kawasetsu:2021qls, ACG}. In the instance of the level $-\frac{3}{2}$ the results have been compared to the category of weight modules of the unrolled small quantum group $\mathfrak{u}^H_{\sqrt{-1}}(\mathfrak{sl}_3)$
in \cite{Creutzig:2021seo} and are consistent with our conjecture.

In this paper we prove Conjecture \ref{main-conjecture}
for $\mathfrak{g}=\mathfrak{sl}_2$. More precisely, 
we show that a block of $\sWt{k}$  is equivalent to either the {\em typical} category $\mathcal C^{\text{typ}}$
or the  {\em atypical} category $\mathcal C^{\text{atyp}}$,
where $\mathcal C^{\text{typ}}$ is the category of finite-dimensional vector spaces that is semisimple  with a single simple object
and $\mathcal C^{\text{atyp}}$ is the category that is discusses in detail in section \ref{sec:atyp}. 
The category $\mathcal C^{\text{atyp}}$  has inequivalent simple objects $L_n$ for $n\in \mathbb Z$ and the projective cover and injective hull $P_n$ of $L_n$ has Loewy diagram 
%
\begin{center}
\begin{tikzpicture}[scale=1]
\node (top) at (0,2) [] {$L_n$};
\node (left) at (-2,0) [] {$L_{n-1}$};
\node (right) at (2,0) [] {$L_{n+1}$};
\node (bottom) at (0,-2) [] {$L_n$};
\draw[->, thick] (top) -- (left);
\draw[->, thick] (top) -- (right);
\draw[->, thick] (left) -- (bottom);
\draw[->, thick] (right) -- (bottom);
\node (label) at (0,0) [circle, inner sep=2pt, color=white, fill=black!50!] {$P_n$};
\end{tikzpicture}
\end{center}
Next consider the set $\{ (r, s) \in \mathbb Z^2 \, | \, 1 \leq r \leq u-1, \ 1 \leq s \leq v-1\}$ with the equivalence relation $(r, s) \sim (r, s)$ and $(r, s) \sim (u-r, v-s)$ and let $I_{u, v}$ denote the set of equivalence classes.
\begin{theorem}
Let $k = -2+\frac{u}{v}$ for $(u, v) =1$, $u, v \in \mathbb Z_{\geq 2}$ be a non-integrable admissible level for $\g=\mathfrak{sl}_2$. 
Let $\lambda_{r, s} = r -1 -\frac{u}{v}s$.
The category $\sWtsl{k}$ has the block decomposition
\begin{equation}
\sWtsl{k} \cong \Bigg( \bigoplus_{r=1}^{u-1} \bigoplus_{s=0}^{v-1} \ \mathcal C^{\text{atyp}} \Bigg) \ \oplus  \
 \Bigg( \bigoplus_{(r, s) \in I_{u, v}}  \ \bigoplus_{\ell \in \mathbb Z}  \ \bigoplus_{\substack{\lambda \in \mathbb C/2\mathbb Z \\ \lambda \neq \lambda_{r, s}, \lambda_{u-r, v-s}\, \text{mod}\, 2 } } \  \mathcal C^{\text{typ}} \Bigg)
\end{equation}
and $\mathcal C^{\text{atyp}}$ is equivalent to the principal block of the unrolled small quantum group $\mathfrak{u}_t( \mathfrak{sl}_2)$  with $t=e^{\pi \sqrt{-1}(k+2)}$,

\end{theorem}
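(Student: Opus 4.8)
\emph{Plan.} To pin down $\sWtsl{k}$ up to equivalence I would work in three stages: (1) list the simple objects; (2) sort them into linkage classes and fix the combinatorics of the resulting block labels; (3) in each block either prove it is semisimple with a single simple (the typical case) or construct the projective covers and read off their Loewy structure (the atypical case). Stage (1) is essentially available: by the classification of simple smooth weight $\savoa{k}{\mathfrak{sl}_2}$-modules with finite-dimensional weight spaces \cite{KawRid21}, these are, up to spectral flow $\sigma$, the admissible highest-weight modules $\slirr{r,s}$ ($1\le r\le u-1$, $0\le s\le v-1$), their conjugates $\conjmod{\slirr{r,s}}$, and the simple relaxed highest-weight modules, the last of which carry a Cartan coset $[\lambda]\in\CC/2\ZZ$ together with a conformal weight constrained by the requirement that the image of the singular vector of $\uavoa{k}{\mathfrak{sl}_2}$ annihilate the module. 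It is precisely this singular-vector constraint, invisible at the level of the Kac--Moody algebra, that restores the finiteness which general weight categories lack \cite{Maz10}.

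\emph{The block decomposition.} First I would show that two invariants are locally constant on $\sWtsl{k}$: the $h_0$-coset $[\lambda]\in\CC/2\ZZ$ and the generalised eigenvalue of the Casimir $\Omega$ of the horizontal $\mathfrak{sl}_2$. Thus $\sWtsl{k}$ is graded over such pairs, and the remaining structure is transported, via the top-space functor, into the zero-mode category $\cD$ of $\mathfrak{sl}_2$-weight modules with bounded Casimir. Annihilation by the singular vector forces the top-space Casimir to be a root of an explicit polynomial whose degree is governed by $u,v$: for a fixed coset $[\lambda]$ this leaves finitely many Casimir values, all \emph{typical} (the relaxed Verma module is already simple, projective and injective, since the Casimir and the conformal-weight grading separate any would-be self-extension, whence $\Ext^1=0$) unless $[\lambda]$ lies on one of the \emph{atypical lines} $[\lambda]\equiv\lambda_{r,s},\lambda_{u-r,v-s}\bmod 2$. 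I would then track how spectral flow permutes these graded pieces: a typical simple and its $\sigma$-images lie in pairwise distinct blocks, and organising the bookkeeping by the atypicality class of the Casimir value (indexed by the Kac-table classes $I_{u,v}$), the admissible coset $[\lambda]$ away from its two exceptional values, and the spectral-flow index $\ell\in\ZZ$ reproduces exactly the typical summand $\bigoplus_{(r,s)\in I_{u,v}}\bigoplus_{\ell\in\ZZ}\bigoplus_{[\lambda]\neq\lambda_{r,s},\lambda_{u-r,v-s}}\mathcal C^{\text{typ}}$, while the $(u-1)v$ atypical lines give $(u-1)v$ further blocks, each of which I must show is equivalent to the single category $\mathcal C^{\text{atyp}}$.

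\emph{Atypical blocks and projectivity.} Fix an atypical line. The discrete modules lying in its block form a single bi-infinite family $\{L_n\}_{n\in\ZZ}$, built from the $\sigma^\ell\slirr{r,s}$ and their conjugates and ordered by increasing bottom conformal weight; these exhaust the simples in the block, so the simple set already matches $\mathcal C^{\text{atyp}}$. The crux is to produce, for each $n$, a finite-length projective cover $P_n$ of $L_n$. My candidate for $P_n$ is the fully relaxed (Verma-type) $\savoa{k}{\mathfrak{sl}_2}$-module attached to these atypical parameters, or a suitable spectral flow of it, and I would proceed in three steps: (a) compute its composition factors from the known reducibility structure of relaxed Verma modules over $\widehat{\mathfrak{sl}_2}$ together with the passage to the $\savoa{k}{\mathfrak{sl}_2}$-quotient, under which the two proper nontrivial subquotients are spectral flows of discrete modules, namely $L_{n-1}$ and $L_{n+1}$, while $L_n$ occurs with multiplicity two; (b) establish that this module is projective, which is the real obstacle, discussed below; (c) record the extension data among the simples, $\Ext^1(L_n,L_m)=\CC$ for $|n-m|=1$ and $0$ otherwise (for discrete $\widehat{\mathfrak{sl}_2}$-modules this is known, and it is also forced by the length-two relaxed modules that glue adjacent $L_n$). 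Given (a)--(c), projectivity together with the head being $L_n$ pins the radical layers down to $L_{n-1}\oplus L_{n+1}$ and then $L_n$, so $P_n$ is indecomposable with the stated diamond Loewy diagram; contragredient duality, which is exact and permutes the $P_n$ among themselves, then shows that $P_n$ is simultaneously the injective hull of $L_n$. Since every simple in the block is some $L_n$ with projective cover $P_n$, $\sWtsl{k}$ has enough projectives, the $P_n$ are projective generators, and $P\mapsto\bigl(\Hom(P_m,P)\bigr)_m$ identifies the block with locally finite modules over the basic algebra $\bigoplus_{m,m'}\Hom(P_m,P_{m'})$, a bi-infinite type-$A$ quiver with arrows in both directions modulo the evident relations, which is by definition the category $\mathcal C^{\text{atyp}}$ of section \ref{sec:atyp}.

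\emph{The quantum group, and the hard part.} To finish I would recall the structure of the unrolled small quantum group $\mathfrak u_t(\mathfrak{sl}_2)$ at $t=e^{\pi\sqrt{-1}(k+2)}=e^{\pi\sqrt{-1}u/v}$: unrolling produces a $\ZZ$-family of simples in the principal block, their self-extensions vanish, and their projective covers carry precisely the diamond Loewy diagram above, the classical picture for restricted/unrolled quantum $\mathfrak{sl}_2$ at a root of unity. Comparing the two quivers-with-relations, equivalently the endomorphism algebras of projective generators, then yields $\mathcal C^{\text{atyp}}\simeq$ (principal block of $\mathfrak u_t(\mathfrak{sl}_2)$) and completes the theorem. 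The step I expect to fight hardest for is (b): general weight categories, even over $\mathfrak{sl}_2$, possess no projectives \cite{Maz10}, so projectivity of the atypical relaxed module has to be extracted from the special nature of the admissible simple affine vertex algebra. I would attempt it either representation-theoretically, exhibiting $P_n$ as a direct summand of a module induced from a block of $\cD$ whose relevant finite-dimensional quotient of the image of Zhu's algebra is self-injective, or tensor-categorically, using a rigid braided tensor structure on the ordinary-module category acting on $\sWtsl{k}$ so that a projective generator of the corresponding block of category $\mathcal{O}$, fused into $\sWtsl{k}$, produces the $P_n$ as summands; in either approach the genuine work is forcing $\Hom(P_n,-)$ to be exact on the block, with a secondary but delicate point being to make the spectral-flow and Kac-table bookkeeping land precisely on $I_{u,v}$ and on the index ranges in the statement.
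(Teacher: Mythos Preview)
Your high-level outline (classify simples, separate blocks by Cartan coset and Casimir, then analyse each block) matches the paper, and your treatment of the typical blocks is fine. The genuine gap is in the atypical case, specifically in your candidate for $P_n$ and your proposed projectivity argument.

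\textbf{The candidate is wrong.} The ``fully relaxed (Verma-type) $\savoa{k}{\mathfrak{sl}_2}$-module attached to these atypical parameters'' does not have the length-four diamond structure you describe. The almost irreducible relaxed module at atypical parameters is $\slindrel{r,s}^\pm$, of length two (this is exactly the $E^\pm_n$ of the paper), and the full relaxed Verma $M_k(N)$ is not an $\savoa{k}{\mathfrak{sl}_2}$-module. No spectral flow of a relaxed highest-weight module produces the required $P_n$. In the paper the $P_n$ are genuinely new objects: they are \emph{logarithmic} modules, not lower-bounded, constructed as non-split extensions $\ses{E^+_n}{P_n}{E^+_{n-1}}$ where $E^+_n$ and $E^+_{n-1}$ live at \emph{different} spectral-flow indices. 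Their construction (Adamovi\'c's realisation, supplemented by a contragredient-duality trick in the Appendix for the boundary cases $s=v-1$) is one of the substantive inputs, and you have no replacement for it.

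\textbf{The projectivity argument is unavailable.} Your first route, via self-injectivity of a quotient of Zhu's algebra, controls only lower-bounded modules and says nothing about extensions between objects at different spectral-flow depths, which is exactly where the issue lies. Your second route assumes a rigid braided tensor structure on $\sWtsl{k}$; the paper explicitly notes that this is not yet established (indeed, one of the stated consequences of the paper is that its results feed \emph{into} the eventual construction of such a structure). What the paper actually does is the reverse of your plan: rather than building $P_n$ first and then reading off $\Ext^1$, it proves a collection of $\Ext^1$-vanishing statements directly by elementary weight-support and PBW arguments (the ``type $\Pi$'' criterion, Theorem~\ref{thm:extaaa}), combines these with the existence of the logarithmic extensions to obtain the four Properties~\ref{properties}, and then shows by long-exact-sequence bookkeeping that these four properties already force the existence of $P_n$ and pin down the entire abelian block. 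Projectivity of $P_n$ is a \emph{consequence} of $\Ext^1(P_n,L_m)=0$ for all $m$, which drops out of the diagram chase, not an input. The equivalence with the quantum-group block is then immediate because both categories satisfy the same four axiomatic properties, with no need to compare basic algebras.
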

Note that the unrolled quantum group  $u_q^{H}(\mf{sl}_2)$ of $\g=\mathfrak{sl}_2$ at $2r$-th root of unity has a similar decomposition, namely 
\[
u_q^{H}(\mf{sl}_2)\on{-mod} \cong \bigoplus_{\alpha \in \mathbb C \setminus \mathbb Z \cup r\mathbb Z } \ \mathcal C^{\text{typ}}  \  \oplus \  \bigoplus_{i=0}^{r-2}\  \mathcal C^{\text{atyp}}.
\]

In both cases the principal block is atypical. 
We note that they are also equivalent to the principal block of the restricted category $\mc{O}$ of $\widehat{\mf{sl}_2}$ at the critical level
\cite{AraFie12}.

{We also note that
there is a unique object in an atypical block
$\mathcal C^{\text{atyp}}\subset \sWt{k}$ that lies in  the category $\mathcal{O}$ of $\widehat{\g}$.
For instance, in the principal block the simple affine vertex algebra
$\savoa{k}{\mathfrak{g}}$ is such a unique object.
Therefore,
the connection  between the quantum group and the affine Kac-Moody algebra studied in this article
is quite different from the ones by  Kazhdan and Lusztig \cite{KazLus93,KazLus94}
and Lusztig \cite{Lus80}, see also \cite{AraFie12}.}

\subsection{Representation theory of affine vertex algebras at admissible level}

Our motivation to study this problem is a better understanding of the representation theory of affine vertex algebras beyond integrable levels. 
Let $\g$ be a simple Lie algebra, $h$ the Coxeter number and $h^\vee$ the dual one. Let $\savoa{k}{\mathfrak{g}}$ be the simple affine vertex algebra of $\g$ at level $k$. 
The level $k$ is said to be admissible if $k = - h^\vee + \frac{u}{v}$ for coprime positive integers $u, v$ and 
$u \geq h^\vee$ if $v$ is coprime to the lacing number of $\g$ and $u \geq h$ otherwise. 
We are interested in the category $\sWt{k}$  of finitely generated smooth weight modules that are $\savoa{k}{\mathfrak{g}}$-modules and that have finite dimensional weight spaces. 
This category has a variety of interesting subcategories. Firstly the category of relaxed-highest weight modules, $\sWtpg{k}$, that is modules whose conformal weights are in addition lower bounded; and secondly the category of ordinary modules,
$\sWtKL{k}$,
that is finitely generated lower bounded weight modules with finite dimensional conformal weight spaces.
We recall the state of the art of the understanding of these categories and how our work contributes. 

Firstly, ordinary modules are very well understood. They form a finite semisimple braided tensor category \cite{A12-2, CreBra17}. It is also believed that ordinary modules are rigid, this is currently proven in type ACD and E and some additional special cases \cite{CreFus, CGL-osp, CKL-para}.
In types ACD and E fusion rules coincide with the ones of associated principal $W$-algebras \cite{CreFus, CGL-osp} and similar results also hold for ordinary modules of the affine vertex superalgebra associated to $\mathfrak{osp}_{1|2n}$ \cite{CGL-osp}.

Relaxed-highest weight modules were first studied in the case of $\g = \mathfrak{sl}_2$. Already more than 25 years ago, Adamovic and Milas classified simple modules in $\sWtsl{k}_{\geq0}$ \cite{AdaVer95}.  The general case has been treated much more recently \cite{KawRid21}.
The work of Adamovic and Milas falls into the early days of logarithmic conformal field theory and WZW models at fractional level serve as prototypical examples. The adjective logarithmic refers to the appearance of logarithmic singularities in correlation functions, a feature that follows from non semi-simple Virasoro algebra zero-mode action \cite{CreLog13}. 
A consistent full conformal field theory is constructed out of a representation category of modules of the underlying vertex algebra that is closed under fusion and under modular transformations. Relaxed highest weight modules don't close under fusion as exemplified first by Gaberdiel \cite{GabFus01} and then also by Ridout \cite{RidFus10} and Adamovic \cite{AdaRea17}, but also their images under spectral flow appear. Spectral flow refers to modules twisted by automorphisms associated to translations of the affine Weyl group.  These modules also appear in modular transformations \cite{CreMod12, CreMod13} and the conjectural Verlinde formula for fusion rules. 
The conclusion is that the category of relaxed-highest weight modules is too small and it is better to pass to a larger category, the category $\sWtsl{k}$  of finitely generated smooth weight modules that have finite dimensional weight spaces. 

There are two levels, $k = -1/2$ and $k=-4/3$ for which $\VOA L_k(\SLA{sl}{2})$ allows particular nice free field realizations \cite{Adamovic:2004zi, CreCos13}. In particular the case $k=-1/2$ has been studied extensively by David Ridout \cite{RidSL208, RidSL210, RidFus10}. Our main result has already been obtained recently in these two special cases \cite{CMY5}.

By \cite{AKR23}, we know that every simple module in $\sWt{k}$  is a spectral flow image of a simple module in $\sWtpg{k}$, see Theorem \ref{spec-flow}. Our contribution is then a complete description of $\sWtsl{k}$ and it is exactly as expected \cite[Conjecture 2]{Creutzig:2018ogj}. We proceed as follows:

\subsubsection{Outline of proof} 
${}$

\noindent {\bf Step 1} Extensions in $\sWtpg{k}$. 

Firstly, we need to understand the abelian category of relaxed-highest weight modules. We prove that this problem can be reduced to the top level, that is to the representation theory of $\g$. More precisely we have \Cref{prop:relext}. This result holds for any simple Lie algebra $\g$.
\begin{theorem}[\Cref{prop:relext}]
Let $\g$ be a finite-dimensional simple Lie algebra and $k$ be admissible.
Let $M,N$ be finite-length, almost simple, relaxed-highest weight $\savoa k\g$-modules with the lowest conformal weights $h_M$ and $h_N$.
Then the following hold. 
\begin{enumerate} \item If $h_M= h_N$, then $\Ext^1(M,N)\cong \Ext^1(M_{top},N_{top})$.
\item  If $h_M\neq h_N$, then $\Ext^1(M,N)=0$.
\end{enumerate}\end{theorem}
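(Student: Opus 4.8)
The plan is to reduce the extension problem over $\savoa k\g$ to one over $\g$ by means of the adjunction between the ``top-space'' functor and relaxed induction, after first clearing away a trivial case with a grading argument. Write $\mathcal C:=\sWtpg k$, let $(-)_{\mathrm{top}}$ denote the functor sending a relaxed-highest-weight module to its lowest-conformal-weight space (a module over Zhu's algebra $A(\savoa k\g)$, i.e.\ a $\g$-module on which the image of the Sugawara $L_0$ acts by the conformal weight), and let $\overline{\mathbb M}$ be its left adjoint, relaxed induction, which is generated in its lowest conformal weight by $\overline{\mathbb M}(V)_{\mathrm{top}}\cong V$; thus $\Hom_{\mathcal C}(\overline{\mathbb M}(V),E)\cong\Hom_{\g}(V,E_{\mathrm{top}})$ whenever $E$ has lowest conformal weight matching the one attached to $V$. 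First I would settle the subcase of~(2) with $h_M-h_N\notin\ZZ$: in an extension $\dses NEM$ the conformal weights of $E$ lie in $(h_M+\NN)\cup(h_N+\NN)$, a union of two subsets of $\CC$ in distinct cosets of $\ZZ$, and since every mode of $\savoa k\g$ shifts the conformal weight by an integer, $E$ is the direct sum of its two $\ZZ$-coset parts, one being $N$ and the other mapping isomorphically onto $M$. So henceforth $h_M-h_N\in\ZZ$.

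The step I expect to be the crux is the following structural input, drawn from the representation theory of $\savoa k\g$ at admissible level: \emph{a relaxed Verma module $\overline{\mathbb M}(V)$ built on a simple $\g$-module $V$ occurring as the top of an almost simple $\savoa k\g$-module is irreducible} --- equivalently, its reducibility would force $V$ itself to be reducible --- so that every finite-length almost simple $M$ satisfies $M\cong\overline{\mathbb M}(M_{\mathrm{top}})$, any kernel being a submodule of an irreducible module with trivial top. For $\g=\mathfrak{sl}_2$ this is read off the explicit description of the modules $\mathcal E_{\lambda,\Delta}$, their reducibility points, and the semisimplicity of the relevant part of category $\mathcal O$ (Adamovic--Milas); for general $\g$ it is where the admissibility of $k$ genuinely enters, through the classification of simple relaxed modules \cite{KawRid21} and the singular-vector relations defining $A(\savoa k\g)$ (should genuinely non-irreducible almost simple modules occur, one absorbs the radical by a short finite-length argument over $\g$).

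Granting this, part~(1) is a Yoneda-type computation. For $h_M=h_N=:h$, applying the exact functor $(-)_{\mathrm{top}}$ to $\dses NEM$ yields, in conformal weight $h$, a short exact sequence of $\g$-modules $\dses{N_{\mathrm{top}}}{E_{\mathrm{top}}}{M_{\mathrm{top}}}$ and hence a natural additive map $\Phi\colon\Ext^1_{\mathcal C}(M,N)\to\Ext^1_{\g}(M_{\mathrm{top}},N_{\mathrm{top}})$. For injectivity: if $E_{\mathrm{top}}\cong N_{\mathrm{top}}\oplus M_{\mathrm{top}}$, the section $M_{\mathrm{top}}\hookrightarrow E_{\mathrm{top}}\subset E$ is a $\g$-map, hence (by the adjunction and $M\cong\overline{\mathbb M}(M_{\mathrm{top}})$) extends to $s\colon M\to E$ with $s_{\mathrm{top}}$ this section; then $(\pi\circ s)_{\mathrm{top}}=\Id_{M_{\mathrm{top}}}$ for $\pi\colon E\to M$, and since an endomorphism of $\overline{\mathbb M}(M_{\mathrm{top}})$ restricting to the identity on its generating top space is the identity, $\pi\circ s=\Id_M$ and $\dses NEM$ splits. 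For surjectivity: given $\dses{N_{\mathrm{top}}}{W}{M_{\mathrm{top}}}$, applying the right-exact functor $\overline{\mathbb M}$ gives $\overline{\mathbb M}(N_{\mathrm{top}})\to\overline{\mathbb M}(W)\to\overline{\mathbb M}(M_{\mathrm{top}})\to 0$; the first arrow, being a map out of the irreducible $\overline{\mathbb M}(N_{\mathrm{top}})\cong N$, is zero or injective, and if it were zero then $\overline{\mathbb M}(W)\cong\overline{\mathbb M}(M_{\mathrm{top}})\cong M$, forcing $W\cong\overline{\mathbb M}(W)_{\mathrm{top}}\cong M_{\mathrm{top}}$, contradicting $N_{\mathrm{top}}\neq 0$; so $E:=\overline{\mathbb M}(W)$ is an extension of $M$ by $N$ with $E_{\mathrm{top}}\cong W$, i.e.\ $\Phi([E])=[W]$.

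Finally, part~(2) when $h_M\neq h_N$ and $h_M-h_N\in\ZZ$: by symmetry --- apply the contragredient duality on $\mathcal C$, which fixes conformal weights and, the modules being irreducible, preserves the class in question --- assume $h_M<h_N$. Then in any extension $\dses NEM$ the module $E$ has lowest conformal weight $h_M$ with $E_{\mathrm{top}}\cong M_{\mathrm{top}}$, and the submodule $U(\ag)\cdot E_{\mathrm{top}}$ is a nonzero quotient of the irreducible $\overline{\mathbb M}(M_{\mathrm{top}})\cong M$, hence isomorphic to $M$; it maps onto $M=E/N$ and meets $N$ trivially, since $N$ has lowest conformal weight $h_N>h_M$. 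Therefore $E\cong\bigl(U(\ag)\cdot E_{\mathrm{top}}\bigr)\oplus N\cong M\oplus N$ and $\Ext^1_{\mathcal C}(M,N)=0$. In summary, once the irreducibility of relaxed Verma modules on simple tops at admissible level is in hand, everything else is conformal-grading bookkeeping and formal manipulation of the adjunction $\overline{\mathbb M}\dashv(-)_{\mathrm{top}}$; that structural fact is the part I expect to require real work.
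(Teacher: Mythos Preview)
Your proposal is essentially correct and rests on the same structural fact the paper uses: at admissible level, the relaxed ``Verma'' $\savoa k\g$-module built on a simple top is already irreducible (the paper proves this by reducing the cuspidal case to the highest-weight case via twisted localization, exactly as you sketch). For Part~(2), your argument and the paper's are effectively the same: both reduce via contragredient duality to the case where the quotient has the smaller conformal weight, then use the irreducibility of the induced module on the top to force a splitting.

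The genuine difference is in Part~(1). The paper's injectivity argument (\Cref{prop:same}) is more elementary than yours: given a split top, it takes $C'=\langle C_{top}\rangle\subset B$, shows $A\cap C'=0$ by a short PBW computation using only that $A$ is almost simple, and then concludes $C'\cong C$ by comparing characters. This works for \emph{any} almost simple $A,C$ over $\ag$ and does not use admissibility at all. Your injectivity argument instead invokes $M\cong\overline{\mathbb M}(M_{top})$, so you spend the structural input twice where the paper spends it once. Your adjunction packaging is cleaner, but it makes Part~(1) depend on the admissibility hypothesis, which the paper's argument shows is unnecessary there.

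One imprecision to tighten: you repeatedly use ``$\overline{\mathbb M}(M_{top})\cong M$ is irreducible'' and ``$\overline{\mathbb M}(N_{top})\cong N$ is irreducible'', but the hypothesis is only that $M,N$ are \emph{almost} simple, so $M_{top},N_{top}$ need not be simple (e.g.\ $\cE^\pm_{r,s}$). The parenthetical reduction you allude to---filtering $M_{top}$ by simples and using right-exactness of $\overline{\mathbb M}$ together with the simple case---does go through, but it deserves to be spelled out, since both your surjectivity argument (injectivity of $\overline{\mathbb M}(N_{top})\to\overline{\mathbb M}(W)$) and your Part~(2) argument (the submodule of $E$ generated by $E_{top}$ is exactly $M$) hinge on it.
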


\noindent {\bf Step 2} Ext$^1$ in $\sWtsl{k}$.

Next, we derive various criteria so that $\Ext^1(M, N)=0$.  There is a trivial conformal weight criterion and another one if certain weight spaces vanish, see Theorem \ref{thm:extaaa}. We also need that certain extensions exist. These are obtained from the explicit construction of modules \cite{AdaRea17, Kawrel19}. In particular Adamovic's construction of logarithmic modules is very important. Since a few such extensions aren't covered by Adamovic's construction  we develop a new technique that also covers the missing cases, see appendix \ref{appendix}. We expect that this construction will turn out to be useful in the future when studying categories corresponding to higher rank $\mathfrak g$. 

As a consequence we get the following characterization: Let $L_0$ be a simple module in $\sWtsl{k}$ that is not projective, then the block $\mathcal C$ 
containing $L_0$ has the properties:
\begin{Properties}\label{properties}
 $\mathcal C$ is a category whose simple inequivalent objects are $L_n$ for $n \in \mathbb Z$ and 
\begin{enumerate}
\item 
$
\Ext^1(L_n, L_m) = \begin{cases} \mathbb C & \quad  \text{if} \ |n-m| =1 \\ 0 &\quad \text{else;} \end{cases}
$
\item Let
$ \ses{L_n}{E^\pm_n}{L_{n \pm1}} \ \in  \ \Ext^1( L_{n \pm 1}, L_n)$
{be a non-zero element.}
Then 
$\Ext^1(E^+_{n+1}, L_n) = 0 = \Ext^1(L_n, E^+_{n-2})\ $ and $\ \Ext^1(E^-_{n-1}, L_n) = 0 = \Ext^1(L_n, E^-_{n+2})$;
\item 
$\Ext^1(E^+_{n-1}, L_n) = 0 = \Ext^1(L_n, E^+_n)\ $ and  $\ \Ext^1(E^-_{n+1}, L_n) = 0 = \Ext^1(L_n, E^-_n)$;
\item 
$\Ext^1(E^+_n, E^+_{n+1}) \neq 0 \ \text{and} \ \Ext^1(E^-_n, E^-_{n-1}) \neq 0$.
\end{enumerate}
\end{Properties}
The atypical blocks of the category of weight modules of $u_q^H(\mathfrak{sl}_2)$ have the exact same properties, see section \ref{sec:unrolledsl2}.

\noindent {\bf Step 3}  Study the atypical block.

Finally, we determine consequences of the above properties and it turns out that they completely characterize the atypical block.  For this we look at many long exact sequences in homology and observe that all extensions are completely determined, see Theorem \ref{main}. 
As a summary we have
\begin{theorem} \textup{(Corollary \ref{maincor})}
As an abelian category the atypical block is completely determined by Properties \ref{properties} and in particular equivalent to the atypical block of $u^H_q(\SLA{sl}{2})$.
\end{theorem}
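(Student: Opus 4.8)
The plan is to show that Properties \ref{properties} pin down the abelian category $\mathcal{C}^{\text{atyp}}$ up to equivalence, and then observe that the atypical block of $u_q^H(\SLA{sl}{2})$ is another model of the same data. The strategy is entirely homological and combinatorial: we reconstruct the indecomposable projectives $P_n$ (hence the category of finitely generated modules over $\End(\bigoplus_n P_n)$) purely from the $\Ext^1$-quiver and the relations encoded in Properties (2)--(4). So the first step is to fix notation for the would-be projective cover: since the simples are $L_n$, $n\in\ZZ$, with $\Ext^1(L_n,L_m)=\CC$ iff $|n-m|=1$ and $0$ otherwise, the radical series of $P_n$ has $L_n$ on top and $\rad P_n/\rad^2 P_n \hookrightarrow L_{n-1}\oplus L_{n+1}$. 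I would first argue, using Property (3) (which says $\Ext^1(L_n, E^+_n)=0$ etc.), that both $L_{n-1}$ and $L_{n+1}$ actually occur in $\rad P_n/\rad^2 P_n$, i.e. $P_n$ genuinely surjects onto each $E^\pm$. The precise mechanism is the standard one: $\dim\Hom(P_n, M) = [M : L_n]$, and an $\Ext^1(L_{n\pm1},L_n)\neq 0$ forces a length-two quotient of $P_n$ on which the extension is realized; Property (3) then prevents this from being killed prematurely.

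The heart of the argument is the middle layer of $P_n$. The layout of the diagram asserts $\rad^2 P_n/\rad^3 P_n \cong L_n$ and $\rad^3 P_n = 0$, so $P_n$ has length $4$ with composition factors $L_n, L_{n-1}, L_{n+1}, L_n$. I would establish this by a squeeze: first build the "diamond'' $D_n$ as the pushout/fibre product of $E^+_n \leftarrow L_n \rightarrow E^-_n$ along the top copy — more carefully, one constructs the universal extension $0\to L_{n-1}\oplus L_{n+1}\to R_n\to L_n\to 0$ classified by the identity in $\Ext^1(L_n, L_{n-1})\oplus\Ext^1(L_n,L_{n+1})$, so that $R_n = \rad P_n$. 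Then one computes $\Ext^1(R_n, L_m)$ from the long exact sequence $\Hom(L_{n\pm1},L_m)\to\Ext^1(L_n,L_m)\to\Ext^1(R_n,L_m)\to\Ext^1(L_{n\pm1},L_m)\to\Ext^2(L_n,L_m)$; Properties (2) and (3), applied to the subquotients $E^\pm$ of $R_n$, force $\Ext^1(R_n, L_m)$ to vanish unless $m=n$, and to be (at most) one-dimensional when $m=n$. This shows $\head(\rad P_n)\subseteq \rad^2 P_n$ can only be a single $L_n$, so $P_n$ has length at most $4$; Property (4) (namely $\Ext^1(E^+_n, E^+_{n+1})\neq 0$, equivalently $\Ext^1(E^+_n, E^-_{n+2})$-type statements chaining the diamonds) forces this last $L_n$ to actually be present, giving length exactly $4$ and the stated Loewy diagram. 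Self-duality of $\mathcal{C}^{\text{atyp}}$ (projective = injective for the $P_n$) then follows because the diagram is symmetric under reversing arrows, matching $P_n$ with its dual, so contravariant duality fixes each $P_n$ and permutes nothing.

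Once the $P_n$ are determined, I would finish by the standard reconstruction: $\mathcal{C}^{\text{atyp}}$ is equivalent to the category of finite-dimensional modules over the (basic) algebra $A = \End\bigl(\bigoplus_{n} P_n\bigr)^{\mathrm{op}}$, and $A$ is the completed path algebra of the quiver with vertices $\ZZ$, arrows $a_n: n\to n+1$ and $b_n: n+1\to n$, modulo the relations read off from the diagram: $a_{n-1}a_n = 0$, $b_{n+1}b_n = 0$ (no paths of length $2$ in one direction, since $\rad^3 = 0$ and the middle socle layer is a single $L_n$) and $b_n a_{n-1} = a_n b_{n+1}$ (the two length-two paths $n\to n\pm1\to n$ agree, being the unique map $P_n \to \rad^2 P_{n}\subset P_n$, up to scalar — the scalar can be normalized to $1$ after rescaling the arrows, which is where a short bookkeeping argument is needed). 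The key point is that Properties (1)--(4) supply exactly the dimensions $\dim\Hom(P_n,P_m)$ needed to see no further relations and no missing ones. Finally, I would cite the explicit computation in Section \ref{sec:unrolledsl2} showing the atypical block of $u_q^H(\SLA{sl}{2})$ satisfies the very same Properties \ref{properties}, hence is the module category of the same quiver-with-relations, completing the equivalence.

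\textbf{Main obstacle.} The delicate step is controlling $\Ext^2$, or rather dispensing with it: the long exact sequences above involve $\Ext^2(L_n, L_m)$ terms, and a priori nothing in Properties \ref{properties} bounds them. The honest route is to avoid $\Ext^2$ entirely by working with the finite-length modules $E^\pm_n$ and $R_n$ directly — constructing $P_n$ as an explicit iterated extension and verifying by hand (using only $\Ext^1$ of simples and of the $E^\pm$, all given) that it has no self-extensions and that every module maps to it appropriately, i.e. verifying the defining property of a projective cover from scratch. Making this iterated-extension bookkeeping airtight — in particular showing the top and bottom $L_n$ of $P_n$ are not accidentally identified and that $P_n$ is indecomposable — is the part that will require genuine care rather than formal nonsense.
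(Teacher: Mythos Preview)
Your approach is essentially correct and would succeed, but it differs from the paper's in two notable respects.

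First, the construction of $P_n$. You build it in radical layers: first the length-three module $R_n$ with head $L_n$ and socle $L_{n-1}\oplus L_{n+1}$ (note: this is $P_n/\soc P_n$, not $\rad P_n$ as you wrote), then extend by $L_n$ below. The paper instead defines $P_n$ directly as a nonsplit element of $\Ext^1(E^+_{n-1},E^+_n)$, whose existence is precisely Property~(4). The crucial trick is then to show, via an explicit $\Hom$ computation, that $P_n$ \emph{also} fits into $0\to E^-_n\to P_n\to E^-_{n+1}\to 0$. With both filtrations in hand, the long exact sequences give $\Ext^1(P_n,L_m)=0$ for $m\notin\{n+1,n+2\}$ from the first and for $m\notin\{n-1,n-2\}$ from the second, hence for all $m$; dually $\Ext^1(L_m,P_n)=0$. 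This two-presentation device is exactly what dissolves your ``main obstacle'': no $\Ext^2$ ever enters, and the bookkeeping you flagged as delicate becomes a one-line intersection of index sets.

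Second, the endgame. You finish via Morita theory, presenting the category as modules over the path algebra of the $A_\infty^\infty$ quiver with the zigzag relations $a_{n-1}a_n=0$, $b_{n+1}b_n=0$, $b_na_{n-1}=a_nb_{n+1}$. The paper instead classifies all indecomposables explicitly (the zig-zag modules $\Lambda_n^m$, $V_n^m$ of \S\ref{sec:zigzag}) by iterated $\Ext^1$ computations, and then reads off the projective resolutions and $\Ext^\bullet$ groups. Your route is more conceptual and gets the equivalence faster; the paper's route yields more explicit structural information (the full list of indecomposables and all higher $\Ext$'s) as a byproduct. Both are valid proofs of the corollary, and both conclude by invoking the computation in \S\ref{sec:unrolledsl2} that the atypical block of $u_q^H(\SLA{sl}{2})$ satisfies the same Properties.
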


\subsubsection{Consequences}

One can hope that our proof strategy generalizes well. The first step is already done for all $\g$, and the second one should generalize well, however combinatorics will be more involved. But the idea is to find enough vanishing of $\Ext^1$ together with constructing enough extensions so that one hopefully can proof that these properties already determine the block uniquely. One of the biggest obstacles is the construction of logarithmic modules. At least for $\g = \mathfrak{sl}_3$ there is progress \cite{ACG} and with the method of Appendix \ref{appendix} more extensions should be constructable.

Eventually one would like to have and understand tensor category structure on $\sWtsl{k}$. A priori this looks like an impossible task as any existence theorem only works for ordinary modules. However $\sWtsl{k}$ enjoys Kazama-Suzuki duality \cite{Kazama:1988qp} and in particular there is a block-wise equivalence of categories of $\sWtsl{k}$ and of the $N=2$ super conformal algebra at central charge $3k/(k+2)$ \cite{Feigin:1997ha, CGNR,CLRW}. This means that the simple $N=2$ super conformal algebra at central charge $3k/(k+2)$ for admissible $k$ has the same typical and atypical blocks as $\sWtsl{k}$.  The main result of \cite{CY} establishes existence of vertex tensor category provided that ordinary modules are of finite length and $C_1$-cofinite, see \cite[Theorem 3.3.5]{CY} for the precise statement. We establish finite length and the $C_1$-cofiniteness is proven in \cite{C}. Kazama-Suzuki duality is good enough that tensor category of $\sWtsl{k}$
follows from \cite{CreTen17, CMY}.

Next one might ask if one has actual tensor equivalences \`a la Kazhdan-Lusztig of $\sWtsl{k}$ and some (quasi)-Hopf algebra. There has been quite some recent progress in this direction \cite{CLR1, CLR2} and in particular the technology of \cite{CLR2} hints that there is a connection to a quotient category of $u^H_q(\mathfrak{sl}_{2|1})$ for $q = e^{\pi \sqrt{-1}(k+2)}$, see also \cite{CNS} for an analogue at levels of the form $k = -2 + \frac{1}{p}$ and in particular a proof in the special case of $p=1$.

\subsection{Outline}

We start in section \ref{QG} with a short description of categories of modules of unrolled quantum groups. In section \ref{section:generalitiles} we introduce various categories of modules and explain the state of the art for $\mathfrak{sl}_2$. \Cref{sec:catlb} studies first extensions in the category of lower-bounded modules and then in \Cref{sec:catwt} discusses vanishing of extension criteria on the category of weight modules. These are applied to the case of $\mathfrak{sl}_2$. In the final section \ref{sec:atyp} we then  prove that the previously determined extensions and vanishing extensions completely characterize the atypical blocks of $\sWtsl{k}$.

\subsection*{Acknowledgements} We thank David Ridout and Drazen Adamovic for useful discussions.

\section{Unrolled small quantum groups}\label{QG}

We use \cite{Ru} as reference, see also \cite{MR4194285}.
Let $\g$ be a simple Lie algebra of rank $n$, $\h$ a Cartan subalgebra with corresponding positive simple roots $\alpha_1, \dots, \alpha_n$. Let $( \ \ ,  \ \ )$ be the Killing form normalized such that short roots have norm two. For a simple root $\alpha_i$, set $d_i = (\alpha_i, \alpha_i)/2$ and $a_{i, j} = (\alpha_i, \alpha_j)/d_i$. Let $q$ be a primitive $\ell$-th root of unity for $\ell \in \mathbb Z_{\geq 3}$ and set $q_i := q^{d_i}$. 
Let $P$ be the weight lattice of $\g$ and $Q$ its root lattice. 
\begin{definition}
The unrolled  quantum group $U_q^H(\g)$  of $\g$ associated to a lattice $L$, $Q \subset L \subset P$, is the $\mathbb C$-algebra with generators $K_\gamma, X^\pm_i, H_i$ with $i= 1, \dots, n$ and $\gamma \in L$ and relations
\begin{equation}
\begin{split}
K_0 &= 1, \qquad K_\gamma K_\mu = K_{\gamma +\mu}, \qquad K_\gamma X_i^\pm K_{-\gamma} = q^{\pm (\gamma, \alpha_i)} X^\pm_i \\ 
[H_i, H_j]&= 0, \qquad [H_i, K_\gamma] = 0, \qquad [H_i, X^\pm_j] = a_{ij}X^\pm_j \\
[X^+_i, X^-_j] &= \delta_{i, j} \frac{K_{\alpha_j} - K_{-\alpha_j}}{q_j - q_j^{-1}} \\
&\sum_{k=0}^{1- a_{ij}} (-1)^k \binom{1-a_{ij}}{k}_{q_i} (X^\pm_{i})^kX^\pm_{j} (X^{\pm}_{ i})^{1-a_{ij}-k}=0 \qquad \text{if $ i \not = j$}.
\end{split}
\end{equation} 
\end{definition} 
The unrolled  quantum group
$U_q^H(\g)$ is a Hopf algebra with coproduct
\[
\Delta(K_\gamma) = K_\gamma \otimes K_\gamma,  \quad \Delta(X^+_i) = 1 \otimes X^+_i + X^+_i \otimes K_{\alpha_i},
\quad \Delta(X^-_i) = K_{-\alpha_i} \otimes X^-_i + X^-_i \otimes K_{\alpha_i}, \Delta(H_i) = 1 \otimes H_i + H_i \otimes 1,
\]
counit
\[
\epsilon(K_\gamma) = 1, \qquad \epsilon(X^\pm_i) = 0 = \epsilon(H_i),
\]
and antipode
\[
S(K_\gamma) = K_{-\gamma}, \quad S(X^+_i) = - X_i K_{-\alpha_i}, \quad S(X^-_i) = -K_{\alpha_i}X^i_i, \quad S(H_i) = - H_i.
\]
\begin{definition}
The small (or restricted) unrolled quantum group $u_q^H(\g)$ of $\g$ associated to $L$ is the quotient of $U_q^H(\g)$  by the Hopf ideal generated by $\{ (X^\pm_i)^{r_i}\}_{i = 1}^n$ where $r_i = \frac{r}{\text{gcd}(d_i, r)}$.
\end{definition}
Let $\mathcal C$ be the category of finite dimensional weight modules of $u_q^H(\g)$. Weight refers to a semisimple action of the Cartan subalgebra, such that $K_\gamma = \prod\limits_{i=1}^n q_i^{k_iH_i}$ for $\gamma = \sum\limits_{i=1}^n k_i H_i$. 
$\mathcal C$ has the following properties
\begin{enumerate}
\item Invertible objects are parameterized by coweights of $\g$ \cite[Remark 4.7]{Ru};
\item $\mathcal C$ is graded  by a maximal torus of $\g$  and the generic graded blocks are semisimple \cite[Proposition 4.8]{Ru};
\item $\mathcal C$ is ribbon \cite[Corollary 4.9]{Ru};
\item $\mathcal C$ has trivial M\"uger center \cite[Proposition 4.10]{Ru}
\item $\mathcal C$ is unimodular and projective covers are also injective hulls \cite[Corollary 4.15]{Ru}
\end{enumerate}

The small quantum group $u_q(\g)$  is a subquotient of $u_q^H(\g)$. Namely the quotient of the subalgebra that excludes the $H_i$  by the relations $K_\mu = 1$ for $\mu \in Q \cap \ell P$. 
This means a module of $u_q^H(\g)$ descends to a module of $u_q(\g)$ if and only if $K_\mu$ acts as the identity on it for any $\mu \in Q \cap \ell P$. This property holds in particular for objects in the principal block. Moreover non-isomorphic objects in $u_q^H(\g)$ can become isomorphic as objects for the small quantum group and in particular the principal block of the small quantum group only has finitely many inequivalent simple objects. Unlike the unrolled quantum group, the small quantum group is not braidable \cite{kondo} and a quasi-Hopf modification is needed \cite{Creutzig:2017khq}. This is obtained via uprolling/modularization \cite{Creutzig:2020jxj, Gainutdinov:2018pni,negron}.

\subsection{$u_q^H(\mathfrak{sl}_2)$}\label{sec:unrolledsl2}

In the case of $\g = \mathfrak{sl}_2$ we can be more explicit \cite{MR3320217}. For each complex number $\alpha$ one has a typical module $V_\alpha$. This module is simple and projective unless $\alpha \in \mathbb Z \setminus r\mathbb Z$. 
In the latter atypical case these modules are composed of simple $i+1$-dimensional modules $S_i \otimes \mathbb C_{\ell r}^H$ for $i = 0, \dots, r-2$ and $\ell \in \mathbb Z$.  Namely
\[
\ses{S_{i} \otimes \mathbb C_{\ell r}^H}{V_{r-i-1+\ell r}}{S_{r-i-2}\otimes \mathbb C_{(\ell +1)r}^H} 
\]
and the projective cover and injective hull $P_i \otimes \mathbb C_{\ell r}^H$ of $S_i \otimes \mathbb C_{\ell r}^H$ has Loewy diagram
\begin{center}
\begin{tikzpicture}[scale=1]
\node (top) at (0,2) [] {$S_i \otimes \mathbb C_{\ell r}^H$};
\node (left) at (-2,0) [] {$S_{r-i-2} \otimes \mathbb C_{(\ell-1) r}^H$};
\node (right) at (2,0) [] {$S_{r-i-2} \otimes \mathbb C_{(\ell +1) r}^H$};
\node (bottom) at (0,-2) [] {$S_i \otimes \mathbb C_{\ell r}^H$};
\draw[->, thick] (top) -- (left);
\draw[->, thick] (top) -- (right);
\draw[->, thick] (left) -- (bottom);
\draw[->, thick] (right) -- (bottom);
\node (label) at (0,0) [circle, inner sep=2pt, color=white, fill=black!50!] {$P_i \otimes \mathbb C_{\ell r}^H$};
\end{tikzpicture}
\end{center}
Let $\mathcal C_i$ the block of $S_i \otimes \mathbb C_{0}^H$ and $\mathcal C_\alpha$ the block of $V_\alpha$ for $\alpha \notin \mathbb Z \setminus r\mathbb Z$. Each $\mathcal C_\alpha$ is semisimple with a single simple object. The block decomposition of
$u_q^H(\mathfrak{sl}_2)\on{-mod}$ 
is 
\[
u_q^H(\mathfrak{sl}_2)\on{-mod}
= \bigoplus_{\alpha \in \mathbb C \setminus \mathbb Z \cup r\mathbb Z } \mathcal C_\alpha \  \oplus \  \bigoplus_{i=0}^{r-2} \mathcal C_i.
\] 
We now describe $\mathcal C_i$. 
Set 
\[ 
L_n := \begin{cases} S_i \otimes \mathbb C_{n r}^H & n \ \text{even} \\ S_{r-i-2} \otimes \mathbb C_{n r}^H & n \ \text{odd} \end{cases}
\]
these are the simple objects in $\mathcal C_i$. 
Let $P_n$ be the projective cover of $L_n$. 
We list some properties. 
\begin{enumerate}
\item The projective cover tells us that
\begin{equation}\label{propQG1}
\Ext^1(L_n, L_m) = \begin{cases} \mathbb C & \quad  \text{if} \ |n-m| =1 \\ 0 &\quad \text{else} \end{cases}
\end{equation}
\item Let
\begin{equation}
 \ses{L_n}{E^\pm_n}{L_{n \pm1}} \qquad \in  \ \Ext^1( L_{n \pm 1}, L_n)
\end{equation}
be a non-zero element. 
They satisfy the short exact sequences
\begin{equation}
\label{exttypqg}
\ses{E^\pm_n}{P_n}{E^\pm_{n \mp 1}}
\end{equation}
and from the corresponding long exact sequences in homology together with projectivity and injectivity of the $P_n$ one immediately gets
\begin{equation}\label{propQG2}
\Ext^1(E^+_{n+1}, L_n) = 0 = \Ext^1(L_n, E^+_{n-2}), \qquad \Ext^1(E^-_{n-1}, L_n) = 0 = \Ext^1(L_n, E^-_{n+2})
\end{equation}
\item 
as well as 
\begin{equation}\label{propQG3}
\Ext^1(E^+_{n-1}, L_n) = 0 = \Ext^1(L_n, E^+_n), \qquad \Ext^1(E^-_{n+1}, L_n) = 0 = \Ext^1(L_n, E^-_n)
\end{equation}
\item By \eqref{exttypqg}
\begin{equation}\label{propQG4}
\Ext^1(E^+_n, E^+_{n+1}) \neq 0 \qquad \text{and} \qquad \Ext^1(E^-_n, E^-_{n-1}) \neq 0.
\end{equation}
\end{enumerate}
We will prove later that any two categories satisfying above four properties, i.e. \eqref{propQG1},  \eqref{propQG2},  \eqref{propQG3} and  \eqref{propQG4}, are abelian equivalent. 

\section{Generalities on  affine Lie algebra modules}\label{section:generalitiles}

\subsection{Preliminaries}

Let $\g$ be a finite dimensional simple  Lie superalgebra, or $\g = \mathfrak{gl}_{n|n}$, with invariant, non-degenerate, supersymmetric bilinear form $\kappa$. Let $\h$ be a Cartan subalgebra of dimension $\ell$ and $\Pi$ a corresponding set of positive simple roots. $\kappa$ induces a bilinear form on $\h^*$ and we normalize $\kappa$ such that long roots have norm two if $\g$ is a Lie algebra. In the Lie superalgebra case we choose a simple even subalgebra of $\g$ and normalize $\kappa$ such that the long roots of this subalgebra have norm two. $\h$ and $\h^*$ are identified via $\kappa$ and we denote this by $\nu : \h \rightarrow \h^*,  \ \ h \mapsto \kappa(h,  \, \cdot \, )$. 
The coroot $\alpha^\vee \in \h$ of $\alpha$ is determined by $\kappa( \alpha^\vee, h) = \alpha(h)$ for all $h \in \h$. 
The lattices spanned by coroots and roots are the coroot lattice $Q^\vee$ and the root lattice $Q$. The fundamental weights are in $\h^*$ and they are the natural duals of the simple coroots and the fundamental coweights are in $\h$ and are the natural duals of the simple roots. They span the weight and coweight lattices $P$ and $P^\vee$. 

The affinization of $\g$ is (we write $\g[t, t^{-1}]$ for $\g \otimes_{\C} \C[t, t^{-1}]$)
\[
\ag = \g[t, t^{-1}] \oplus  \C \, K \oplus \C \, d 
\]
with $K$ central and non-vanishing commutation relations
\[
[x \otimes t^{m}, y \otimes t^{n}] = [x, y] \otimes t^{m+n} +  m\, \delta_{m+n, 0}\, K\, \kappa(x, y). \qquad [d, x \otimes t^n] = n x \otimes t^n.
\]
The Cartan subalgebra $\h$ extends to $\ah = \h \oplus  \C \, K \oplus \C \, d$ and $\kappa$ extends to $\ah$ via $\kappa(K, \h) = \kappa(d, \h) = \kappa( K, K) = \kappa(d, d) = 0$ and $\kappa(K, d) = 1$. $\nu$ then extends accordingly to $\ah$. 
Any coweight $\lambda \in P^\vee$ acts on $\h^*$ via affine Weyl translation $t_\lambda$ 
\[
t_\lambda(\alpha) = \alpha + \alpha(K)\nu(\lambda) - \left( \alpha(\lambda) + \frac{1}{2} \kappa(\lambda, \lambda) \alpha(K)\right) \nu(K)
\]
and these induce automorphisms on the affine Lie superalgebra. These are called spectral flow automorphisms $\sigma^\lambda$ and for $\lambda \in P^\vee$ they act as 
\begin{equation}\label{spectralflow}
\begin{split}
\sigma^\lambda(e^\alpha \otimes t^n) &= e^\alpha \otimes t^{n - \alpha(\lambda)}, \qquad e^\alpha \in  \g_\alpha = \{ x \in  \g | [h, x] = \alpha(h)x \ \forall \ \h \in \h \} \\
\sigma^\lambda(h \otimes t^n) &= h \otimes t^n - \delta_{n, 0} \kappa(\lambda, h)K , \qquad h \in  \h \\
\sigma^\lambda(K) &= K \\
\sigma^\lambda(d) &= d + \lambda \otimes t^0 -\frac{1}{2} \kappa(\lambda, \lambda) K
\end{split}
\end{equation}

\subsection{The category of smooth weight modules}

Let $M$ be a $\ag$-module. $M$ is called smooth if for every $m \in M$ and every $x \in \g$ there exists a sufficiently large $n \in \mathbb Z$, such that $(x \otimes t^N).m = 0$ for all $N \geq n$. We can construct a twisted module associated to $\lambda \in P^\vee$. Namely, let $\sigma^\lambda(M)$ be isomorphic to $M$ as a vector space, with isomorphism given by $m \mapsto \sigma^*_\lambda(m)$ and twisted action defined as
\[
X.\sigma^*_\lambda(m) = \sigma^*_\lambda\left( \sigma^{-\lambda}(X).m\right), \qquad X \in U(\ag), \ m \in M.
\]
The spectral flow twist has the two obvious properties
\begin{enumerate}
\item If $M$ is smooth then so is $\sigma^\lambda(M)$.
\item If $ 0 \rightarrow  M_1 \rightarrow M_2 \rightarrow M_3 \rightarrow 0$ is a non-split exact sequence of $\ag$-modules, then the same is true for
$ 0 \rightarrow  \sigma^\lambda(M_1) \rightarrow \sigma^\lambda(M_2) \rightarrow \sigma^\lambda(M_3) \rightarrow 0$.
\end{enumerate}
We call an $\ag$-module $M$ to be of level $k \in\C$ if $K$ acts on $M$ by multiplication with $k$. 
The affine vertex superalgebra $\uavoa k\g$ is generated by fields $x(z)$ for $x\in \g$ with OPEs
\[
x(z)y(w) = \frac{k\kappa(x, y)}{(z-w)^2} + \frac{[x, y](w)}{(z - w)}
\]
and as a $\ag$-module $\uavoa k\g$ is the vacuum Verma module at level $k$
\[
\uavoa k\g \cong  U(\ag) \otimes_{U(\g[t] \oplus \C K \oplus d \C)} \C_k, 
\]
where $\C_k$ is the one-dimensional $\g[t] \oplus \C K \oplus  \C d$-module on which $K$ acts by multiplication with $k$ and $\g[t] \oplus  \C d$ acts as zero.
 We denote the simple quotient of $\uavoa k\g$  by $\savoa{k}{\mathfrak{g}}$.  The Sugawara vector is denoted by $L(z) = \sum_{n \in \mathbb Z}L_n z^{-n-2}$ and generalized eigenvalues of $L_0$ are called conformal weights.

We list some basic types of modules
\begin{enumerate}
\item
A module $M$ over $\g$ is called a {\em weight} module
if $\h$ acts semisimply on $M$:
$$
M=\bigoplus_{\lam\in\h^*}M_\lam.
$$
\item
Let $\gb$ be a Borel subalgebra containing $\h$.
The Verma module and irreducible highest weight module of weight $\lam\in\h^*$ are denoted by
$M(\lam)$ and $L(\lam)$, respectively.
\item
Let $\p$ be a parabolic subalgebra of $\g$ with the nilradical $\gu$ and Levi factor $\gl$.
Let $N$ be an irreducible weight $\gl$-module.
It is trivially a $\p$-module by setting $\gu.N=0$
and therefore, we have the $\g$-module
$$
M_\p(N)=U(\g)\otimes_{U(\p)}N.
$$
Let $L_\p(N)$ be the irreducible quotient of $M_\p(N)$, 
which is called the parabolically induced module induced from $N$.


\item Let $M$ be a $\ag$-module.
\ As in the finite case, $M$ is called a weight module
if $\ah$ acts semisimply on $M$.

\item Let $M$ be a weight module over $\ag$.
A weight vector $v\in M$ is called a {\em relaxed-highest weight vector}
if $(\g[t]t).v=0$.
\item The module $M$ is called {\em relaxed-highest weight} if $M$
is generated by a single relaxed-highest weight vector $v\in M$.

\item Let $N$ be a weight module over $\g$ and $k$ a non-critical complex number.
It is naturally a $\g[t]\oplus \C K$-module by setting
$$
(\g[t]t).v=0,\quad K.v=kv\quad(v\in N).
$$
The {\em relaxed Verma module} generated by $N$ is the $\ag$-module
$$
M_k(N)=U(\ag)\otimes_{U(\g[t]\oplus\C K)}N,
$$
which is a relaxed-highest weight module if $N$ is generated by a
single weight vector.

\item The {\em almost irreducible quotient} of a lower bounded module $M$
is the quotient module $M/I$, where $I$ is the sum of all submodules
which intersect $M_{top}$ trivially.
A lower bounded module $M$ is called {\em almost irreducible}
if $M$ is generated by $M_{top}$ and there are
 no non-zero submodules of $M$
which intersect $M_{top}$ trivially.
Note that an almost simple module $M$ is irreducible if and only if 
$M_{top}$ is irreducible.
The almost irreducible quotient of $M_k(N)$ is denoted by $L_k(N)$,
which is a relaxed-highest weight module if $N$ is generated by a
single weight vector.
The module $L_k(N)$ is irreducible if and only if $N$ is irreducible.

\end{enumerate}

\begin{defin}Let $k$ be a complex number.
We set 
\begin{enumerate}
\item $\bigWt{k}$
 the category of finitely generated smooth weight modules at level $k$ with finite-dimensional weight spaces, that is objects $\mathcal M$ are finitely generated smooth $\widehat{\mathfrak{g}}$-modules 
 at level $k$ such $\mathfrak{h}$ acts semisimply and so $\mathcal M$ is graded by conformal weight and $\mathfrak{h}$, that is 
\[
\mathcal M = \bigoplus_{\lambda, \Delta} \mathcal M_{\lambda, \Delta}
\]
and
$\text{dim}   \mathcal M_{\lambda, \Delta} < \infty$ for any $(\lambda, \Delta)$. 
(Note that by Proposition 3.5 in \cite{AKR23}, if $k\neq0$, for any module $\mathcal M$ in $\bigWt{k}$, we have the following existence of lower bounds of conformal weights: for each $\lambda$ there exists $h_\lambda$, such that $\mc{M}_{\lambda, \Delta} = 0$ for $\text{Re}(\Delta) < \text{Re}(h_\lambda)$.)
\item $\bigWtpg{k}$ the full subcategory
of $\bigWt{k}$ consisting of objects that are 
finitely generated lower bounded weight modules, that is $\mathcal M$ in $\bigWtpg{k}$, if it is in $\bigWt{k}$ and if conformal weight is lower bounded. 

\item $\bigWtO{k}$ the full subcategory
of $\bigWtpg{k}$ consisting of objects that lies in the category $\mc{O}$ of $\g$.

\item $\bigWtKL{k}$ the full subcategory
of $\bigWtO{k}$ consisting of objects  with finite dimensional conformal weight spaces.

\item $\sWt{k}$ the full subcategory of $\bigWt{k}$ consisting of objects that are $\savoa{k}{\mathfrak{g}}$-modules.

\item $\sWtpg{k}=\sWt{k}\cap \bigWtpg{k}$.

\item $\sWtO{k}=\sWt{k}\cap \bigWtO{k}$.

\item $\sWtKL{k}=\sWt{k}\cap \bigWtKL{k}$.

\end{enumerate}
\end{defin}

\begin{remark}\label{rem:centralchar}
Let $\g$ be a simple Lie algebra and $k$ an admissible level, then any module for Zhu's algebra of $\savoa k\g$ admits a central character \cite[Remark 3.5]{Arakawa:2019ear}. Zhu's algebra is a quotient of $U(\g)$ and the top level of a module $M$ in $\sWtpg{k}$ is a module of Zhu's algebra and hence is in particular a $\g$-module that admits a central character. 
\end{remark}

Write $\sigma^*_{\ell}$ and $\sigma^\ell$ for $\sigma^*_{\ell \omega}$ and $\sigma^{\ell\omega}$ and $\omega$ the fundamental weight of $\SLA{sl}{2}$. 
\begin{lemma} 
Let $k\neq -2,0$. Let $\mathcal M$ be a  module in $\VOA V^{k}(\mf{sl}_2)\on{-wtmod}$.  Assume that $\mathcal M = \langle x \rangle$ is generated by a single homogeneous element. Then $\mathcal M$ has a finite filtration 
\[
0 = M_0 \subset M_1 \subset M_2 \subset \dots \subset M_n = \mathcal M, 
\]
such that for each $i = 1, \dots, n$ there exist an integer $m_i$ such that $\sigma^{m_i}(M_i/M_{i+1})$ is in $\VOA V^{k}(\mf{sl}_2)\on{-wtmod}_{\geq 0}$.
\end{lemma}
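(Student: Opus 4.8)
Set $\g=\mf{sl}_2$ and let $\alpha$ be the positive root. The strategy is to peel off, one submodule at a time, a piece that becomes lower bounded after a spectral flow, the iteration terminating because $\mc M$ is finitely generated. Write $(\mu,\Delta)$ for the $\h$-weight and generalised $L_0$-eigenvalue of a homogeneous vector. Since $\mc M=\langle x\rangle$ with $x$ homogeneous, say of type $(\mu_0,\Delta_0)$, every $\h$-weight of $\mc M$ lies in $\mu_0+Q$ and every conformal weight in $\Delta_0+\mathbb Z$; in particular the real parts of conformal weights occurring in $\mc M$ lie in the discrete set $\operatorname{Re}\Delta_0+\mathbb Z$, and by Proposition~3.5 of \cite{AKR23} (this uses $k\neq0$; the Sugawara grading needs $k\neq-2$) they are bounded below on each $\h$-weight space. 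Thus the \emph{lower profile}
\[
\gamma(\mu)=\min\{\operatorname{Re}\Delta \st \mc M_{\mu,\Delta}\neq0\}\ \in\ (\operatorname{Re}\Delta_0+\mathbb Z)\cup\{+\infty\}
\]
is a well-defined function on $\mu_0+Q$, finite precisely on the $\h$-support of $\mc M$. From \eqref{spectralflow}, for $\ell\in\mathbb Z$ the spectral flow $\sigma^{\ell}$ shifts the $\h$-weight of a homogeneous vector by a fixed nonzero multiple of $\ell$ and its conformal weight by a function of the form $a\ell\mu+b\ell^{2}$ with $a\neq0$; hence passing from $\mc M$ to $\sigma^{\ell}(\mc M)$ replaces $\gamma$ by an affine reparametrisation all of whose slopes are shifted by the common amount $-a\ell$.

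\smallskip
\noindent\textbf{Step 1 (reduction).} It is enough to show: whenever $\mc M\neq0$ and $\mc M$ is \emph{not} lower bounded, there are $m\in\mathbb Z$ and a nonzero submodule $M_1\subsetneq\mc M$ with $\sigma^{m}(M_1)\in\VOA V^{k}(\mf{sl}_2)\on{-wtmod}_{\geq0}$, chosen so that a nonnegative-integer complexity $c(\mc M)$ --- the number of distinct asymptotic slopes of the piecewise-affine function agreeing with $\gamma$ off a finite set --- strictly decreases when we pass to $\mc M/M_1$ (again cyclic, with finite-dimensional weight spaces). Indeed, the iteration stops as soon as the quotient is lower bounded ($c=0$), at which point that quotient is its own length-one tail with $m=0$; pulling the submodules back to $\mc M$ yields the asserted finite filtration, each subquotient being finitely generated as a quotient of a cyclic module.

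\smallskip
\noindent\textbf{Step 2 (one peeling step).} The key observation is that any nonzero $w\in\mc M_{\mu,\Delta}$ with $\operatorname{Re}\Delta=\gamma(\mu)$ lying over a \emph{local minimum} of $\gamma$ (i.e.\ $\gamma(\mu)\le\gamma(\mu\pm\alpha)$) is a relaxed-highest-weight vector: for $n\ge1$ and $y\in\g$ the vector $y_{n}w$ has conformal weight of real part $\gamma(\mu)-n$, strictly below $\gamma$ at the $\h$-weight of $y_{n}w$, hence $y_{n}w=0$; such a $w$ generates a lower-bounded submodule since neither $\g$ nor $\g[t^{-1}]t^{-1}$ lowers conformal weight. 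When $\mc M$ is not lower bounded, $\gamma$ is unbounded below, so by the shape lemma (below) $\gamma$ agrees off a finite set with a minimum $\min_i(a\ell_i\mu+b_i)$ of finitely many affine functions with $\ell_i\in\mathbb Z$; taking $\ell$ to be the extreme (say leftmost) such index and applying $\sigma^{\ell}$ turns that face of the profile of $\sigma^{\ell}(\mc M)$ horizontal, producing a plateau extending to $\mu=-\infty$. Any nonzero $w$ deep inside this plateau sits over a local minimum of the new profile, hence is relaxed-highest-weight; one sets $M_1:=\sigma^{-\ell}(N)$, where $N\subseteq\sigma^{\ell}(\mc M)$ is generated by the (finite-length, hence finitely generated --- by the theory of weight $\mf{sl}_2$-modules with a central character, cf.\ Remark~\ref{rem:centralchar}) $\g$-module spanned by such $w$ over the far-left part of the plateau. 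Removing $M_1$ eliminates that asymptotic slope, so $c(\mc M/M_1)<c(\mc M)$.

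\smallskip
\noindent\textbf{Main obstacle (shape lemma).} The substantive point, used twice above, is: \emph{the lower profile $\gamma$ agrees, off a finite subset of $\mu_0+Q$, with the minimum of finitely many affine functions $\mu\mapsto a\ell_i\mu+b_i$ with $\ell_i\in\mathbb Z$}; equivalently, the peeling terminates. I would attack this via the triangular decomposition $\ag=\g[t^{-1}]t^{-1}\oplus(\g\oplus\C K\oplus\C d)\oplus\g[t]t$: writing $\mc M=U(\g[t^{-1}]t^{-1})\,W$ with $W=U(\g)\,U(\g[t]t)\,x$, and noting $U(\g[t^{-1}]t^{-1})$ never lowers conformal weight, one reduces to a shape statement for the cyclic ``positive-mode'' module $W$, whose conformal weights are bounded above by $\operatorname{Re}\Delta_0$. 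Here finite generation and finite-dimensionality of weight spaces are both indispensable --- an infinite direct sum $\bigoplus_{n}\sigma^{n}(L)$ of spectral flows of a lower-bounded module has finite-dimensional weight spaces but violates the conclusion, while failing to be finitely generated and hence not lying in our category; the finiteness can likely also be extracted from the structural analysis of smooth weight modules in \cite{AKR23} underlying their Proposition~3.5 and their identification of simple smooth weight modules as spectral flows of lower-bounded ones. Once this shape/termination statement is in place, Steps 1 and 2 are elementary manipulations with $\gamma$ and with spectral flow; I expect the shape lemma itself to be the crux.
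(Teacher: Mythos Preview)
Your proposal hinges on an unproven ``shape lemma'' that you yourself flag as the crux, so as it stands the argument is incomplete. Beyond the mere absence of a proof, two concrete issues undermine the outline. First, Remark~\ref{rem:centralchar} concerns the \emph{simple} affine vertex algebra at admissible level; here we are in $\VOA V^{k}(\mf{sl}_2)\on{-wtmod}$ at an arbitrary $k\neq-2,0$, so you cannot assume a central character on the top space, and hence your claim that the $\g$-module of profile-minimising vectors is of finite length is unjustified. Second, even granting the shape lemma, you have not explained why peeling off the submodule $M_1$ generated by \emph{some} vectors on the extreme plateau removes that entire asymptotic slope from the profile of the quotient; a priori $M_1$ need not contain every profile-minimising vector in the far-left region, so $c(\mc M/M_1)$ could equal $c(\mc M)$.

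The paper's argument sidesteps the shape lemma entirely by working \emph{locally} in the single weight column through the generator $x$. Fixing $\mu=\text{wt}(x)$ and the minimal conformal weight $h_\mu$ in that column, one analyses the filtration $A_r=\{v\in\mc M_{\mu,h_\mu}\mid e_nv=0\text{ for }n>r\}$ and picks out the critical index $m$ where it first becomes nonzero. An eigenvector $v\in A_m$ of the operator $X=f_{-m}e_m$ (which preserves $A_m$) is then shown, after spectral flow by $m$, to be either relaxed-highest-weight (if the eigenvalue $a\neq0$) or to produce a lowest-weight vector $e_0\sigma^*_m(v)$ (if $a=0$); the latter case is constrained by a single quadratic equation in $m$ and so occurs only finitely often. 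The termination invariant is simply $N=\dim\bigoplus_{\Delta\le h_x}(\mc M/M_i)_{\mu,\Delta}$, which strictly decreases in the first case and is nonincreasing in the second. This bypasses any global control on the profile $\gamma$ and uses nothing about $\savoa{k}{\g}$ or admissibility.
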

\begin{proof}
The generator $x$ is homogeneous meaning it has some weight, say $\mu$ and some (generalized) conformal weight, say $h_x$. Let $N$ be the dimension of 
\[
 \bigoplus_{ \Delta \leq h_x} \mathcal M_{\mu, \Delta}.
\]

We construct $M_1$.

Firstly let us recall equation (2.11) of \cite{CreMod13}, namely that if $v$ has weight $\mu$ and conformal weight $h$, then $\sigma^*_{\ell}(v)$ has weight $\mu + \ell k$ and conformal weight $h + \frac{\ell \mu}{2} + \frac{\ell^2k}{4}$.

Consider the lowest conformal weight $h_\mu$ appearing in 
$\bigoplus_{\Delta}\mathcal M_{\mu,\Delta}$, which implies $h_n v = 0$ for all $n>0$ and all $v \in \mathcal M_{\mu, h_\mu}$. Set $A_r = \{ v\in  \mathcal M_{\mu, h_\mu} | e_n v = 0 \ \quad \text{for all} \ n > r\}$. 
Then we have the chain 
$$
\cdots \supset A_1\supset A_0\supset A_{-1}\supset \cdots
$$ 
of subspaces of $\mathcal M_{\mu,h_\mu}$.
Since $\mathcal M$ is smooth  there exists for every $v \in \mathcal M$ an
 $m \in \mathbb Z$, such that $e_n v = 0$ for all $n >  m$. 
 As in addition $\mathcal M_{\mu,h_\mu}$ is finite-dimensional, the descending chain vanishes at some point and let us consider the number $m\in\ZZ$ which satisfies
 $A_m\neq 0$ and $A_r=0$ for all $r<m$.
Note that every non-zero element $v$ of $A_m$ satisfies $e_mv \neq0$ by the definition of $m$.
Consider the element $X = f_{-m}e_m$ and $v\in A$ and let $n>m$, then using that $h_{n-m}v=0$ it is easy to see that $e_nXv =0$, i.e. the action of $X$ leaves $A$ invariant. We fix $v$ to be an eigenvector of $X$ in $A$, that is $e_nv=0$ for all $n>m$, $e_mv\neq0$ and $Xv$ is a scalar multiple of $v$. 

 Let $w = \sigma^*_m(v)$. Then $\sigma^{-m}(f_0 e_0) = f_{-m}e_m = X$ and so
 $w$ is an eigenvector for $f_0e_0$. Let $a$ be the eigenvalue.
 Also   $e_0 w = e_0 \sigma_m ^*(v) = \sigma_m^*(\sigma^{-m}(e_0) v) =
\sigma_m^*(e_m v) \neq  0$ and $e_n w = e_n \sigma_m ^*(v) = \sigma_m^*(\sigma^{-m}(e_n) v) =
\sigma_m^*(e_{n+m} v) =  0$ for $n>0$. Since $\sigma^{-m}(h_n) = h_n$ for $n\neq 0$ we also have $h_n w = 0$ for $n > 0$
Let $\nu= \mu + mk$ be the $h_0$-eigenvalue of $w$. Since $ \mathcal M_{\mu, h_\mu -n} = 0$ for $n>0$ also  $\sigma^m(\mathcal M)_{\nu, h_w-n} = 0$ for $n>0$ by \eqref{spectralflow}, where $h_w$ is the conformal weight of $w$.

 There are two cases

{\bf Case 1}: $a \neq 0$

We claim that $f_n w = 0$ for all $n>0$ if $f_0e_0 w = a w$ for some $a\neq 0$. Clearly it is enough to show that $f_1 w = 0$. 
We also note that if $f_0 w = 0 $, then also $[f_0, h_1] w = 0$ and hence $f_1 w =0$.

Consider $e_0w$. Firstly, $e_1e_0w = e_0e_1 w = 0$. Next $h_1 e_0 w = [h_1, e_0] w =  0$ since $e_1w =0$. 
Finally $f_1 e_0 w \in \sigma_m^*(\mathcal M)_{\nu, h_\nu-1} = 0$ and so $f_1e_0 w = 0$. 
Assume that $f_0e_0 w = a w$ for some $a\neq 0$. Then $f_1 a w = f_1 f_0 e_0 w = f_0 f_1 e_0 w = 0$, proving our claim. 
It follows that $w$ generates a module in $\VOA V^{k}(\mf{sl}_2)\on{-wtmod}_{\geq 0}$. Then we set $M_1=\langle v\rangle\subset \mathcal M$.
 
{\bf Case 2}: $a = 0$ 
 
 Lastly if $a = 0$, then $e_0w$  is annihilated by $e_n, h_n, f_n$ for $n>0$ and also by $f_0$. In particular $e_0w$  generates a module in $\VOA V^{k}(\mf{sl}_2)\on{-wtmod}_{\geq 0}$. 
 We set $M_1=\langle e_m v\rangle \subset \mathcal M$.
The second case can only happen if $e_0w$ is a lowest-weight vector for $\SLA{sl}{2}$. The weight of $e_0w$ is $\lambda = \mu +2 + m k$ and the conformal weight is 
$h_\mu+ \frac{\mu m}{2}+ \frac{m^2k}{4}$, but the conformal weight of a lowest-weight vector is
$\Delta = \frac{\lambda(\lambda-2)}{4(k+2)}$. Hence case 2 can only happen if 
\[
\frac{\lambda(\lambda-2)}{4(k+2)} =  h_\mu + \frac{\mu m}{2}+ \frac{m^2k}{4}, \qquad \text{with} \qquad \lambda = \mu +2 + m k.
\]

We repeat this procedure inductively for $\mathcal M/M_i$ to construt $M_{i+1}/M_i$. $M_{i+1}$ is then the pullback, $M_{i+1} = \mathcal M \times_{(\mathcal M/M_i)} (M_{i+1}/M_i)$.
Also set $N_i$ to be the dimension of 
\[
 \bigoplus_{\Delta\leq h_x} (\mathcal M/M_i)_{\mu, \Delta}.
\]
This procedure terminates if the image of $x$ in $M_i$ is in the top level of $M_{i, \mu}$. 
If in the construction case 1 happens, then $N_{i+1} < N_i$ and if case 2 happens then $N_{i+1} \leq N_i$. Since case 2 happens at most a finite number of times this procedure terminates after at finitely many iterations. 
\end{proof}

\begin{corollary}\label{spec-flow}
Let $k\neq -2,0$. Let $M$ be a simple module in $\VOA V^{k}(\mf{sl}_2)\on{-wtmod}$, then there exists a $m \in \mathbb Z$, such that $\sigma^*_m(M)$ is in $\VOA V^{k}(\mf{sl}_2)\on{-wtmod}_{\geq 0}$.
\end{corollary}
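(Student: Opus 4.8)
The plan is to derive this directly from the preceding Lemma, using simplicity of $M$ to collapse the filtration it produces. First I would observe that a nonzero simple object $M$ of $\VOA V^{k}(\mf{sl}_2)\on{-wtmod}$ is cyclically generated by a homogeneous vector: by definition every object of this category decomposes as a direct sum of its bihomogeneous components with respect to $\mathfrak{h}$-weight and conformal weight, so $M$ contains a nonzero homogeneous vector $x$, and $\langle x\rangle$ is then a nonzero submodule, hence all of $M$ by simplicity. In particular the hypotheses of the Lemma are satisfied for $M$ (its standing assumptions $k\neq -2,0$ are exactly those of the corollary).

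Second I would apply the Lemma to $M=\langle x\rangle$, obtaining a finite filtration $0=M_0\subset M_1\subset\cdots\subset M_n=M$ and integers $m_1,\dots,m_n$ with $\sigma^{m_i}(M_i/M_{i-1})\in \VOA V^{k}(\mf{sl}_2)\on{-wtmod}_{\geq 0}$ for each $i$. The Lemma's construction exhibits $M_1$ as generated by a nonzero vector, so $M_1$ is a nonzero submodule of $M$; simplicity then forces $M_1=M$, i.e.\ the filtration is just $0\subset M$ with a single nonzero subquotient $M_1/M_0=M$. Therefore $\sigma^{m_1}(M)$ lies in $\VOA V^{k}(\mf{sl}_2)\on{-wtmod}_{\geq 0}$, and we take $m=m_1$, recalling that $\sigma^m$ is precisely the spectral-flow twist implemented on underlying vector spaces by the linear map $\sigma^*_m$, which is the object the statement refers to.

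There is essentially no obstacle left at this stage: the entire content is in the Lemma, whose inductive construction (the two cases analysed there) already produces at each step a cyclic submodule with a lower-bounded spectral-flow twist. The only point one must be careful about is the elementary observation flagged above—that a simple weight module in this category admits a homogeneous cyclic generator—after which the $n=1$ instance of the Lemma is exactly the assertion of the corollary.
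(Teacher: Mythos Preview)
Your proposal is correct and is exactly the intended derivation: the paper places the corollary immediately after the lemma without a separate proof, the implication being precisely what you wrote---pick a nonzero homogeneous generator by simplicity, apply the lemma, and observe that the nonzero $M_1$ constructed there must already equal the simple $M$, so the filtration has length one. There is nothing to add.
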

This result was shown in Section 3 of \cite{AKR23} by another method (see Remark 3.2 and Theorem 3.11 in \cite{AKR23}). Another Corollary is:

 \begin{corollary}\label{cor:FL}
Let $k\neq -2,0$. If $\VOA V^{k}(\mf{sl}_2)\on{-wtmod}_{\geq 0}$ resp. $\VOA L_{k}(\mf{sl}_2)\on{-wtmod}_{\geq 0}$ is of finite length then so is $\VOA V^k(\mf{sl}_2)\on{-wtmod}$ resp. $\VOA L_k(\mf{sl}_2)\on{-wtmod}$.
 \end{corollary}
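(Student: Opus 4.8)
The plan is to derive both assertions directly from the preceding lemma, together with the fact that spectral flow is an exact auto-equivalence of $\VOA V^{k}(\mf{sl}_2)\on{-wtmod}$ that restricts to one of $\sWtsl{k}$. Recall that a category being ``of finite length'' means that each of its objects admits a finite composition series, so the task is to show that an arbitrary object $\mathcal{M}$ of $\VOA V^{k}(\mf{sl}_2)\on{-wtmod}$ has finite length. First I would reduce to the cyclic case. Since $\mathcal{M}$ is finitely generated and graded by $\h$ and $L_0$, I may pick finitely many homogeneous generators $x_1, \dots, x_r$ and filter $\mathcal{M}$ by the submodules $\langle x_1, \dots, x_j \rangle$, $j = 0, \dots, r$; each of these is again an object of the category, and each successive quotient is a cyclic module with a homogeneous generator, hence again an object. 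As composition length adds along short exact sequences, $\mathcal{M}$ has finite length once each of these cyclic subquotients does, so it suffices to bound the length of a cyclic object generated by a single homogeneous vector.

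So let $\mathcal{M} = \langle x \rangle$ with $x$ homogeneous. I would then invoke the preceding lemma to get a finite filtration $0 = M_0 \subset M_1 \subset \cdots \subset M_n = \mathcal{M}$ whose successive quotients $Q_i = M_i / M_{i-1}$ satisfy $\sigma^{m_i}(Q_i) \in \VOA V^{k}(\mf{sl}_2)\on{-wtmod}_{\geq 0}$ for suitable $m_i \in \ZZ$. The point worth recording is that the construction in that lemma produces each step of the filtration as a cyclic module, so each $Q_i$ — and hence each twist $\sigma^{m_i}(Q_i)$ — is finitely generated and is therefore a bona fide object of $\VOA V^{k}(\mf{sl}_2)\on{-wtmod}_{\geq 0}$, to which the finite-length hypothesis applies.

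Next I would use that $\sigma^{m}$ is an additive functor on $\VOA V^{k}(\mf{sl}_2)\on{-wtmod}$ with quasi-inverse $\sigma^{-m}$: it leaves underlying vector spaces and linear maps unchanged and only twists the action, so it is exact, and it preserves smoothness, finite generation, the weight grading, and finite-dimensionality of weight spaces; thus it is an equivalence of abelian categories and in particular preserves composition length. Consequently $Q_i$ has finite length if and only if $\sigma^{m_i}(Q_i)$ does, and the latter holds because $\VOA V^{k}(\mf{sl}_2)\on{-wtmod}_{\geq 0}$ is of finite length by hypothesis; summing over $i$ gives $\operatorname{length}(\mathcal{M}) = \sum_{i=1}^{n} \operatorname{length}(Q_i) < \infty$. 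The assertion for $\sWtsl{k}$ follows by the same argument: $\sWtsl{k}$ is a full subcategory of $\VOA V^{k}(\mf{sl}_2)\on{-wtmod}$ closed under subquotients — being a $\savoa{k}{\mf{sl}_2}$-module amounts to annihilation by the maximal ideal of $\VOA V^{k}(\mf{sl}_2)$, a condition inherited by submodules and quotients — and closed under spectral flow (cf.\ \cite{AKR23}), so every module occurring in the two filtrations is again a $\savoa{k}{\mf{sl}_2}$-module and the hypothesis on $\sWtpgsl{k}$ bounds the length of the twisted subquotients.

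I do not expect a genuine obstacle; the only delicate points are bookkeeping. The first is ensuring that every module produced along the way — the submodules $\langle x_1, \dots, x_j \rangle$, the cyclic subquotients $Q_i$, and their spectral-flow twists — is genuinely an object of the relevant category (finitely generated, smooth, with finite-dimensional weight spaces), since this is exactly what makes the finite-length hypothesis applicable to $\sigma^{m_i}(Q_i)$; the cyclic form of the lemma's filtration is what guarantees this. The second, relevant only to the $\savoa{k}{\mf{sl}_2}$ statement, is that spectral flow sends $\savoa{k}{\mf{sl}_2}$-modules to $\savoa{k}{\mf{sl}_2}$-modules — standard, but worth flagging, as without it the reduction of the $\sWtsl{k}$ claim to the hypothesis on $\sWtpgsl{k}$ would not go through.
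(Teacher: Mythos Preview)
Your proposal is correct and is exactly the argument the paper has in mind: the corollary is stated without proof, immediately after the lemma, as an evident consequence of the filtration produced there together with the exactness of spectral flow. Your write-up simply spells out the intended details --- the reduction to cyclic modules, the cyclicity of each $M_i/M_{i-1}$ in the lemma's construction, and the closure of $\sWtsl{k}$ under subquotients and spectral flow --- none of which the paper makes explicit but all of which are the natural steps.
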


\subsection{$\sWtpgsl{k}$ at admissible level}\label{subsecsl2basics}

Let $\g = \SLA{sl}{2}$. We will soon specialize to admissible level. Remark \ref{rem:centralchar} tells us that a module of Zhu's algebra at admissible level is necessarily a $\SLA{sl}{2}$-module with semisimple action of the Casimir. We thus set $\cD$ to be the category of $\SLA{sl}{2}$-modules that have semisimple action of the chosen Cartan subalgebra and in addition also semisimple action of the Casimir. 

 Let $\omega$ be the fundamental weight and $\lambda \omega$ a weight. The irreducible highest-weight (lowest-weight) representation of this highest-weight (lowest-weight) is denoted by $D^+_{\lambda}$ ($D^-_\lambda$). The dense modules of weight $\lambda\omega$ and Casimir eigenvalue $\Delta$ will be denoted by $R_{\lambda, \Delta}$ for generic $(\lambda, \Delta)$ as they are generically simple. (See \Cref{sec:loc} for the definition of dense modules.) These modules come in a continuous (coherent) family parameterized by $\h/Q^\vee$. At non-generic position there are then two modules $R^\pm_{\lambda, \Delta}$, where the superscript indicates that the dense module has a highest ($+$) or lowest ($-$) weight module as submodule. For example for $\lambda$ not dominant integral and if $\Delta = \Delta_\lambda$ is the Casimir eigenvalue on $D^+_\lambda$, then $R^+_{\lambda, \Delta_\lambda}$ is the non-trivial extension
\[
0 \rightarrow D^+_\lambda \rightarrow R^+_{\lambda, \Delta_\lambda} \rightarrow D^-_{\lambda + 2} \rightarrow 0, \qquad \in \ \Ext^1_\cD(D^-_{\lambda + 2}, D^+_\lambda).
\]
It is easy to see that this is the only non-trivial extension of $D^-_{\lambda + 2}$ by $D^+_\lambda$ in 
$\cD$.
See, section 3.1 of \cite{Kawrel19} for more details on relaxed modules of $\SLA{sl}{2}$. 

The notation for modules of the affine \voa{} used in the literature is $\sldis{\lambda}^\pm:=L_k(D^\pm_\lambda)$ 
and if $D^+_\lambda$ is also of lowest-weight then one also uses $\slirr{\lambda+1}$ for  $\sldis{\lambda}^+$. The modules induced from dense modules are denoted by 
$\slrel{\lambda}{}:=L_k(R_{\lambda, \Delta}) $ and similarly 
we set $\mc{E}^{\pm}_{\lambda,\Delta}=L_k(R^\pm_{\lambda, \Delta})$. 

We specialize to admissible level, that is
\begin{equation}
	k+2 = t = \frac{u}{v}, \qquad u \in \ZZ_{\ge 2}, \quad v \in \ZZ_{\ge 1}, \quad \gcd \set{u,v} = 1.
\end{equation}
and introduce
\begin{equation} \label{eq:DefLambda}
		\lambda_{r,s} = r - 1 - ts.
	\end{equation}
	and 
	\begin{equation} \label{eq:DefDelta}
	\Delta_{r,s} = \frac{(r-ts)^2-1}{4t} = \frac{(vr-us)^2-v^2}{4uv}
\end{equation}
and the short-hand notation 
\[
\sldis{r, s}^\pm := \sldis{\pm \lambda_{r, s}}^\pm, \qquad \slirr{r} := \slirr{\lambda_{r, 0} +1},  \qquad \slindrel{r, s}^\pm:= \slrel{\lambda_{r, s}}{{r, s}}^\pm.
\]
\begin{thm} \label{rhwsimples} \textup{(Adamovi\'c-Milas \cite{AdaVer95}, see also \cite{RidRel15})}
	Let $k = -2 + \frac{u}{v}$ be an admissible level. Then, 
	the simple objects in $\sWtpgsl{k}$ 
	are exhausted, up to isomorphism, by the following list:
	\begin{itemize}
		\item The $\slirr{r}$, for $r = 1,\dots,u-1$;
		\item The $\sldis{r,s}^\pm$, for $r = 1,\dots,u-1$ and $s = 1,\dots,v-1$;
		\item The $\slrel{\lambda}{r,s}$, for $r = 1,\dots,u-1$, $s = 1,\dots,v-1$ and $\lambda \in \alg{h}^*$ with $\lambda \neq \lambda_{r,s}, \lambda_{u-r,v-s} \pmod{\rlat}$.
		\end{itemize}
	Apart from the identifications $\slrel{\lambda}{r,s} = \slrel{\lambda}{u-r,v-s}$ and $\slrel{\lambda}{r,s} = \slrel{\mu}{r,s}$, if $\lambda = \mu \pmod{\rlat}$, the modules in this list are all mutually non-isomorphic.
\end{thm}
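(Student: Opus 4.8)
The plan is to reduce the classification to the representation theory of Zhu's algebra of $\savoa k{\SLA{sl}2}$, and thence to that of $\SLA{sl}2$ itself. First I would invoke the positive-energy form of Zhu's correspondence. If $M$ is a simple object of $\sWtpgsl k$, then $M$ is lower bounded with conformal weights in a single coset of $\ZZ$, so it has a nonzero lowest-conformal-weight space $M_{top}$; since $\SLA{sl}2[t]t$ annihilates $M_{top}$, the latter is a module over Zhu's algebra $A(\savoa k{\SLA{sl}2})$, and it is simple as such because $M$ is simple, so by the standard correspondence for positive-energy modules $M\cong L_k(M_{top})$. Conversely, for any simple $A(\savoa k{\SLA{sl}2})$-module $N$ the module $L_k(N)$ is a nonzero simple $\savoa k{\SLA{sl}2}$-module with top space $N$: it is lower bounded and finitely generated by construction, it is genuinely an $\savoa k{\SLA{sl}2}$-module because $N$ is annihilated by the Zhu image $[\chi]$ of the singular vector generating the maximal submodule of $\uavoa k{\SLA{sl}2}$, and it has finite-dimensional $(\lambda,\Delta)$-spaces because the $\SLA{sl}2$-modules that will arise as $N$ have at most one-dimensional weight spaces while only finitely many PBW monomials in the negative modes contribute at a fixed conformal weight. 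Thus the theorem is equivalent to classifying the simple weight $\SLA{sl}2$-modules annihilated by the defining ideal of $A(\savoa k{\SLA{sl}2})=U(\SLA{sl}2)/\langle[\chi]\rangle$.

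Second I would combine the classical classification of simple weight $\SLA{sl}2$-modules with finite-dimensional weight spaces — the finite-dimensional $L(n)$; the infinite-dimensional highest- and lowest-weight modules $D^+_{\mu}$, $D^-_{\mu}$ with $\mu\notin\ZZ_{\ge 0}$; and the dense modules, which form coherent families $\{R_{\lambda,\Delta}\}$ parameterized by the weight $\lambda$ modulo $\rlat$ and the (automatically scalar) Casimir eigenvalue $\Delta$, simple away from countably many $(\lambda,\Delta)$ — with the explicit form of $[\chi]\in U(\SLA{sl}2)$ at admissible level (Malikov--Feigin--Fuks for the singular vector, Adamovi\'c--Milas / Dong--Li--Mason for its Zhu image). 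Evaluating the relation $[\chi]=0$ against the extremal weight vector of a highest-/lowest-weight module, respectively against any weight vector of a dense module, turns it into a polynomial identity in the weight and the Casimir whose solution set is precisely the Kac--Wakimoto data. The upshot is that $N$ is killed by $\langle[\chi]\rangle$ if and only if $N$ is one of: $L(r-1)$ for $r=1,\dots,u-1$, yielding $L_k(N)=\slirr r$; $D^+_{\lambda_{r,s}}$ or $D^-_{-\lambda_{r,s}}$ for $r=1,\dots,u-1$ and $s=1,\dots,v-1$ (these are infinite-dimensional since $ts\notin\ZZ$), yielding $L_k(N)=\sldis{r,s}^{\pm}$; or a dense module with Casimir eigenvalue $\Delta_{r,s}$ for some such $(r,s)$. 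I expect this matching — verifying that the zero locus of $[\chi]$ is exactly $\{\lambda_{r,s}\}\cup\{\Delta_{r,s}\}$ in these ranges and nothing further — to be the main obstacle, since it is the computational heart of the Adamovi\'c--Milas argument.

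Third I would sort out the dense sector and the coincidences among the listed modules. Because $\Delta_{r,s}$ depends on $(r,s)$ only through $(vr-us)^2$, it is invariant under $(r,s)\mapsto(u-r,v-s)$; since a dense module is determined by its Casimir and its weight class modulo $\rlat$, this yields $\slrel{\lambda}{r,s}=\slrel{\lambda}{u-r,v-s}$ and $\slrel{\lambda}{r,s}=\slrel{\mu}{r,s}$ when $\lambda\equiv\mu\pmod{\rlat}$. The dense module $R_{\lambda,\Delta_{r,s}}$ fails to be simple precisely when $\lambda$ is congruent modulo $\rlat$ to one of the two roots $\lambda_{r,s}$ and $\lambda_{u-r,v-s}=-2-\lambda_{r,s}$ of the Casimir quadratic at the value $\Delta_{r,s}$; the simple subquotients of the resulting reducible modules are then $D^+_{\lambda_{r,s}}$ and $D^-_{-\lambda_{r,s}}$, already on the list as the tops of the $\sldis{r,s}^{\pm}$, so deleting these dense modules leaves exactly the condition $\lambda\neq\lambda_{r,s},\lambda_{u-r,v-s}$ of the statement. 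Finally, mutual non-isomorphism follows because $M_{top}$ is an isomorphism invariant of $M$ and the $\SLA{sl}2$-modules occurring as the top spaces above are pairwise non-isomorphic beyond the identifications just recorded, being separated by dimension, by weight class modulo $\rlat$, by Casimir eigenvalue, and by the presence or absence of an extremal weight. This completes the argument modulo the singular-vector computation; in practice one cites Adamovi\'c--Milas for that computation and Ridout--Wood for the coherent-family bookkeeping, which is the route taken here. (As an aside, since $\savoa k{\SLA{sl}2}$ is quasi-lisse one could alternatively constrain the admissible top spaces via associated-variety/support arguments together with the Kac--Wakimoto character formula, but the Zhu-algebra route is the most direct.)
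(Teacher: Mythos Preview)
Your proposal is correct and follows the original Adamovi\'c--Milas line: reduce via Zhu's correspondence to simple weight $\SLA{sl}2$-modules annihilated by the Zhu image $[\chi]$ of the singular vector, enumerate the simple weight $\SLA{sl}2$-modules directly (finite-dimensional, highest/lowest-weight, dense), and evaluate $[\chi]$ on extremal or generic weight vectors to obtain the Kac--Wakimoto constraints. Your bookkeeping in the dense sector (the $(r,s)\leftrightarrow(u-r,v-s)$ symmetry of $\Delta_{r,s}$ and the exclusion $\lambda\not\equiv\lambda_{r,s},\lambda_{u-r,v-s}$) is also correct.

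The paper does not give its own proof of this statement---it is cited as a known result---but in \S4.1 it sketches a different, structural route designed to work for arbitrary simple $\g$: after the same Zhu reduction, instead of computing $[\chi]$ explicitly one invokes Fernando's theorem (every simple weight module is parabolically induced from a dense module over a Levi of type AC) and Mathieu's classification of cuspidal modules via twisted localization of uniformly bounded highest-weight modules, thereby reducing the problem to the classification of simple \emph{highest-weight} $\savoa{k}{\g}$-modules, which is Arakawa's theorem. For $\SLA{sl}2$ the Fernando step is vacuous and the Mathieu step is just the coherent-family picture you already use, so the two approaches converge; your explicit-$[\chi]$ method is more elementary and self-contained in rank one, while the localization route is what generalizes to higher rank without a tractable singular-vector formula.
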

Therefore, for any $\lam+\rlat\in \h^*/\rlat$, there are only finitely many irreducible modules up to isomorphisms in $\sWtsl{k}_{\geq0}$  having weights in $\lam+\rlat$.
It implies that every block in $\sWtsl{k}_{\geq0}$ contains only finitely many irreducible modules up to isomorphisms.  
Thus, we see by the finite-dimensionality of weight spaces that 
 $\sWtsl{k}_{\geq0}$ is of finite length.
By \Cref{cor:FL}, we have the following corollary.

\begin{corollary}
 $\sWtsl{k}$ is of finite length.
\end{corollary}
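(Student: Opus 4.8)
The plan is to obtain this as an immediate consequence of \Cref{cor:FL}, once finite length of the lower-bounded subcategory is recorded. First I would note that for a non-integral admissible level $k = -2 + u/v$ one has $v \ge 2$, so $k + 2 = u/v \notin \ZZ$; in particular $k \ne -2$ and $k \ne 0$, so the hypotheses of \Cref{cor:FL} are satisfied. (In the integrable case $v = 1$ the category $\sWtsl{k}$ is semisimple with only finitely many simple objects, cf.\ \cite{FreZhu12}, so the claim is trivial there and may be set aside.)

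Second, I would invoke the discussion immediately preceding the statement: the Adamovi\'c--Milas classification (\Cref{rhwsimples}) shows that for every coset $\lambda + \rlat \in \h^*/\rlat$ there are only finitely many isomorphism classes of simple objects of $\sWtpgsl{k}$ having weights in $\lambda + \rlat$. Hence each block of $\sWtpgsl{k}$ contains only finitely many simples, and since every object of $\sWtpgsl{k}$ is finitely generated with finite-dimensional weight spaces (and, being lower bounded by \cite[Prop.~3.5]{AKR23}, supported on a finitely generated sub-lattice of weights), it admits a finite composition series. Thus $\sWtpgsl{k}$ is of finite length.

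Third, I would feed this into \Cref{cor:FL} to conclude that $\sWtsl{k}$ is of finite length. The mechanism behind \Cref{cor:FL}, already established in the filtration Lemma above, is that any cyclic object of $\VOA V^{k}(\mf{sl}_2)\on{-wtmod}$ admits a finite filtration whose graded pieces are spectral-flow twists of lower-bounded modules; since each $\sigma^m$ is an exact auto-equivalence of the category of smooth weight modules, these graded pieces inherit finite length, an induction on the number of generators extends this to all finitely generated modules, and passing to subquotients keeps one inside the category of $\slvoa{k}$-modules, which is where the composition factors live.

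I do not expect a genuine obstacle in the corollary itself: it is a bookkeeping consequence of \Cref{cor:FL} and \Cref{rhwsimples}. If one instead had to prove the underlying Lemma from scratch, the delicate point would be ``Case 2'' of its construction, where the relaxed-highest-weight vector extracted at a given stage is in fact a lowest-weight vector for $\SLA{sl}{2}$ and the relevant dimension count need not strictly decrease; this is controlled by the constraint $\Delta = \lambda(\lambda - 2)/(4(k+2))$ relating the weight and conformal weight of such a vector, which can hold only finitely often along the filtration and therefore forces termination.
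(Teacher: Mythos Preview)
Your proposal is correct and follows essentially the same route as the paper: establish finite length of $\sWtpgsl{k}$ from \Cref{rhwsimples} together with finite-dimensionality of weight spaces, then invoke \Cref{cor:FL}. Your additional remarks (the check that $k\neq -2,0$, the aside on the integrable case, and the recap of the filtration mechanism) are not in the paper's terse argument but are harmless elaborations; the one redundancy is citing \cite[Prop.~3.5]{AKR23} for lower boundedness of objects that are already in $\sWtpgsl{k}$ by definition.
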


There are some identifications
\begin{prop} \label{twistRules} \textup{\cite{CreMod13}}
\begin{equation} \label{eq:SFIdentifications}
	\begin{split}
		\sfmod{}{\slirr{r}} &\cong \sldis{u-r,v-1}^+,\quad \sfmod{-1}{\slirr{r}} \cong \sldis{u-r,v-1}^-,\qquad  r=1,\dots,u-1, \\		\sfmod{-1}{\sldis{r,s}^+} &\cong \sldis{u-r,v-1-s}^-,
	\qquad\qquad\qquad\qquad\qquad\ \ \,
		r=1,\dots,u-1,\quad s=1,\dots,v-2. \\
	\end{split}
\end{equation}
\end{prop}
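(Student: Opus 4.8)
\smallskip
\noindent\emph{Proof sketch / plan.}
The plan is to read both sides off from the action of $\affine{\mf{sl}_2}$ on the subspace of lowest conformal weight. Spectral flow $\sfmod{\ell}{-}$, $\ell\in\ZZ$, is an exact auto-equivalence of the category of finitely generated smooth weight $\affine{\mf{sl}_2}$-modules of level $k$ — exactness and invertibility are the two properties listed after \eqref{spectralflow}, and it visibly preserves semisimplicity of the $\ah$-action — so it sends simple modules to simple modules. Both $\slirr{r}$ and $\sldis{r,s}^{+}$ are simple as $\affine{\mf{sl}_2}$-modules, so in each of the three cases it suffices to produce inside the relevant twist a single \emph{affine highest-} (resp.\ \emph{lowest-})\emph{weight vector}, i.e.\ a weight vector killed by $\g[t]t$ and by $e_0$ (resp.\ $f_0$), and to compute its $h_0$-eigenvalue and conformal weight: a simple module generated by such a vector is the simple highest- (resp.\ lowest-)weight module $L(\widehat\lambda)$ with those invariants, and by the very definitions of $\slirr r,\sldis{r,s}^{\pm}$ this $L(\widehat\lambda)$ is one of the modules in \Cref{rhwsimples}, singled out by the $h_0$-eigenvalue and conformal weight through \eqref{eq:DefLambda}--\eqref{eq:DefDelta}. (In particular, membership in $\sWtsl{k}$ comes out a posteriori.)

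For $\sfmod{}{\slirr{r}}$ I would take $v^{-}:=f_{0}^{\,r-1}v^{+}$, the lowest-weight vector of the $r$-dimensional top space $D^{+}_{r-1}$ of $\slirr{r}$: it is killed by $\g[t]t$ and by $f_0$, with $h_0$-eigenvalue $-(r-1)$ and conformal weight $\Delta_{r,0}$. Substituting \eqref{spectralflow} into $X.\sigma^{*}(m)=\sigma^{*}(\sigma^{-1}(X).m)$ gives $e_n\,\sigma^{*}(v^{-})=\sigma^{*}(e_{n+1}v^{-})=0$ for $n\ge 0$, $h_n\,\sigma^{*}(v^{-})=\sigma^{*}(h_nv^{-})=0$ and $f_n\,\sigma^{*}(v^{-})=\sigma^{*}(f_{n-1}v^{-})=0$ for $n\ge1$; thus $\sigma^{*}(v^{-})$ is an affine highest-weight vector. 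By the transformation rule for weights and conformal weights under spectral flow recalled above (eq.~(2.11) of \cite{CreMod13}) its $h_0$-eigenvalue is $-(r-1)+k=\lambda_{u-r,v-1}$ and its conformal weight is $\Delta_{r,0}-\tfrac{r-1}{2}+\tfrac{k}{4}=\Delta_{u-r,v-1}$ (a one-line identity using $k=\tfrac uv-2$ and $tv=u$). Since $\sfmod{}{\slirr{r}}$ is simple it is generated by $\sigma^{*}(v^{-})$, hence equals $L_k(D^{+}_{\lambda_{u-r,v-1}})=\sldis{u-r,v-1}^{+}$.

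The remaining two cases go through identically, now applying $\sfmod{-1}{-}$ and starting from a \emph{highest}-weight vector. For $\sfmod{-1}{\slirr{r}}$ one takes $v^{+}$, the highest-weight vector of $D^{+}_{r-1}$; \eqref{spectralflow} shows $\sigma^{*}_{-1}(v^{+})$ is killed by $\g[t]t$ and by $f_0$, i.e.\ is an affine \emph{lowest}-weight vector, with $h_0$-eigenvalue $(r-1)-k=-\lambda_{u-r,v-1}$ and conformal weight $\Delta_{u-r,v-1}$, so $\sfmod{-1}{\slirr{r}}\cong L_k(D^{-}_{-\lambda_{u-r,v-1}})=\sldis{u-r,v-1}^{-}$. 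For $\sfmod{-1}{\sldis{r,s}^{+}}$ one applies $\sfmod{-1}{-}$ to the highest-weight vector of the top $D^{+}_{\lambda_{r,s}}$ of $\sldis{r,s}^{+}$; its image is again an affine lowest-weight vector, and its $h_0$-eigenvalue $\lambda_{r,s}-k$ and conformal weight $\Delta_{r,s}-\tfrac{\lambda_{r,s}}{2}+\tfrac{k}{4}$ simplify, by a short calculation with \eqref{eq:DefLambda}--\eqref{eq:DefDelta}, to $-\lambda_{u-r,v-1-s}$ and $\Delta_{u-r,v-1-s}$, whence $\sfmod{-1}{\sldis{r,s}^{+}}\cong\sldis{u-r,v-1-s}^{-}$.

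I expect there is no serious obstacle here — this is a known statement (\cite{CreMod13}) that the above re-derives — and the only place where care is genuinely needed is fixing the conventions in \eqref{spectralflow}, i.e.\ which mode shifts occur and with which sign, and then not mis-signing the quadratic shift $\tfrac{\ell\mu}{2}+\tfrac{\ell^{2}k}{4}$ in the conformal-weight computation. It may also be reassuring to note in passing that all three twists are automatically lower bounded — because $\slirr{r}$ has a finite-dimensional top and $\sldis{r,s}^{+}$ a top whose weights are unbounded in only one direction, so the quadratic $\Delta$-shift outweighs the spreading of the weight multiplicities — though for the argument above one only uses simplicity and the finite-level computation with the top space.
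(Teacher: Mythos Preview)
Your argument is correct. The paper does not actually prove this proposition: it is stated with a bare citation to \cite{CreMod13} and no proof. What you have written is the standard direct verification --- spectral flow preserves simplicity, so it suffices to locate an affine highest- or lowest-weight vector in the twist and read off its $h_0$-eigenvalue and conformal weight --- and your computations with \eqref{spectralflow} and \eqref{eq:DefLambda}--\eqref{eq:DefDelta} check out. This is precisely the kind of argument underlying the cited result, so there is nothing to compare; you have simply supplied the omitted details.
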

Finally the following extensions are known
\begin{prop} \label{existelle}
\cite{AdaRea17, Kawrel19}
There exist indecomposable modules $\sfmod{\ell}{\slindrel{r,s}^+}$ fitting in the non-split exact sequences
	\begin{equation} \label{es:DED}
	\begin{split}
		\dses{\sfmod{\ell}{\sldis{r,s}^+}}{\sfmod{\ell}{\slindrel{r,s}^+}}{\sfmod{\ell}{\sldis{u-r,v-s}^-}}, \\
		\dses{\sfmod{\ell}{\sldis{r,s}^-}}{\sfmod{\ell}{\slindrel{r,s}^-}}{\sfmod{\ell}{\sldis{u-r,v-s}^+}},
		\end{split}
	\end{equation}
for $\ell \in\ZZ, r = 1, \dots,  u-1, s = 1, \dots, v-1$. 	
\end{prop}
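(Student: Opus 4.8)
\noindent\textit{Proof strategy.} The plan is to treat the ``$+$'' family at $\ell=0$ in detail, deduce the ``$-$'' family by conjugation, and obtain all $\ell\in\ZZ$ by spectral flow. The one ingredient I would import from the literature --- and the step I expect to be the main obstacle in any self-contained argument --- is that $\slindrel{r,s}^\pm:=L_k(R^\pm_{\lambda_{r,s},\Delta_{r,s}})$ is a genuine module over the \emph{simple} affine vertex algebra $\savoa k{\SLA{sl}{2}}$ (equivalently, that the non-semisimple dense module $R^\pm_{\lambda_{r,s},\Delta_{r,s}}$ is a module over Zhu's algebra $A(\savoa k{\SLA{sl}{2}})$): this is what the free-field realisation of \cite{AdaRea17} and the coherent-family/Zhu-algebra analysis of \cite{Kawrel19} provide. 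Granting it, $\slindrel{r,s}^\pm$ lies in $\sWtpgsl{k}$ and hence has finite length by \Cref{rhwsimples} and the corollary following it. First I would record the top-level data: $R^+_{\lambda_{r,s},\Delta_{r,s}}$ is the non-split $\SLA{sl}{2}$-module $\ses{D^+_{\lambda_{r,s}}}{R^+_{\lambda_{r,s},\Delta_{r,s}}}{D^-_{\lambda_{r,s}+2}}$, with $\lambda_{r,s}+2=-\lambda_{u-r,v-s}$ (using $tv=u$) and with $D^+_{\lambda_{r,s}}$ and $D^-_{\lambda_{r,s}+2}$ carrying the common conformal weight $\Delta_{r,s}$. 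Since relaxed induction $M_k(\blank)$ is exact it carries the top-level sequence to $\dses{M_k(D^+_{\lambda_{r,s}})}{M_k(R^+_{\lambda_{r,s},\Delta_{r,s}})}{M_k(D^-_{\lambda_{r,s}+2})}$; passing to the almost-irreducible quotient and noting that a submodule meeting $M_{top}$ trivially has image meeting the top level trivially, the image of $M_k(D^+_{\lambda_{r,s}})$ in $\slindrel{r,s}^+$ is a nonzero quotient of the simple module $L_k(D^+_{\lambda_{r,s}})=\sldis{r,s}^+$, hence equals $\sldis{r,s}^+$, and $\slindrel{r,s}^+/\sldis{r,s}^+$ is generated by its top level $D^-_{\lambda_{r,s}+2}$ and surjects onto $\sldis{u-r,v-s}^-$.

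The crux is that this surjection is an isomorphism, i.e.\ that $\slindrel{r,s}^+$ has length $2$, which I would extract from the socle. Every nonzero submodule of $\slindrel{r,s}^+$ meets $R^+_{\lambda_{r,s},\Delta_{r,s}}$ nontrivially, and the only $\SLA{sl}{2}$-submodules of the latter are $0\subsetneq D^+_{\lambda_{r,s}}\subsetneq R^+_{\lambda_{r,s},\Delta_{r,s}}$; hence $\soc(\slindrel{r,s}^+)$ is simple. It cannot be all of $\slindrel{r,s}^+$, for $\slindrel{r,s}^+$ is generated by its top level, so simplicity would force (by Zhu) the top level to be a simple $A(V^k(\SLA{sl}{2}))=U(\SLA{sl}{2})$-module, which $R^+_{\lambda_{r,s},\Delta_{r,s}}$ is not. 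Therefore $\soc(\slindrel{r,s}^+)\cap R^+_{\lambda_{r,s},\Delta_{r,s}}=D^+_{\lambda_{r,s}}$ and $\soc(\slindrel{r,s}^+)=\sldis{r,s}^+$. Now if $\slindrel{r,s}^+/\sldis{r,s}^+$ were not simple, its socle would be a simple module $S$ meeting its top level $D^-_{\lambda_{r,s}+2}$ trivially, hence with lowest conformal weight strictly greater than $\Delta_{r,s}$; the preimage of $S$ in $\slindrel{r,s}^+$ would then be a non-split extension $\ses{\sldis{r,s}^+}{\widetilde S}{S}$ (non-split because its socle $\sldis{r,s}^+$ is simple) of simple --- hence finite-length, almost simple, relaxed-highest weight --- $\savoa k{\SLA{sl}{2}}$-modules with distinct lowest conformal weights, contradicting \Cref{prop:relext}(2). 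Hence $\slindrel{r,s}^+/\sldis{r,s}^+\cong\sldis{u-r,v-s}^-$. This extension is non-split since the $\Delta_{r,s}$-conformal-weight subspace of $\slindrel{r,s}^+$ is $R^+_{\lambda_{r,s},\Delta_{r,s}}$, which is not $\SLA{sl}{2}$-semisimple, whereas that of $\sldis{r,s}^+\oplus\sldis{u-r,v-s}^-$ is $D^+_{\lambda_{r,s}}\oplus D^-_{\lambda_{r,s}+2}$; being of length $2$ with a non-split sequence, $\slindrel{r,s}^+$ is indecomposable.

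Finally I would obtain the ``$-$'' family by applying the conjugation (Chevalley) automorphism $\conjaut$ of $\widehat{\SLA{sl}{2}}$, under which $\sldis{\lambda}^+\mapsto\sldis{-\lambda}^-$ and $R^+\mapsto R^-$, converting the sequence above into $\dses{\sldis{r,s}^-}{\slindrel{r,s}^-}{\sldis{u-r,v-s}^+}$ (alternatively one simply repeats the preceding argument verbatim for $R^-$). Then applying the spectral flow twist $\sfmod{\ell}{\blank}$ --- which is exact, sends smooth $\savoa k{\SLA{sl}{2}}$-modules to smooth $\savoa k{\SLA{sl}{2}}$-modules, and carries non-split sequences to non-split sequences --- yields the indecomposable modules $\sfmod{\ell}{\slindrel{r,s}^\pm}$ together with the asserted non-split exact sequences \eqref{es:DED} for all $\ell\in\ZZ$, $r=1,\dots,u-1$, $s=1,\dots,v-1$. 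I expect the only genuinely nontrivial input to be the existence of $\slindrel{r,s}^\pm$ as modules over $\savoa k{\SLA{sl}{2}}$ (rather than merely over $V^k(\SLA{sl}{2})$); everything else is bookkeeping with exactness, simplicity, Zhu's correspondence, and \Cref{prop:relext}.
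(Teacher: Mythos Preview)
The paper does not actually prove this proposition: it is stated as a citation of \cite{AdaRea17,Kawrel19}, with only the remark that the ``$-$'' sequences follow from the ``$+$'' ones by conjugation. Your route is therefore genuinely different and considerably more internal: you build $\slindrel{r,s}^+$ as the almost-irreducible quotient $L_k(R^+_{\lambda_{r,s},\Delta_{r,s}})$, pin down its socle via almost-simplicity and the submodule lattice of $R^+$, and then use \Cref{prop:relext}(2) to rule out any extra composition factor below the top level. This is a clean argument and it works; it has the advantage of making transparent that the only external input is the $A(\savoa k{\SLA{sl}{2}})$-module status of $R^\pm_{\lambda_{r,s},\Delta_{r,s}}$, whereas the cited references establish the whole short exact sequence (indeed the characters) by explicit free-field or coherent-family methods.

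Two small points. First, you invoke \Cref{prop:relext}, which in the paper appears \emph{after} the present proposition; there is no circularity (the proof of \Cref{prop:relext} does not use \Cref{existelle}), but if you were inserting this argument into the paper you would need to reorder. Second, the sentence ``the image of $M_k(D^+_{\lambda_{r,s}})$ in $\slindrel{r,s}^+$ is a nonzero quotient of the simple module $L_k(D^+_{\lambda_{r,s}})$'' is not quite how the logic runs: a priori that image is only a quotient of $M_k(D^+_{\lambda_{r,s}})$ by a submodule missing $D^+$. The correct justification is the one implicit in your later socle discussion --- the image is a highest-weight $\savoa k{\SLA{sl}{2}}$-module of highest weight $\lambda_{r,s}$, hence simple by the semisimplicity of $\sWtOsl{k}$ at admissible level, hence equal to $\sldis{r,s}^+$. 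With that adjustment the argument is complete.
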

Note that these extensions are related via conjugation as the conjugate module of $\sfmod{}{\sldis{r,s}^+}$ is $\sfmod{}{\sldis{r,s}^-}$ 
and in fact only the first type of extension was constructed in \cite{AdaRea17, Kawrel19} as the other one follows by conjugation. The same applies in the next proposition. 
\begin{prop}\label{prop:log} \cite{AdaRea17}  and Appendix \ref{appendix}.
There exist indecomposable modules $\sfmod{\ell}{\slproj{r,s}^+}$ fitting in the non-split exact sequences
\begin{align*}
&\dses {\sfmod{\ell}{\slindrel{r,s}^+}} {\sfmod{\ell}{\slproj{r, s}^+}}    {\sfmod{\ell+1}{\slindrel{r,s+1}^+}},\\
&\dses {\sfmod{\ell}{\slindrel{r,v-1}^+}} {\sfmod{\ell}{\slproj{r, v-1}^+}}    {\sfmod{\ell+2}{\slindrel{u-r,1}^+}},\\
&\dses {\sfmod{\ell}{\slindrel{r,s}^-}} {\sfmod{\ell}{\slproj{r, s}^-}}    {\sfmod{\ell-1}{\slindrel{r,s+1}^-}} ,\\
&\dses {\sfmod{\ell}{\slindrel{r,v-1}^-}} {\sfmod{\ell}{\slproj{r, v-1}^-}}    {\sfmod{\ell-2}{\slindrel{u-r,1}^-}} ,\\
\end{align*}
for $\ell \in\ZZ, r = 1, \dots,  u-1, s = 1, \dots, v-2$. 	
\end{prop}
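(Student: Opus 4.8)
The plan is to reduce to the $\ell=0$, ``$+$'' sequences and then to import Adamovi\'c's construction for $s\le v-2$, treating the boundary value $s=v-1$ by the direct construction of Appendix \ref{appendix}. Since the spectral-flow functor $\sfmod{\ell}{\blank}$ is exact and invertible and carries non-split short exact sequences to non-split ones (the two properties recorded after \eqref{spectralflow}), it suffices to produce $\savoa{k}{\SLA{sl}{2}}$-modules $\slproj{r,s}^+$ and $\slproj{r,v-1}^+$ fitting in
\begin{gather*}
\dses{\slindrel{r,s}^+}{\slproj{r,s}^+}{\sfmod{1}{\slindrel{r,s+1}^+}}\quad(1\le s\le v-2),\\
\dses{\slindrel{r,v-1}^+}{\slproj{r,v-1}^+}{\sfmod{2}{\slindrel{u-r,1}^+}};
\end{gather*}
applying $\sfmod{\ell}{\blank}$ then recovers the first and second sequences of the proposition for all $\ell\in\ZZ$. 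Since conjugation $\conjaut$ is an exact autoequivalence with $\conjmod{\sldis{r,s}^+}\cong\sldis{r,s}^-$, hence $\conjmod{\slindrel{r,s}^+}\cong\slindrel{r,s}^-$, and with $\conjaut\circ\sfmod{m}{\blank}\cong\sfmod{-m}{\blank}\circ\conjaut$, applying $\conjaut$ to the two ``$+$'' families yields exactly the third and fourth sequences, the sign changes $\ell+1\mapsto\ell-1$ and $\ell+2\mapsto\ell-2$ being absorbed by this last identity (as in the discussion following \Cref{existelle}). So only the two ``$+$'' constructions above are needed.

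For $1\le s\le v-2$ the modules $\slproj{r,s}^+$ are the logarithmic $\savoa{k}{\SLA{sl}{2}}$-modules of Adamovi\'c \cite{AdaRea17}. One uses a free-field-type realisation of $\savoa{k}{\SLA{sl}{2}}$ in terms of a half-lattice (Heisenberg-type) vertex algebra and a Virasoro minimal model, and pulls back a logarithmic module built on the Heisenberg factor, where a $\log z$ term in a Heisenberg field forces a rank-two Jordan block for $L_0$ and the lattice grading produces the mixing of the two adjacent spectral-flow sectors appearing in the sequence. From the realisation one reads off that the composition factors of $\slproj{r,s}^+$ are those of $\slindrel{r,s}^+$ together with those of $\sfmod{1}{\slindrel{r,s+1}^+}$, that $\slproj{r,s}^+$ is indecomposable, and that the sequence is non-split, the last two being immediate from $L_0$ being non-semisimple.

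For $s=v-1$ this no longer applies: the quotient $\sfmod{2}{\slindrel{u-r,1}^+}$ sits two spectral-flow sectors away from the submodule and, by \Cref{twistRules}, involves the honest (highest/lowest)-weight constituents $\sfmod{\pm1}{\slirr{r}}$, so it is not reached by pulling back the logarithmic modules used above. In Appendix \ref{appendix} we instead build $\slproj{r,v-1}^+$ directly: one deforms the $U(\ag)$-action on the graded vector space $\slindrel{r,v-1}^+\oplus\sfmod{2}{\slindrel{u-r,1}^+}$ by an off-diagonal operator coupling the two summands through a rank-two Jordan block of the Casimir on the graded piece where they overlap --- equivalently, one realises $\slproj{r,v-1}^+$ as a subquotient of an induced $\ag$-module built from a length-two $\SLA{sl}{2}$-datum $N$ on which the Casimir acts non-semisimply and whose semisimplification matches the relevant graded data of the two modules. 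The Sugawara formula then forces $L_0$ to inherit a rank-two Jordan block linking a constituent of the submodule to one of the quotient, so the sequence is non-split and $\slproj{r,v-1}^+$ indecomposable; and since $\sWtsl{k}$ is of finite length (as recorded above), comparing the character of $\slproj{r,v-1}^+$ with the sum of the characters of $\slindrel{r,v-1}^+$ and $\sfmod{2}{\slindrel{u-r,1}^+}$ --- using the classification \Cref{rhwsimples} and the identifications of \Cref{twistRules} and \Cref{existelle} --- rules out extra constituents.

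I expect the boundary case to be the main obstacle. There one must, first, check that the hand-built object really is a $\savoa{k}{\SLA{sl}{2}}$-module, i.e.\ that the maximal submodule of $\uavoa{k}{\SLA{sl}{2}}$ annihilates it, which we reduce to a condition on the top-level datum $N$ analysed via Zhu's algebra (cf.\ \Cref{rem:centralchar} and the admissibility of $k$); and, second, pin down the composition factors \emph{exactly} rather than just bound them --- here finiteness of length together with the rigidity of the simple list in \Cref{rhwsimples} and the spectral-flow identifications is precisely what makes the character count conclusive. By contrast, the cases $s\le v-2$ are a transcription of \cite{AdaRea17}, and the reductions in the first paragraph are formal.
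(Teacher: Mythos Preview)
Your reductions via spectral flow and conjugation are correct and match the paper's spirit, and invoking \cite{AdaRea17} for $1\le s\le v-2$ is fine. The problem is the boundary case $s=v-1$: what you describe is not what Appendix~\ref{appendix} does, and your sketch is not a construction.

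You propose to ``deform the $U(\ag)$-action by an off-diagonal operator coupling the summands through a rank-two Jordan block of the Casimir'', or equivalently to induce from an $\SLA{sl}{2}$-datum $N$ on which the Casimir is non-semisimple. But by \Cref{rem:centralchar} the Casimir acts by a scalar on every module of $Z(\savoa{k}{\SLA{sl}{2}})$, so any lower-bounded $\savoa{k}{\SLA{sl}{2}}$-module has semisimple Casimir on its top. Hence your datum $N$ is never a Zhu-algebra module for $\savoa{k}{\SLA{sl}{2}}$, and the Zhu-induction verification you allude to cannot go through; your object would at best be a $\uavoa{k}{\SLA{sl}{2}}$-module. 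Moreover, the desired $\slproj{r,v-1}^+$ is not lower bounded (the two $\slindrel{}^+$-pieces differ by spectral flow $\sfmod{2}{\blank}$), so there is no single ``top level'' on which to hang $N$ in the first place. The phrase ``deforms by an off-diagonal operator'' remains a wish, not a definition.

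The paper's construction is entirely different and in fact \emph{uniform} in $s$: it does not use the Casimir at all but rather the Heisenberg zero mode $J_0$ of the embedded $\hvoa\subset\savoa{k}{\SLA{sl}{2}}$. The key external input (\Cref{assum}(4), verified via \cite[Cor.~5.3]{C}) is the existence of ``doubled'' modules $E_B^{(2)},F_C^{(2)}$ on which $\pi=\ee^{-2\pi ih}\ee^{2\pi iJ_0}-\Id$ is nilpotent of order~$2$. From suitable length-three subquotients $G_B,G_C$ one passes to contragredient duals, takes the kernel $K_A$ of the surjection $G_B'\oplus G_C'\to A'$ given by $\pi'$, dualises back to $K_A'$, and finally defines $P_A$ as the kernel of $\pi$ on $K_A'$. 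The semisimplicity of $J_0$ on $P_A$ and the Loewy shape are then read off directly; no character comparison or case analysis on $s$ is needed. If you want to reconstruct the proof, the crucial step you are missing is supplying the modules $E_B^{(2)},F_C^{(2)}$ with non-semisimple $J_0$-action --- this is where the real work is hidden, and it is imported from \cite{C} rather than built from an $\SLA{sl}{2}$-level Casimir trick.
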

All extensions except for the $\sfmod{\ell}{\slproj{r, v-1}^\pm}$ were constructed by Adamovi\'c in \cite{AdaRea17} . We give a uniform and new construction of all extensions in the Appendix. 

\section{Category $\sWtsl{k}_{\geq0}$ of lower bounded modules}
\label{sec:catlb}

In this Section, we describe the category $\sWtpg k$ of lower bounded modules over $\savoa k \g$, where $\g=sl_2$ and $k$ is an admissible level.

\subsection{The irreducible modules}\label{sec:loc}

The classification of irreducible modules in $\sWtsl{k}_{\geq0}$ is
obtained in \cite{AdaVer95} and generalized to arbitrary higher-rank finite-dimensional simple Lie algebra $\g$ in \cite{KawRid21}. We have presented the classification in \Cref{rhwsimples}
in the last Section.
In this Subsection, we describe the idea of the proof of (the higher rank generalization of)  \Cref{rhwsimples}.

Let $\g$ be a finite-dimensional simple Lie algebra.
Let  $V$ be either
$\uavoa k \g$ or $\savoa k \g$ with $\cR=\bigWtpg k$ or $\cR=\sWtpg k$, respectively.
We denote the Zhu algebra of $V$ by $Z(V)$.
For a bounded $V$-module $M$, let $M_{top}$ be  the space of the lowest conformal weight (the top space), which
 is a $Z(V)$-module.

\begin{theorem}\cite{ZhuMod96}
The map $M\mapsto M_{top}$ gives a one-to-one correspondence between the irreducible modules in $\cR$ and the irreducible weight modules of $Z(V)$ with finite-dimensional weight spaces.
\end{theorem}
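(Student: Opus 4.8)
The plan is to establish the two mutually inverse assignments $M\mapsto M_{top}$ and $W\mapsto L(W)$ following Zhu's method \cite{ZhuMod96}, the only new point being that the finiteness hypothesis on graded pieces is replaced by the weight-space finiteness built into $\cR$. First I would recall Zhu's functor: for homogeneous $a\in V$ the zero mode $o(a)=a_{(\mathrm{wt}\,a-1)}$ preserves conformal weight, hence acts on $M_{top}$, and $a\mapsto o(a)$ descends to a $Z(V)$-action. When $M\in\cR$, the element $h\in\g$ viewed in $V$ has zero mode the Cartan action $h_{(0)}$; since $V$ is generated by $\h$-weight vectors and $M$ is $\h$-semisimple, $M_{top}$ is a weight module over $Z(V)$, and its weight spaces are weight spaces of $M$, hence finite-dimensional. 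This makes $M\mapsto M_{top}$ well defined into irreducible weight $Z(V)$-modules with finite-dimensional weight spaces, once we know (see below) that $M_{top}$ is irreducible over $Z(V)$ when $M$ is.

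For the reverse direction, to an irreducible weight $Z(V)$-module $W$ with finite-dimensional weight spaces I would attach the generalized Verma module $M(W)$ in the sense of Zhu: the universal $\NN$-graded $V$-module whose degree-zero part is $W$. Concretely, for $V=\uavoa k\g$ this is the relaxed Verma module $M_k(W)$ of \Cref{section:generalitiles} (with $W$ made a $\g[t]\oplus\CC K$-module as there), and for $V=\savoa k\g$ it is the maximal graded quotient of $M_k(W)$ that is a $\savoa k\g$-module; since $W$ is already a $Z(V)$-module the quotient does not change the top. One then checks, using the \pbw{} theorem, that $M(W)_{top}=W$; that $M(W)$ is a weight module of level $k$ (the negative modes of $\g[t,t^{-1}]$ shift $\h$-weights by roots and $K$ acts by $k$), lower bounded with bottom $\Delta_{top}$ fixed by the Casimir scalar on $W$; that it is finitely generated (the irreducible $W$ is generated by one vector, which then generates $M(W)$); and that each weight space $M(W)_{\mu,\Delta}$ is finite-dimensional, since only finitely many \pbw{} monomials in the negative modes have a given conformal degree $\Delta-\Delta_{top}$, and each carries a single finite-dimensional weight space of $W$ into weight $\mu$. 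Hence $M(W)\in\cR$, and I set $L(W)=M(W)/J$ with $J$ the sum of all graded submodules meeting $M(W)_{top}$ trivially; as usual $J$ is the unique maximal such submodule and $L(W)\in\cR$ as well.

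\textbf{The key lemma} is that for any $M\in\cR$ and any $Z(V)$-submodule $W\subseteq M_{top}$, the $V$-submodule $\langle W\rangle$ it generates satisfies $\langle W\rangle_{top}=W$. This is the one genuinely technical point, and I expect it, together with the finite-dimensionality count above, to be the main obstacle; it is proved exactly as in \cite{ZhuMod96} by using the Borcherds/associativity identities to rewrite an arbitrary iterated product of modes acting on $W$, when projected to the lowest conformal degree, as a combination of zero-mode operators $o(a)$ acting on $W$, so that $\langle W\rangle_{top}\subseteq Z(V)\cdot W=W$; the reverse inclusion is trivial.

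Granting the lemma, the two maps are mutually inverse. If $M\in\cR$ is irreducible and $0\neq W\subseteq M_{top}$ is a $Z(V)$-submodule, then $\langle W\rangle$ is a nonzero submodule of $M$, so $\langle W\rangle=M$ and $W=\langle W\rangle_{top}=M_{top}$; thus $M_{top}$ is irreducible over $Z(V)$. Conversely, if $W$ is an irreducible weight $Z(V)$-module with finite-dimensional weight spaces, then $L(W)$ is irreducible: a nonzero submodule of $L(W)$ meeting the top nontrivially must, by irreducibility of $W$, contain all of $W$ and hence equal $L(W)$, while one meeting the top trivially lifts into $J$ and is therefore zero; and $L(W)_{top}=W$ by construction. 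Finally, an irreducible $M\in\cR$ is generated by any nonzero vector of $M_{top}$, hence by $M_{top}$, so the universal property of $M(M_{top})$ gives a surjection $M(M_{top})\twoheadrightarrow M$; since $L(M_{top})$ is the unique irreducible quotient of $M(M_{top})$, we get $M\cong L(M_{top})$. Therefore $M\mapsto M_{top}$ and $W\mapsto L(W)$ are inverse bijections on isomorphism classes, which is the assertion.
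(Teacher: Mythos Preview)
The paper does not give a proof of this theorem at all; it is stated as a known result with a citation to \cite{ZhuMod96} and then used. Your proposal correctly reconstructs the standard Zhu argument, adapted so that the input and output categories carry the weight-module and finite-dimensional-weight-space conditions of $\cR$, and the details you supply (the zero-mode construction, the generalized Verma module $M(W)$, the key lemma $\langle W\rangle_{top}=W$, and the mutual-inverse verification) are the expected ones. In short, there is nothing to compare: your proof is essentially the proof behind the citation, and it is sound.
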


Thus, the irreducible modules in $\cR$ are irreducible relaxed-highest weight modules induced by
the irreducible weight modules over $Z(V)$.

We now concentrate on the case $V=\uavoa k\g$. 
In this case, the classification of irreducible modules in $\cR=\bigWtpg{k}$
follows from the classification of irreducible weight modules of $\g$, which is established in \cite{Fer,MatCla00}.

We first recall Fernando's result.
Let $\p$ be a parabolic subalgebra of $\g$ with the nilradical $\gu$ and Levi factor $\gl$.
The Levi factor $\gl$ is called of {\em AC type} if the derived
algebra $[\gl,\gl]$ is a direct sum of simple Lie algebras of type A or C.

\begin{definition}
An irreducible weight module $M$ over $\g$ is {\em dense} if there is $\lam\in \h^*$ such that the weight support of $M$ fulfills the whole coset $\lam+Q$, that is, $M=\bigoplus\limits_{\mu\in Q}M_{\lam+\mu}$ and $M_{\lam+\mu}\neq 0$ for any $\mu\in Q$.
\end{definition}

The dense modules over semisimple Lie algebras are defined in a similar way.
For a reductive Lie algebra $\gl$, an irreducible $\gl$-module $N$ is called dense if it is dense
as a $[\gl,\gl]$-module.
Note that highest weight modules are never dense.

\begin{theorem}\cite{Fer}
Let $M$ be an irreducible weight $\g$-module.
Then there is a parabolic subalgebra $\p$ of $\g$ with
the Levi factor $\gl$ of AC type and a dense irreducible $\gl$-module $N$
such that $M\cong L_\p(N)$.
That is, any irreducible weight module is parabolically induced
by a dense module over a Levi subalgebra $\gl$ of AC type.
\end{theorem}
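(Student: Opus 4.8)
The plan is to prove Fernando's classification by induction on $\dim\g$. The engine is a dichotomy for the action of the root vectors, which produces a parabolic subalgebra; the base case rests on a separate classical input identifying the admissible Levi types.

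\emph{Step 1 (dichotomy).} For a root $\alpha$ put $M^{(\alpha)}=\{v\in M : e_\alpha^{\,n}v=0\text{ for some }n\}$. Since $\ad e_\alpha$ is nilpotent on $\g$, the binomial identity $e_\alpha^{\,n}xv=\sum_k\binom{n}{k}(\ad e_\alpha)^k(x)\,e_\alpha^{\,n-k}v$ shows $M^{(\alpha)}$ is a $\g$-submodule, hence, by irreducibility, equals $0$ or $M$; so each $e_\alpha$ acts on $M$ \emph{either} locally nilpotently \emph{or} injectively. Let $F\subseteq\Delta$ be the set of roots acting locally nilpotently; a further bracket argument shows $F$ is closed under addition of roots.

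\emph{Step 2 (parabolic and inductive step).} If $F=\emptyset$ then $M$ is \emph{cuspidal}: every root vector is injective, so the support of $M$ is a full coset of the root lattice and $M$ is already dense over $\gl=\g$. Otherwise, Fernando's structure theorem extracts from $F$ a proper parabolic $\p=\gl\oplus\gu$ with $\Delta(\gu)\subseteq F$ and $\gu\neq0$ --- the analysis of the closed set $F$ and of the sub-root-system $F\cap(-F)$, while somewhat intricate, is elementary. Since $\gu$ is nilpotent and acts locally nilpotently on $M$, an Engel-type induction on $\dim\gu$ (peel off a central element of $\gu$ at each step, whose kernel in $M$ is $\gu$-stable) gives $M^\gu\neq0$; by PBW one has $M=U(\bar\gu)\,M^\gu$, and with irreducibility of $M$ this identifies $M^\gu$ with an irreducible weight $\gl$-module $N$ and yields $M\cong L_\p(N)$. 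As $\gl$ is reductive with $\dim[\gl,\gl]<\dim\g$, the inductive hypothesis applies to each simple factor of $[\gl,\gl]$: $N\cong L_{\p'}(N')$ with $\p'=\gl'\oplus\gu'$ a parabolic of $\gl$, $\gl'$ of AC type and $N'$ dense over $\gl'$. By transitivity of parabolic induction --- the preimage $\p''$ of $\p'$ in $\p$ is a parabolic of $\g$ with Levi $\gl'$ --- one gets $M\cong L_{\p''}(N')$, completing the step.

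\emph{Step 3 (base case: cuspidal forces AC type).} What remains, and what I expect to be the real obstacle, is the classical fact --- not formal root combinatorics --- that a finite-dimensional simple Lie algebra admitting a cuspidal module must be of type $A$ or $C$. This is proved separately: by completing the cuspidal module to a coherent family of finite degree and deriving a numerical contradiction in every type other than $A$ and $C$, or by root-string and Serre-relation estimates in the rank-two sub-root-systems (Fernando; Britten--Lemire; Mathieu). Granting it, the induction closes, since in the cuspidal case $\g$ is then of AC type and $M$ is dense, while the non-cuspidal case was handled in Step 2. The remaining checks --- closedness of $F$, the extraction of $\p$ from $F$, the Engel step, and the identification $M\cong L_\p(N)$ --- are routine, if lengthy, once this scaffolding is in place.
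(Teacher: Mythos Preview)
The paper does not supply its own proof of this statement: it is quoted as Fernando's theorem \cite{Fer} and used as a black box in the surrounding discussion of the classification of irreducible weight modules. There is therefore nothing in the paper to compare your argument against.

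For what it is worth, your outline is a faithful sketch of Fernando's original strategy --- the local-nilpotence/injectivity dichotomy for root vectors, the extraction of a parabolic from the closed set $F$, the Engel-type argument that $M^{\gu}\neq 0$, the identification $M\cong L_\p(M^{\gu})$, and the reduction by induction to the separate hard input that cuspidal weight modules exist only in types $A$ and $C$. The one place where you are a bit glib is the claim that ``Fernando's structure theorem extracts from $F$ a proper parabolic'': showing that the closed set $F$ (together with the symmetric part $F\cap(-F)$) actually yields a parabolic subset of roots is genuine work in Fernando's paper and is where most of the root-system combinatorics lives, so in a full write-up that step would need more than a pointer.
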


In particular, dense modules are the same as the {\em cuspidal} modules, the modules which are not properly parabolically induced.

Moreover, in \cite{MatCla00}, the cuspidal modules are classified.
Let $\g$ be of type $A_\ell$ or $C_\ell$.
The cuspidal modules are obtained by applying the {\em twisted localization} functor
to {\em uniformly bounded} irreducible highest weight modules.
Here, a weight module $M$ is called {\em uniformly bounded}
if the weight multiplicities $\dim M_\lam$ are bounded above when $\lam$ runs over $\h^*$.

To introduce the twisted localization functor, we prepare several notations.
Let us fix Cartan and Borel subalgebras $\h\subset \gb\subset \g$.
Let $\alpha_1,\ldots,\alpha_\ell$ and $\omega_1,\ldots,\omega_\ell$ be the simple roots
and fundamental weights.
For each root $\alpha$, we take root vectors $e^\alpha$ of root $\alpha$ and $f^\alpha$ of root $-\alpha$.
Let $S$ be the multiplicative subset of $U(\g)$ generated by $f^{\beta_1},\ldots,f^{\beta_\ell}$
with linearly independent positive roots $\beta_1,\ldots,\beta_\ell$ such that $f^{\beta_1},\ldots,f^{\beta_\ell}$ 
commute with each other.
(It is known that there are such roots, see \cite{MatCla00}).
Then we have the {\em Ore localization} $S^{-1}U(\g)$ of $U(\g)$ (see \cite{MatCla00} for more details.)
Let $M$ be a uniformly bounded weight module on which the elements of $S$
act injectively.
Then the localization $S^{-1}M$ over $S^{-1}U(\g)$ is non-zero and contains $M$ as a submodule.
Note that if $M$ is a dense irreducible module, then the elements of $S$ act bijectively on $M$
and $S^{-1}M=M$.
For any $\mu\in\h^*$, the following functor $\bm{f}^\mu$, called the twisted localization functor, shifts the weight support of
$S^{-1}M$ by $-\mu$:
$$
\bm{f}^\mu(M)=\{\bm{f}^\mu(v)\,|\,v\in S^{-1}M\}
$$ 
is a $\g$-module with the underlying vector space isomorphic to $S^{-1}M$ and 
the action
$$
a.\bm{f}^\mu(v)=\bm{f}^\mu((\bm{f}^{-\mu}a \bm{f}^\mu.v)\quad (a\in\g, v\in S^{-1}M), 
$$
where the twisting $\bm{f}^{-\mu} a \bm{f}^\mu\in S^{-1}U(\g)$ is defined through the exponential formula:
$$
\bm{f}^{-\mu}a \bm{f}^\mu=\sum_{i_1,\ldots,i_\ell\leq 0}\binom{-n_1}{i_1}\cdots
\binom{-n_\ell}{i_\ell}\ad(f^{\beta_1})^{i_1}\cdots \ad(f^{\beta_\ell})^{i_\ell}(a)(f^{\beta_1})^{-i_1}
\cdots (f^{\beta_\ell})^{-i_\ell},
$$
where $n_1,\ldots,n_\ell\in\C$ are defined by
$\mu=n_1\beta_1+\cdots n_\ell \beta_\ell$.
Here, $\ad(a)(b):=[a,b]$ for $a,b\in U(\g)$.
Note that if $v\in S^{-1}M$ is a weight vector of weight $\nu\in\h^*$, then
the element $\bm{f}^\mu(v)\in \bm{f}^\mu(M)$ is that of weight $\nu-\mu$.
We have $\bm{f}^\mu(M)\cong \bm{f}^\nu(M)$ if and only if  $\mu-\nu\in Q$.
So, we often write $\bm{f}^{\mu+Q}(M)=\bm{f}^\mu$
for $\mu+Q\in \h^*/Q$.

Now we explain the result of \cite{MatCla00}.
Let $\lam$ be an element of $\h^*$ of the form
$$
\lam=k_1\omega_1+\cdots k_\ell \omega_\ell,\quad k_1,\ldots,k_\ell\not\in \ZZ_{\geq0}.
$$
and suppose that $L(\lam)$ is uniformly bounded.
Then we have a certain subset $\Sing(\lam)\subset \h^*/Q$, which is a union of finite codimension one subsets of
$\h^*/Q$, and it follows that $\bm{f}^\mu(L(\lam))$ is cuspidal if and only if $\mu\not \in \Sing(\lam)$.
Moreover, all cuspidal modules are obtained in this way.

Thus, by combining \cite{Fer,MatCla00,ZhuMod96}, we have the classification of irreducible modules in $\bigWtpg{k}$.

We now briefly explain about the classification of irreducible modules in $\sWtpg{k}$ \cite{KawRid21},
where $k$ is an admissible level.
As we can expect from the above, it reduces to the classification of irreducible highest weight modules of $\savoa{k}{\g}$, which was solved in \cite{A12-2}.
See \cite{KawRid21} for more details.

\subsection{The indecomposable modules in $\sWtpg k$}
In this Subsection, we classify the indecomposable modules in $\sWtpg k$, where $k$ is an admissible level and $\g = \SLA{sl}{2}$.
The result is as follows.
Recall the modules  $\slindrel{r,s}^\pm$  in $\sWtpg k$ from
\Cref{subsecsl2basics}.

\begin{theorem}\label{thm:van_ext}
Let $\g = \SLA{sl}{2}$. The indecomposable but not irreducible modules in $\sWtpg k$
are isomorphic to $\slindrel{r,s}^\pm$ with $r=1,\ldots,u-1$ and $s=1,\ldots,v-1$.
\end{theorem}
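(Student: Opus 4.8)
The plan is to reduce every $\Ext^1$ between the relevant modules to the top level by means of \Cref{prop:relext}, and to combine this with the structure of the category $\cD$ of weight $\SLA{sl}{2}$-modules with semisimple Casimir action recalled in \Cref{subsecsl2basics}.

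\emph{Step 1: the $\Ext^1$-quiver.} Since $\sWtpg k$ is of finite length it suffices to locate all pairs of simple objects with non-zero $\Ext^1$. Every simple of \Cref{rhwsimples}, and every $\slindrel{r,s}^{\pm}$ of \Cref{existelle}, is finite-length, almost simple and relaxed-highest weight, so \Cref{prop:relext} gives $\Ext^1(M,N)=0$ unless $M,N$ have the same lowest conformal weight, in which case $\Ext^1(M,N)\cong\Ext^1_\cD(M_{top},N_{top})$. Using that $t=u/v$ is non-integral --- so $\lambda_{r,s}=r-1-ts\notin\ZZ$ and $v\nmid s$ for $1\leq s\leq v-1$ --- one checks that the conformal weights of the $\slirr r$ are distinct from one another and from those of the other simples, and (by complete reducibility of finite-dimensional $\SLA{sl}{2}$-modules) that they have no self-extension, so the $\slirr r$ are isolated vertices; a short computation in $\cD$ likewise shows the $\slrel{\lambda}{r,s}$ to be isolated (their weight cosets contain no singular weight, although their central character is sub-generic). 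For the remaining simples one uses that weight modules supported on different cosets of $\rlat$ have no non-split extension, together with the fact recalled in \Cref{subsecsl2basics} that $R^{+}_{\lambda,\Delta_\lambda}$ is the \emph{unique} non-trivial extension of $D^{-}_{\lambda+2}$ by $D^{+}_\lambda$ (and its conjugate), to conclude that the $\sldis{r,s}^{\pm}$ fall into two-simple blocks $\{\sldis{r,s}^{+},\sldis{u-r,v-s}^{-}\}$, $r=1,\dots,u-1$, $s=1,\dots,v-1$, with $\Ext^1$ one-dimensional in each direction and vanishing self-extensions. In particular, any non-split extension of one simple by another is unique up to isomorphism, so the indecomposable modules of length two are exactly the $\slindrel{r,s}^{\pm}$ of \Cref{existelle}, and these are pairwise non-isomorphic.

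\emph{Step 2: the $\slindrel{r,s}^{\pm}$ are injective (the main point).} An isolated-vertex simple has vanishing self-$\Ext^1$ and is linked to nothing, hence --- $\sWtpg k$ being of finite length --- is injective and projective and forms a semisimple block. For a two-simple block the crux is that $\slindrel{r,s}^{+}$, and symmetrically $\slindrel{u-r,v-s}^{-}$, is injective in $\sWtpg k$. Being finite-length, almost simple and relaxed-highest weight, \Cref{prop:relext} yields $\Ext^1(X,\slindrel{r,s}^{+})\cong\Ext^1_\cD(X_{top},R^{+}_{\lambda_{r,s},\Delta_{r,s}})$ when the conformal weights agree, and $0$ otherwise, so it suffices to show $R^{+}_{\lambda_{r,s},\Delta_{r,s}}$ is injective in $\cD$. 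This is classical $\SLA{sl}{2}$-representation theory carried out with the Casimir-semisimplicity constraint: the block of $\cD$ with that (sub-generic) central character and that weight coset has exactly the two simples $D^{+}_{\lambda_{r,s}}$ and $D^{-}_{\lambda_{r,s}+2}$, and is a self-injective Nakayama category of Kupisch series $(2,2)$ whose two indecomposable projective-injective objects are precisely $R^{+}_{\lambda_{r,s},\Delta_{r,s}}$ and $R^{-}_{\lambda_{r,s},\Delta_{r,s}}$; hence $\Ext^1_\cD$ with either of them in either slot vanishes. I expect this identification of the relevant block of $\cD$ --- in particular verifying that the two singular weights are $\lambda_{r,s}$ and $\lambda_{r,s}+2$ and that modules outside the block contribute nothing --- to be the main technical obstacle. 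Granting it, $\Ext^1(X,\slindrel{r,s}^{\pm})=0$ for every simple $X$, hence for every $X\in\sWtpg k$ by finite length, so each $\slindrel{r,s}^{\pm}$ is injective (and, dually, projective).

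\emph{Step 3: conclusion.} An indecomposable module $M$ lies in a single block. If that block is an isolated vertex, it is semisimple and $M$ is simple. Otherwise $M$ lies in a two-simple block $\{\sldis{r,s}^{+},\sldis{u-r,v-s}^{-}\}$; by Step 2 the injective hulls of its two simple objects are $\slindrel{r,s}^{+}$ and $\slindrel{u-r,v-s}^{-}$, both of length two. Since in a finite-length category every module has essential socle, $M$ embeds into the injective hull of its socle, which is a finite direct sum of copies of $\slindrel{r,s}^{+}$ and $\slindrel{u-r,v-s}^{-}$; such a sum has Loewy length two, so $\rad^{2}M=0$. An indecomposable module with radical square zero whose composition factors lie in this block --- where the only non-zero $\Ext^1$ between simples are the one-dimensional $\Ext^1(\sldis{r,s}^{+},\sldis{u-r,v-s}^{-})$ and $\Ext^1(\sldis{u-r,v-s}^{-},\sldis{r,s}^{+})$ --- is either one of these two simples or one of the two length-two modules $\slindrel{r,s}^{+}$, $\slindrel{u-r,v-s}^{-}$. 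Hence the indecomposable but not irreducible objects of $\sWtpg k$ are exactly the $\slindrel{r,s}^{\pm}$ with $r=1,\dots,u-1$, $s=1,\dots,v-1$.
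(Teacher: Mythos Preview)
Your approach is correct and essentially the same as the paper's: reduce to the top level via \Cref{prop:relext}, then exploit the Casimir-semisimplicity constraint (from \Cref{rem:centralchar}) on Zhu-algebra modules. The paper organises this as a direct verification of the $\Ext^1$-formulas \eqref{eqn:rel1}--\eqref{eqn:rel5}, while you package the same content as ``compute the $\Ext^1$-quiver, then show $\slindrel{r,s}^{\pm}$ are injective''; both routes arrive at the same Nakayama $(2,2)$ picture for the atypical blocks.

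There is, however, a gap you acknowledge but mis-locate. You grant the identification of the block of $\cD$ as self-injective Nakayama of Kupisch series $(2,2)$ and flag ``verifying that the two singular weights are $\lambda_{r,s}$ and $\lambda_{r,s}+2$ and that modules outside the block contribute nothing'' as the main obstacle. In fact that block identification is routine (since $\lambda_{r,s}\notin\ZZ$, the weight coset $\lambda_{r,s}+2\ZZ$ contains exactly the two singular weights and no dense simple). The genuine content you are skipping is the injectivity of $R^{\pm}$ in $\cD$, that is, $\Ext^1_\cD(D^+_{\lambda_{r,s}},R^+_{\lambda_{r,s},\Delta_{r,s}})=0$ (and its variants). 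This is where the Casimir constraint does the work: in any non-split extension $0\to R^+\to M\to D^+\to 0$, a lift $v\in M_\lambda$ of the highest-weight vector of the quotient $D^+$ satisfies $ev\ne 0$ (otherwise $\langle v\rangle\cong D^+$ would split off), and since $f$ acts bijectively on $R^+$ one computes $(\Omega-\Delta_{r,s})v$ to be a nonzero multiple of the highest-weight vector of the socle. Thus $\Omega$ has a nontrivial Jordan block on $M$, so $M\notin\cD$. This is precisely the mechanism the paper invokes (tersely) in the proof of \eqref{eqn:rel3}--\eqref{eqn:rel5} and spells out in the proof of \Cref{lem:Ext2}; you should make it explicit here, as it is the crux. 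With that filled in, your Steps~2--3 go through (for Step~3 it is cleanest to say: since the injective hulls have length two, every indecomposable has length at most two, and the length-two indecomposables are exactly the two nonzero $\Ext^1$-classes).
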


To show the theorem, it suffices to show the following:
for any $1\leq r,r'\leq u-1, 1\leq s\leq v-1$, we have
\begin{align}
&\Ext^1(\mc L_r,\mc L_{r'})=\Ext^1(\mc L_r',\mc D^\pm_{r,s})=
\Ext^1(\mc D^\pm_{r,s},\mc L_r')=0,\label{eqn:rel1}\\
&\Ext^1(\mc D^+_{r,s},\mc D^+_{r',s'})=
\Ext^1(\mc D^-_{r,s},\mc D^-_{r',s'})=0,\label{eqn:rel2}\\
&\Ext^1(\mc D^+_{r,s},\mc D^-_{r',s'})=
\Ext^1(\mc D^-_{r,s},\mc D^+_{r',s'})=
\begin{cases}
\C&(r'=u-r,s'=v-s),\\
0&(\mbox{otherwise}),
\end{cases}
\label{eqn:rel3}\\
&\Ext^1(\mc D^\pm_{r,s},\mc E^\pm_{r',s'})=
\Ext^1(\mc E^\pm_{r,s},\mc D^\pm_{r',s'})=
\Ext^1(\mc E^\pm_{r,s},\mc E^\pm_{r',s'})=0,\label{eqn:rel4}
\end{align}
where double signs are not necessarily in the same order, 
and moreover for $\lam\not\in (\lam_{r,s}+Q)\cup(\lam_{u-r,v-s}+Q)$ and $\mc M_{r',s'}\in \{\mc D^\pm_{r',s'}, \mc E^\pm_{r',s'}\}$,
\begin{align}
&\Ext^1(\mc E_{\lam,\Delta_{r,s}},\mc L_{r'})=\Ext^1(\mc L_{r'},\mc E_{\lam,\Delta_{r,s}})=0,\quad 
\Ext^1(\mc E_{\lam,\Delta_{r,s}},\mc M_{r',s'})=\Ext^1(\mc M_{r',s'},\mc E_{\lam,\Delta_{r,s}})=0. \label{eqn:rel5}
\end{align}
By using the rationality of the category $\mc O_k$ of $\savoa k\g$,
we already have \eqref{eqn:rel1} and \eqref{eqn:rel2}.
To show \eqref{eqn:rel3}--\eqref{eqn:rel5},
we first show the following.

\begin{proposition}\label{prop:relext}
Let $\g$ be a finite-dimensional simple Lie algebra and $k$ be admissible.
Let $M,N$ be finite-length, almost simple, relaxed-highest weight $\savoa k\g$-modules with the lowest conformal weights $h_M$ and $h_N$.
Then the following hold. 
\begin{enumerate} \item If $h_M= h_N$, then $\Ext^1(M,N)\cong \Ext^1(M_{top},N_{top})$.
\item  If $h_M\neq h_N$, then $\Ext^1(M,N)=0$.
\end{enumerate}
\end{proposition}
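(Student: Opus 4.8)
The plan is to compare extensions of $\savoa{k}{\g}$-modules with extensions of their lowest-conformal-weight (``top'') spaces, which are modules over Zhu's algebra $Z(\savoa{k}{\g})$ — a quotient of $U(\g)$ at admissible level. I would fix an extension $0\to N\to E\to M\to 0$ in $\sWt{k}$; since $M$ and $N$ are finite length and relaxed-highest weight, $E$ is lower bounded. Two standard facts will be used throughout: first, because $M$ and $N$ are almost simple they are generated by $M_{top}$ and $N_{top}$, hence are quotients of the relaxed Verma modules $M_k(M_{top})$ and $M_k(N_{top})$, and in fact $M=L_k(M_{top})$, $N=L_k(N_{top})$, so the kernels of $M_k(M_{top})\twoheadrightarrow M$ and $M_k(N_{top})\twoheadrightarrow N$ are supported in conformal weights strictly above $h_M$ and $h_N$ respectively; second, $M$ and $N$ being almost irreducible, every nonzero submodule of each meets the top.

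First I would dispose of the non-integral case of (2): if $h_M-h_N\notin\mathbb{Z}$, the conformal weights occurring in $M$ and in $N$ lie in disjoint cosets of $\mathbb{Z}$ in $\mathbb{C}$, and since $\ag$ preserves, coset by coset, the sum of generalized $L_0$-eigenspaces, $E$ splits and $\Ext^1(M,N)=0$. For the remaining case of (2), $h_M-h_N\in\mathbb{Z}\setminus\{0\}$, so $\mathrm{Re}(h_M)\ne\mathrm{Re}(h_N)$; by symmetry (applying the contragredient dual, which swaps sub and quotient and preserves conformal weights), I may assume the quotient $M$ has the strictly smaller real conformal weight. Then $E_{h_M}=M_{top}$ and it is killed by $\g[t]t$, so $\langle E_{h_M}\rangle$ is a relaxed-highest weight submodule surjecting onto $M=L_k(M_{top})$, whence $\langle E_{h_M}\rangle\cap N$ is a quotient of $\ker\!\big(M_k(M_{top})\twoheadrightarrow L_k(M_{top})\big)$ — in particular a submodule of $N$ all of whose composition factors occur in that kernel. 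The task is to conclude $\langle E_{h_M}\rangle\cap N=0$; granting it, $E\cong M\oplus N$.

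For part (1), with $h_M=h_N=:h$: taking the conformal-weight-$h$ space of the given extension is exact (nothing lies below $h$), so $E_h$ is an extension of $M_{top}$ by $N_{top}$ of $Z(\savoa{k}{\g})$-modules, giving a natural map $\Phi\colon\Ext^1_{\savoa{k}{\g}}(M,N)\to\Ext^1_{Z(\savoa{k}{\g})}(M_{top},N_{top})$. Injectivity of $\Phi$ is the clean direction: if $E_h=W\oplus N_{top}$ with $W\cong M_{top}$ a $Z$-submodule, then $\langle W\rangle$ has degree-$h$ part exactly $W$ (it is a quotient of $M_k(W)$, whose top is $W$), surjects onto $M$, and $\langle W\rangle\cap N$ — a submodule of $N$ with zero degree-$h$ part, hence meeting $N_{top}$ trivially — vanishes by almost simplicity; thus $E=\langle W\rangle\oplus N$. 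Surjectivity of $\Phi$ is the substantive direction: given $0\to N_{top}\to T\to M_{top}\to 0$ over $Z(\savoa{k}{\g})$, I would apply the exact functor $M_k(-)$, form the pushout $P$ of $N\leftarrow M_k(N_{top})\hookrightarrow M_k(T)$ so that $0\to N\to P\to M_k(M_{top})\to 0$, and then descend $M_k(M_{top})$ to $M$ by quotienting $P$ by a lift of $\ker\!\big(M_k(M_{top})\twoheadrightarrow M\big)$; the existence of such a lift is equivalent to the splitting of $0\to N\to\ker(P\to M)\to\ker(M_k(M_{top})\to M)\to 0$, whose quotient term is supported above conformal weight $h$. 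One also has to verify that the resulting module is a module over the simple affine vertex algebra and not merely over $\uavoa{k}{\g}$: by \cite{ZhuMod96} and \Cref{rhwsimples} a relaxed-highest weight $\uavoa{k}{\g}$-module is a $\savoa{k}{\g}$-module exactly when its top space is a $Z(\savoa{k}{\g})$-module, which is the case here.

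I expect the main obstacle to be everything that is not a grading or a generation argument, namely the vanishing of $\langle E_{h_M}\rangle\cap N$ in the integral case of (2) and the splitting needed for surjectivity in (1). Both assert that, along the length filtrations of the relevant relaxed Verma modules, there are no hidden extensions linking the top space of $N$ to higher strata — and this is exactly where admissibility of $k$ is indispensable: it enters through the semisimplicity of the category $\mc{O}$ of $\savoa{k}{\g}$ \cite{A12-2}, the explicit description of $Z(\savoa{k}{\g})$ as a quotient of $U(\g)$ together with the central-character property of \Cref{rem:centralchar}, and the consequent control of the composition structure of relaxed Verma modules. I would organize these inputs as an induction on total length, killing the relevant $\Ext^1$'s stratum by stratum, taking care that the induction does not circularly presuppose the statement being proved.
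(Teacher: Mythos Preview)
Your framework is sound and your treatment of injectivity in (1) --- generating the submodule from a complement in the top and using almost-simplicity to force the intersection with $N$ to vanish --- is exactly the paper's Proposition~4.4. Your reduction in (2) via contragredient duality to the case where the quotient has strictly smaller lowest conformal weight is likewise the paper's first move (their Lemma~4.6).

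The genuine gap is in how you propose to finish the integral case of (2). You correctly isolate the problem --- showing $\langle E_{h_M}\rangle\cap N=0$ --- but your plan of an ``induction on total length, killing the relevant $\Ext^1$'s stratum by stratum'' does not supply the mechanism. The paper, after further reducing to $A$ simple, $C_{top}$ simple, and $B$ generated by $C_{top}$, proves instead that the universal $\savoa{k}{\g}$-module $V(C_{top})$ with top $C_{top}$ is already \emph{irreducible}. When $C_{top}$ is highest-weight this is immediate from the semisimplicity of $\mc{O}_k$, which you cite. But when $C_{top}$ is cuspidal the crucial tool is Mathieu's twisted localization functor $\bm{f}^{\lambda}$: one shifts the weight support so that $\bm{f}^{\lambda}(V)_{top}$ acquires a highest-weight submodule $D^+_{r,s}$; a hypothetical proper submodule of $V$ would then, after localization, produce a proper highest-weight submodule inside the module generated by $D^+_{r,s}$, contradicting its simplicity (again by admissibility). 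This localization step is precisely how admissibility is brought to bear on cuspidal tops, and neither central-character considerations nor a length induction substitutes for it. You do not mention localization at all, and without it the argument cannot close.

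On surjectivity in (1): the paper dismisses it in one phrase (``by the Zhu induction''), and your caution, while not misplaced, is partly resolved by what (2) actually proves. Once one knows that $V(S)$ is irreducible for every simple top $S$, the almost-simple quotient $L_k(T)$ of $M_k(T)$ is forced to have exactly the composition factors $L_k(N_{top})$ and $L_k(M_{top})$, so it furnishes the required extension directly --- no pushout or lifting argument is needed. In other words, the irreducibility established in (2) does double duty, and the order matters: prove (2) first.
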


For formal series $f(z,q)$ and $g(z,q)$, we write $f(z,q)\geq g(z,q)$
if it holds coefficient-wise.
\vspace{5mm}

\noindent { \bf Case 1:} The case of $h_M=h_N$.

Consider the natural linear map $\Ext^1(M,N)\rightarrow \Ext^1(M_{top},N_{top})$.
By the Zhu induction, it is a surjection.
Therefore, it suffices to show that the kernel of this map is zero, which follows from the following. 

\begin{proposition}\label{prop:same}
Let $A,C$ be almost simple, relaxed-highest weight modules over $\ag$.
Suppose that $A$ and $C$ have the same lowest conformal weight.
Let $B$ be a $\ag$-module with the exact sequence
\begin{equation}\label{eqn:seq1}
0\ra A\ra B\ra C\ra0.
\end{equation}
Suppose that the exact sequence
\begin{equation}\label{eqn:seq2}
0\ra A_{top}\ra B_{top}\ra C_{top}\ra0
\end{equation}
splits, that is, $B_{top}=A_{top}\oplus C_{top}$.
Then \eqref{eqn:seq1} splits.
\end{proposition}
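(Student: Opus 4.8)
Write $\iota\colon A\hookrightarrow B$ and $\pi\colon B\twoheadrightarrow C$ for the maps in \eqref{eqn:seq1}, and regard $A$ as a submodule of $B$. The plan is to produce the splitting explicitly: inside $B$, the $\ag$-submodule generated by the copy of $C_{top}$ furnished by the splitting of \eqref{eqn:seq2} will be shown to be a complement of $A$.

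First I would fix the grading. Being relaxed-highest weight, $A$ and $C$ are graded by conformal weight with $L_0$ acting locally finitely and with generalized eigenvalues in $h+\ZZ_{\geq 0}$, where $h=h_M=h_N$ is their common lowest conformal weight; this property transfers through \eqref{eqn:seq1}, so $B=\bigoplus_{n\geq 0}B_{h+n}$ and $B_{top}=B_h$. Taking the conformal-weight-$h$ part of \eqref{eqn:seq1} recovers \eqref{eqn:seq2}, with $\ker(\pi|_{B_{top}})=B_{top}\cap A=A_{top}$, and by hypothesis this splits: $B_{top}=A_{top}\oplus C_{top}$ as modules over the zero-mode copy of $\g$ (equivalently over the Zhu algebra $Z(V)$, which acts on any top space through zero modes). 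I would then note that the positive modes annihilate all of $B_{top}$, since $(x\otimes t^{n})B_{top}\subseteq B_{h-n}=0$ for $n\geq 1$; in particular $\g[t]t\cdot C_{top}=0$, while $\g\cdot C_{top}\subseteq C_{top}$.

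Now set $B':=U(\ag)\,C_{top}\subseteq B$. By the \pbw{} theorem, using $\g[t]t\cdot C_{top}=0$ and the $\g$-stability of $C_{top}$, one has $B'=U(\g\otimes t^{-1}\C[t^{-1}])\,C_{top}$; since every non-constant element of $U(\g\otimes t^{-1}\C[t^{-1}])$ strictly raises conformal weight, this forces $B'\cap B_{top}=C_{top}$, i.e.\ $B'_{top}=C_{top}$. With this in hand the two remaining checks are routine. Surjectivity of $\pi|_{B'}$: because $\ker(\pi|_{B_{top}})=A_{top}$ and $B_{top}=A_{top}\oplus C_{top}$, the map $\pi$ carries $C_{top}$ isomorphically onto $C_{top}\subseteq C$, and $C$ is generated by its top space (it is almost simple), so $\pi(B')=U(\ag)\,C_{top}=C$. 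Triviality of $B'\cap A$: this is an $\ag$-submodule of $A$ whose top space sits inside $B'_{top}\cap A_{top}=C_{top}\cap A_{top}=0$, and since $A$ is almost simple it has no nonzero submodule meeting $A_{top}$ trivially, so $B'\cap A=0$. Therefore $\pi|_{B'}\colon B'\to C$ is an isomorphism, and as $\pi(A+B')=C$ forces $A+B'=B$ we get $B=A\oplus B'$; thus \eqref{eqn:seq1} splits.

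The step carrying the real content is the identity $B'\cap B_{top}=C_{top}$, which is exactly what guarantees a genuine complement; it rests on positive modes killing $B_{top}$, and this uses crucially that $A$ and $C$ share the \emph{same} lowest conformal weight. Were $A$ to lie strictly below $C$, vectors such as $(x\otimes t^{n})c$ with $c\in C_{top}$ could land in $A$ at lower conformal weight, the submodule generated by $C_{top}$ would acquire extra bottom vectors, and the argument would break down — consistently with the fact, recorded separately, that in the unequal case there is no nontrivial extension at all. The rest is formal manipulation of the \pbw{} filtration and of the definition of ``almost simple'', so I expect no further obstacle.
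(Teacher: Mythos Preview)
Your argument is correct and follows essentially the same route as the paper: both define the submodule $B'=U(\ag)\,C_{top}$, use the \pbw{} decomposition together with the vanishing of positive modes on $B_{top}$ to control $B'_{top}$, and invoke almost simplicity of $A$ to conclude $A\cap B'=0$. The only difference is cosmetic: the paper finishes with a character comparison $\ch{B}\geq \ch{A}+\ch{B'}\geq \ch{A}+\ch{C}=\ch{B}$ to identify $B'\cong C$, whereas you observe directly that $\pi|_{B'}$ has kernel $B'\cap A=0$ and image $C$, which is slightly more streamlined but not a genuinely different idea.
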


\begin{proof}
Let $C'$ be the submodule of $B$ generated by $C_{top}$.
We first show that $A\cap C'=0$.
Let $v$ be a non-zero element of $A\cap C'$, on the contrary.
Since $v\in C'$, it has the form
$$
v=\sum_{i=1}^r a_iu_i,\qquad r\geq 1,\quad a_i\in U(\ag), \quad  u_i\in C_{top}.
$$
Since $A$ is almost simple, we have a non-zero element
$w\in A_{top}$ and $b\in U(\ag)$ such that
$
w=bv.
$
Then $w=\sum_i (ba_i)u_i$ and by the PBW theorem,
we have $c_i\in U(\g)$ such that $w=\sum_i c_iu_i$.
But it contradicts to \eqref{eqn:seq2}.
Therefore, we have $A\cap C'=0$.

We have just obtained $A\oplus C'\subset B$.

Let $M(C_{top})$ be the relaxed Verma module generated by $C_{top}$:
$$
M(C_{top})=U(\ag)\otimes_{U(\ag[t]\oplus \C K)}C_{top}.
$$
Then we have the surjective $\ag$-homomorphism
$M(C_{top})\ra C'$.
Since $C$ is the almost simple quotient of $M(C_{top})$,
we have also the surjective homomorphism $C'\ra C$.
Therefore, $\ch{C'}\geq \ch{C}$.

We have the inequalities
$$
\ch{B}\geq \ch{A}+\ch{C'}\geq \ch{A}+\ch{C}=\ch{B}.
$$
Thus, $\ch{C'}=\ch{C}$, which implies the splitting $C\cong C'$.
\end{proof}
\vspace{5mm}

\noindent {\bf Case 2:} $h_M\neq h_N$.

\begin{proposition}\label{sec:propdiff}
Let $A,C$ be finite-length relaxed-highest weight modules
over $\savoa k\g$ with different lowest conformal weights $h_A$ and $h_C$.
Let $B$ be a $\savoa k\g$-module with the exact sequence
\begin{equation}\label{eqn:different}
0\ra A\ra B\ra C\ra0.
\end{equation}
Then it splits.
\end{proposition}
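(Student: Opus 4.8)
The plan is to strip off the conformal‑grading data, reduce by dévissage to the case of simple modules, and then invoke the structure of relaxed Verma modules at admissible level; the last step is the only non‑formal one.

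\emph{Step 1 (reduction to a single conformal coset).} Since $[L_0,x_n]=-n x_n$ for $x\in\g$, the class of $L_0$ in $\CC/\ZZ$ is a well‑defined grading on every object of $\sWt{k}$, so $B=\bigoplus_{[\delta]\in\CC/\ZZ}B_{[\delta]}$ as $\savoa k\g$‑modules, compatibly with \eqref{eqn:different}. If $h_A$ and $h_C$ lie in different cosets then $B=B_{[h_A]}\oplus B_{[h_C]}$ with $B_{[h_A]}\cong A$ and $B_{[h_C]}\cong C$, so \eqref{eqn:different} splits. Hence I may assume $h_A-h_C$ is a nonzero integer, and I treat the two cases $h_C<h_A$ and $h_A<h_C$.

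\emph{Step 2 (reduction to simple $A$, in the case $h_C<h_A$).} Here the lowest conformal weight of $B$ is $h_C$, attained only by $C$, so $B_{h_C}=C_{top}$ and — as $B_{<h_C}=0$ and $\g[t]t$ strictly lowers conformal weight — $B_{h_C}$ consists of relaxed‑highest weight vectors. Lifting a relaxed‑highest weight generator of $C$ to $\tilde v\in B_{h_C}$, the submodule $C':=U(\ag)\tilde v$ is relaxed‑highest weight, maps onto $C$ under $B\to C$ with $C'_{top}\cong C_{top}$ (because $(C'\cap A)_{h_C}\subseteq A_{h_C}=0$), and $C'+A=B$; thus \eqref{eqn:different} splits iff $C'\cap A=0$. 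I would run an induction on the length of $B$. If $A$ has a nonzero proper submodule, pick one, $A'$, that is simple (hence simple relaxed‑highest weight, as its bottom is killed by $\g[t]t$ and generates it); the inductive hypothesis applied to $\ses{A/A'}{B/A'}{C}$ produces a submodule $\bar C\subseteq B/A'$ mapping isomorphically onto $C$, and, writing $\hat C\subseteq B$ for its preimage, one checks $\hat C\cap A=A'$, so $\ses{A'}{\hat C}{C}$ is exact with middle term of strictly smaller length; the inductive hypothesis again splits it, and the resulting complement to $A'$ inside $\hat C$ is a splitting of \eqref{eqn:different}. If instead $A$ is simple, then $C'\cap A$ is $0$ or $A$: in the first case we are done, and in the second $A\subseteq C'$, whence $C'=B$, so $B$ is itself relaxed‑highest weight, generated by its bottom $B_{h_C}\cong C_{top}$. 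The case $h_A<h_C$ I would reduce to this one by applying the contragredient functor — exact, conformal‑weight preserving, finite‑length preserving, and (using the classification of simple relaxed‑highest weight modules together with uniqueness of the pertinent self‑extensions) also relaxed‑highest weight preserving — which interchanges the roles of $A$ and $C$; alternatively one argues symmetrically. In all cases the problem is reduced to showing $\Ext^1(C,A)=0$ for \emph{simple} relaxed‑highest weight $A,C$ with distinct lowest conformal weights.

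\emph{Step 3 (the simple case — the main obstacle).} With $C$ simple and $h_C<h_A$, any nonsplit extension gives, by Step 2, a relaxed‑highest weight $B$ generated by $B_{h_C}\cong C_{top}$; such $B$ is a quotient of the largest $\savoa k\g$‑quotient $\widehat M_0$ of the relaxed Verma module $M(C_{top})$, and, $C$ being simple, $\widehat M_0\to C$ is a projective cover in the category of $\savoa k\g$‑modules with conformal weight $\geq h_C$, so $A\cong\rad(\widehat M_0)/K'$ for a maximal submodule $K'$ of $\rad(\widehat M_0)$; equivalently $\Ext^1(C,A)\neq0$ would force $A$ to occur in the second radical layer of $\widehat M_0$. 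The remaining task is therefore to show that this second radical layer contains no simple module whose lowest conformal weight differs from $h_C$. This is where admissibility of $k$ is essential: I would derive it from the structure of $M(C_{top})$ and $\widehat M_0$ at admissible level — which follows from the rationality of the category $\mc{O}_k$ of $\savoa k\g$ \cite{A12-2} together with the explicit analysis of relaxed Verma modules over $\SLA{sl}2$ at fractional level (Adamovi\'c–Milas, Ridout) — decomposing $C_{top}$ by central characters of Zhu's algebra (Remark \ref{rem:centralchar}) so as to work one block at a time. Pinning down this radical structure is the hard part; Steps 1 and 2 are formal.
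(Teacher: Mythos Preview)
Your Steps 1 and 2 are correct and match the paper's reductions: the paper also passes to the case $h_A>h_C$ via the contragredient, then reduces to simple $A$ with $B$ relaxed-highest weight (and further to simple $C_{top}$, an easy reduction you omit). The genuine gap is Step~3, which you correctly flag as the hard part but do not actually carry out.

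The references you invoke do not close it. Rationality in $\mc{O}_k$ only shows that $\widehat M_0$ is simple when $C_{top}$ is a \emph{highest-weight} module, so that the induction lands in category $\mc{O}$; the Adamovi\'c--Milas and Ridout papers classify simple relaxed modules and exhibit some specific indecomposables, but they do not determine the radical filtration of $\widehat M_0$ for a general cuspidal $C_{top}$. Central-character decomposition via Zhu's algebra only separates blocks --- it does not bound lengths within a block.

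The idea the paper supplies, and which your sketch is missing, is \emph{twisted localization} \`a la Mathieu. When $C_{top}$ is cuspidal, $e_0$ and $f_0$ act bijectively on all of $\widehat M_0$, so for a suitable $\lambda$ the exact functor $\bm f^\lambda$ sends $\widehat M_0$ to a module whose top is $E^+_{r,s}$, which contains the highest-weight piece $D^+_{r,s}$. If $\widehat M_0$ had a nonzero proper submodule $N$, then exactness of $\bm f^\lambda$ together with a weight-support argument forces $\bm f^\lambda N$ to contain a highest-weight submodule $H$ at strictly larger conformal weight; but the submodule of $\bm f^\lambda(\widehat M_0)$ generated by $D^+_{r,s}$ is a highest-weight $\savoa k\g$-module containing $H$, hence non-simple, contradicting rationality in $\mc{O}_k$. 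This localization step is exactly what transports the cuspidal case into category $\mc{O}$, where admissibility bites; without it Step~3 remains unproved.
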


\begin{lemma} \label{lem:big}
It holds if  $h_A>h_C$.
\end{lemma}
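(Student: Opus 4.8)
The plan is to prove that $\Ext^1(C,A)=0$ whenever $h_A>h_C$, which is equivalent to the splitting of the given sequence. I would first dispose of the sub-case $h_A-h_C\notin\ZZ$. Here the Sugawara element does the job: since $[L_0,x\otimes t^n]=-n\,(x\otimes t^n)$, one checks $(L_0-(h-n))^m(x\otimes t^n)v=(x\otimes t^n)(L_0-h)^mv$, so collecting the generalized $L_0$-eigenspaces of $B$ into cosets modulo $\ZZ$ decomposes $B$ into $\ag$-submodules. As the conformal weights of $A$ lie in $h_A+\NN$, those of $C$ in $h_C+\NN$, and these cosets are disjoint, $B=B[h_A]\oplus B[h_C]$; since $\pi(B[h_A])\subseteq C[h_A]=0$ we get $B[h_A]=A$ and $B[h_C]\xrightarrow{\ \sim\ }C$, so the sequence splits.

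The substance is the sub-case $h_A-h_C\in\ZZ_{>0}$. I would reduce to $A$ simple by dévissage: picking a simple submodule $A'\subset A$ and setting $A''=A/A'$, both lie in $\sWtpg k$, both have lowest conformal weight $\ge h_A>h_C$, and the exact sequence $\Ext^1(C,A')\to\Ext^1(C,A)\to\Ext^1(C,A'')$ reduces vanishing for $A$ to vanishing for the shorter modules $A',A''$; iterating down a composition series leaves the case $A$ simple.

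So assume $A$ is simple. Since $h_A>h_C$, the lowest conformal weight of $B$ is $h_C$ and $A_{h_C}=0$, so $\pi$ restricts to an isomorphism $B_{h_C}\xrightarrow{\ \sim\ }C_{top}$; moreover every vector of $B_{h_C}$ is relaxed-highest weight, as $(\g[t]t)B_{h_C}$ sits in conformal weights below $h_C$ and hence vanishes. Let $C'\subseteq B$ be the submodule generated by $B_{h_C}$. Because $C$ is generated by $C_{top}$ we get $\pi(C')=C$, whence $B=A+C'$ and $A\cap C'=\ker(\pi|_{C'})$. As $A$ is simple, $A\cap C'$ equals $0$ or $A$; if it is $0$, then $\pi|_{C'}\colon C'\to C$ is an isomorphism whose inverse is a section of $\pi$, and we are done.

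It remains to exclude $A\cap C'=A$, i.e. $A\subseteq C'$. In that case $C'=A+C'=B$, so $B=\langle B_{h_C}\rangle$ is generated by its lowest conformal weight space; equivalently $B$ is a quotient of the relaxed Verma module $M_k(C_{top})$, and $A$ is then a composition factor of $M_k(C_{top})$ occurring strictly above its top, with $A_{top}$ realized as a $\g$-subquotient of $M_k(C_{top})_{h_C+N}$, $N=h_A-h_C$. This is the main obstacle: one must show this cannot happen for a \emph{nonsplit} extension. The argument I would use combines the fact that at admissible level the top space of any module in $\sWtpg k$ carries a central character (Remark \ref{rem:centralchar}), forcing $A_{top}$ and $C_{top}$ to share one, with the explicit structure of relaxed Verma modules over $\AKMA{sl}{2}$ at admissible level, to rule out a simple $\savoa k\g$-submodule occurring strictly above the bottom of $B$; this structural input plays here the role that the splitting hypothesis on top spaces played in Proposition \ref{prop:same}. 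Once $A\cap C'=A$ is excluded the lemma follows, and the companion case $h_A<h_C$ of Proposition \ref{sec:propdiff} is then obtained from the present lemma via contragredient duality on $\sWtpg k$, which preserves lowest conformal weights and interchanges sub- and quotient modules.
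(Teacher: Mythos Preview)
Your setup is essentially the same as the paper's: reduce to $A$ simple, identify $B_{h_C}\cong C_{top}$, let $C'=\langle B_{h_C}\rangle$, and observe that the sequence splits iff $A\cap C'=0$. The case $h_A-h_C\notin\ZZ$ is harmless and your d\'evissage on $A$ is fine.

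The genuine gap is at the step you yourself flag as ``the main obstacle'': excluding $A\subseteq C'$. You write that you would combine the central-character constraint with ``the explicit structure of relaxed Verma modules over $\AKMA{sl}{2}$'', but neither ingredient as stated closes the argument. The central character only forces $A_{top}$ and $C_{top}$ to share a Casimir eigenvalue; it does not prevent $A$ from appearing as a proper submodule of $M_k(C_{top})$ above the top. What is actually needed is to show that the $\savoa k\g$-module generated by $C_{top}$ is \emph{simple}, and this is where the real work lies.

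The paper carries this out in two steps you omit. First, a further reduction (which you do not make) to $C_{top}$ being a \emph{simple} $\g$-module: one peels off a simple $\g$-summand of $B_{h_C}$ and argues that at least one of the resulting sub- or quotient sequences remains nonsplit. Second, with $C_{top}$ simple, a dichotomy: if $C_{top}$ is highest weight (up to Borel choice), then $V(C_{top})$ is a highest-weight $\savoa k\g$-module and is simple by rationality of $\savoa k\g$ in category $\mathcal O$ at admissible level; if $C_{top}$ is cuspidal, one applies Mathieu's twisted localization functor $\bm f^\lambda$ to drag the situation back to the highest-weight case and again invoke admissibility. It is this twisted-localization argument that does the heavy lifting for cuspidal tops, and nothing in your sketch substitutes for it. Without these two ingredients your proof stops exactly at the point where the content begins.
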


We first show the proposition using the lemma.

\begin{proof}[Proof of \Cref{sec:propdiff}]
 If $h_A<h_C$, then we take restricted duals so that
$$
0\ra C^\vee\ra B^\vee \ra A^\vee\ra 0.
$$
By \Cref{lem:big}, it splits. So, \eqref{eqn:different} also splits.
\end{proof}

From now on, we suppose $h_A>h_C$ and prove \Cref{lem:big}.
Suppose on the contrary that \eqref{eqn:different} does not split.

\begin{lemma}
If \eqref{eqn:different} does not split, then there is a non-splitting exact sequence
$0\ra A''\ra B''\ra C\ra0$ such that $A''$ is a simple module
and $B''$ is a relaxed-highest weight module.
\end{lemma}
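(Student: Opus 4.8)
The plan is to reach the claimed extension by two reductions: first replace $A$ by a simple module while keeping $C$ and non-splitness, using the $\Ext$ long exact sequence and induction on the length of $A$; then observe that once $A$ is simple, the middle term of a non-split extension of $C$ by $A$ is automatically relaxed-highest weight. Two bookkeeping facts about the conformal-weight grading are used repeatedly. First, $\g[t]t$ strictly lowers conformal weight, so the subspace of any module at its lowest conformal weight consists of relaxed-highest weight vectors; in particular, since $h_A>h_C$, in any non-split $\dses{A}{B}{C}$ one has $B_{h_C}\cong C_{h_C}=C_{top}$ (the conformal weight $h_C$ subspaces), and $C$, being relaxed-highest weight, is generated by a single such vector. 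Second, for a nonzero proper submodule $A_1\subsetneq A$ both $A_1$ and $A/A_1$ are again finite length and, since conformal weights of a nonzero quotient cannot decrease, both have lowest conformal weight $\ge h_A>h_C$. We work throughout in the abelian category $\sWtpg{k}$, which is of finite length, so Yoneda $\Ext$ and its long exact sequences are available.

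\textbf{Step 1: reduce to $A$ simple.} Induct on the length of $A$. If $A$ is simple, take $A''=A$, $B''=B$. Otherwise choose $0\neq A_1\subsetneq A$, set $A_2=A/A_1$, and consider the exact sequence $\Ext^1(C,A_1)\xrightarrow{i_*}\Ext^1(C,A)\xrightarrow{p_*}\Ext^1(C,A_2)$. If $p_*[B]\neq0$, the pushout $\dses{A_2}{B/A_1}{C}$ is non-split with $A_2$ shorter and $h_{A_2}>h_C$, and we recurse. If $p_*[B]=0$, write $[B]=i_*\xi$ with $\xi\in\Ext^1(C,A_1)$; then $\xi\neq0$ since $[B]\neq0$, so $\dses{A_1}{B_\xi}{C}$ is non-split with $A_1$ shorter and $h_{A_1}>h_C$, and we recurse. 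After finitely many steps we obtain a non-split $\dses{S}{B'}{C}$ with $S$ simple and $h_S>h_C$.

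\textbf{Step 2: the middle term is relaxed-highest weight.} Let $v_C$ be a relaxed-highest weight generator of $C$; it necessarily lies in $C_{top}$, for otherwise $C=U(\ag)v_C$ would have nothing at conformal weight $h_C$. Lift $v_C$ to $v\in B'_{h_C}$ using that $B'\to C$ is a graded surjection; since $v$ sits at the lowest conformal weight of $B'$, it is a relaxed-highest weight vector, so $B'':=\langle v\rangle$ is a relaxed-highest weight module. Its image in $C$ contains $v_C$, hence is all of $C$, so $B''\twoheadrightarrow C$ with kernel $S\cap B''\subseteq S$. If $S\cap B''=0$ then $B''\cong C$ and $B''\hookrightarrow B'$ splits $\dses{S}{B'}{C}$, contradicting Step 1; hence $S\subseteq B''$, whence $B''=B''+S=B'$. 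Thus $B'$ is relaxed-highest weight, and $\dses{S}{B'}{C}$ is the required non-split sequence with $A''=S$ simple and $B''=B'$.

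The argument is essentially formal once organized correctly, so the only real obstacle is the bookkeeping: one must verify at each recursion that the inserted module stays strictly above $C$ in conformal weight (guaranteeing $B_{top}=C_{top}$, which is the engine of Step 2) and that every intermediate extension is formed inside $\sWtpg{k}$, so that the Yoneda long exact sequence applies verbatim. One is tempted to do everything in one move --- pass directly to the submodule of $B$ generated by $B_{top}$ and then shrink its intersection with $A$ --- but there the relaxed-highest weight property of the middle term is not obviously preserved under the second shrink, which is exactly why I would carry out Step 1 before Step 2.
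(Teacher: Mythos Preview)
Your proof is correct and takes a somewhat different route from the paper. The paper first passes to the submodule $B'\subseteq B$ generated by $B_{top}$, obtaining a non-split sequence $0\to A\cap B'\to B'\to C\to 0$ with $B'$ already relaxed-highest weight; then it quotients by a maximal submodule $I\subsetneq A\cap B'$ to make the kernel simple. You invert the order: first reduce $A$ to a simple module via the six-term Yoneda sequence and induction on length, and only afterwards argue that the middle term is generated by a single relaxed-highest weight vector. Both arguments are short; yours makes the non-splitness at each stage transparent via $\Ext^1$, while the paper's keeps every intermediate object a concrete sub- or quotient module of $B$.

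Your closing worry about the paper's approach is unfounded, however. The same observation you use in Step~2, namely that $B_{top}\cong C_{top}=U(\g)v_C$ is a cyclic $\g$-module, shows that $B'=\langle B_{top}\rangle_{\ag}=U(\ag)\tilde v$ for a single lift $\tilde v$ of $v_C$; so $B'$ is relaxed-highest weight, and any quotient $B'/I$ is again generated by the image of $\tilde v$, hence still relaxed-highest weight. The ``second shrink'' therefore causes no difficulty.
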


\begin{proof}
Let $B'$ be the submodule generated by the top space of $B$.
Since $h_A>h_C$, we have $B_{top}\cong C_{top}$ and 
 the exact sequence
\begin{equation}\label{eqn:spltemp}
0\ra A\cap B'\ra B'\ra C\ra0
\end{equation}
is induced,
since $C$ is generated by its top space.

We show that it does not split. Suppose on the contrary
that \eqref{eqn:spltemp} splits: $\iota:C\hookrightarrow B'$.
Since $B'$ is a submodule of $B$, the embedding $\iota:C\hookrightarrow B$ is induced.
Since $\pi \circ \iota=\mathrm{id}_C$, it is a splitting of \eqref{eqn:different}.

Since \eqref{eqn:spltemp} is non-splitting, the module $A\cap B'$ is non-zero. Then take a maximal submodule $I$ of $A\cap B'$ and set $A''=A/I$ and $B''=B'/I$. We then have a 
non-split exact sequence
$$
0\ra A'' \ra B''\ra C\ra0
$$
with simple $A''$ and relaxed-highest weight $B''$.
\end{proof}

So, we may assume that $A$ is simple and $B$ is a relaxed-highest weight module, towards a contradiction.

\begin{lemma}
Without loss of generality, we may assume that the top space of $C$ is a simple $\g$-module.
\end{lemma}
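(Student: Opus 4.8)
The plan is to shrink $C$ by a d\'evissage on the $\g$-module structure of its top space, pulling the non-split sequence \eqref{eqn:different} back along a suitable filtration of $C$ by relaxed-highest weight submodules. First I would record the key elementary fact. Since $C$ is finite length, its top space $C_{top}$ is a finite-length module over the Zhu algebra $Z(\savoa k\g)$, which by \Cref{rem:centralchar} is a quotient of $U(\g)$; hence $C_{top}$ is a finite-length $\g$-module, and we may fix a composition series $0 = N_0 \subsetneq N_1 \subsetneq \dots \subsetneq N_m = C_{top}$ of $\g$-submodules. For any $\g$-submodule $N \subseteq C_{top}$, the $\ag$-submodule $\langle N\rangle := U(\ag)N$ of $C$ has top space exactly $N$: indeed $C_{top}$ is annihilated by $\g[t]t$ and preserved by the zero modes (the copy of $\g$ preserves $N$, while $L_0$ acts on $C_{top}$ through the quadratic Casimir, a central element of $U(\g)$, so also preserves $N$), whereas $\g[t^{-1}]t^{-1}$ strictly raises conformal weight; hence by the PBW theorem the conformal-weight-$h_C$ part of $U(\ag)N$ is $U(\g)N = N$. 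Putting $C^{(i)} := \langle N_i\rangle$ gives a filtration $0 = C^{(0)} \subsetneq C^{(1)} \subsetneq \dots \subsetneq C^{(m)} = C$ by relaxed-highest weight submodules with $(C^{(i)})_{top} = N_i$, so each $C^{(i)}/C^{(i-1)}$ is relaxed-highest weight with simple top space $N_i/N_{i-1}$ and lowest conformal weight still $h_C < h_A$.

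Next I would carry out the homological step. Write $\xi \in \Ext^1(C,A)$ for the class of \eqref{eqn:different} and $\xi^{(i)} \in \Ext^1(C^{(i)},A)$ for its pullback along $C^{(i)} \hookrightarrow C$, i.e.\ the class of $0 \to A \to \pi^{-1}(C^{(i)}) \to C^{(i)} \to 0$. Then $\xi^{(m)} = \xi \neq 0$ and $\xi^{(0)} = 0$, so there is a least index $i \ge 1$ with $\xi^{(i)} \neq 0$ and $\xi^{(i-1)} = 0$. The long exact sequence for $\Ext^\bullet(-,A)$ attached to $0 \to C^{(i-1)} \to C^{(i)} \to C^{(i)}/C^{(i-1)} \to 0$ reads
\[
\Ext^1\bigl(C^{(i)}/C^{(i-1)},\,A\bigr) \xrightarrow{f} \Ext^1\bigl(C^{(i)},\,A\bigr) \xrightarrow{g} \Ext^1\bigl(C^{(i-1)},\,A\bigr),
\]
where $g$ is restriction along $C^{(i-1)} \hookrightarrow C^{(i)}$; since $g(\xi^{(i)}) = \xi^{(i-1)} = 0$, exactness yields $\eta \in \Ext^1(C^{(i)}/C^{(i-1)},A)$ with $f(\eta) = \xi^{(i)}$, and $\eta \neq 0$. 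Thus $\eta$ represents a non-split extension $0 \to A \to \tilde B \to C^{(i)}/C^{(i-1)} \to 0$ with $A$ simple, $C^{(i)}/C^{(i-1)}$ relaxed-highest weight with \emph{simple} top space, and $h_A > h_{C^{(i)}/C^{(i-1)}} = h_C$.

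Finally I would check that $\tilde B$ is again relaxed-highest weight, so that all standing hypotheses of \Cref{lem:big} are preserved: in any non-split $0 \to A \to \tilde B \to D \to 0$ with $A$ simple, $D$ relaxed-highest weight and $h_A > h_D$, the lowest conformal weight of $\tilde B$ is $h_D$ and $\tilde B_{top} \xrightarrow{\sim} D_{top}$, so $\tilde B^{\circ} := U(\ag)\tilde B_{top}$ surjects onto $D$; hence $\tilde B^{\circ} \cap A$ is $0$ or $A$, it cannot be $0$ (else $\tilde B^{\circ} \cong D$ would split the sequence), so $A \subseteq \tilde B^{\circ}$ and $\tilde B^{\circ} = \tilde B$. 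Replacing $(A,B,C)$ by $(A,\tilde B,\,C^{(i)}/C^{(i-1)})$ then gives the claimed reduction. The only mildly delicate point is the top-space lemma of the first paragraph — that generating an $\ag$-submodule from a $\g$-submodule of $C_{top}$ does not enlarge the top space — which rests on $L_0$ acting on $C_{top}$ through the centre of $U(\g)$; everything else is formal homological algebra, and I do not expect a genuine obstacle.
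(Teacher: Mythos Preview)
Your proof is correct and rests on the same d\'evissage idea as the paper --- reduce the length of $C_{top}$ until it is simple --- but the packaging is somewhat different. The paper works inside $B$: it picks a single simple $\g$-submodule $M\subset B_{top}\cong C_{top}$, sets $B'=\langle M\rangle_{\ag}$ and $C'=\langle\pi(M)\rangle_{\ag}$, and then splits into two cases according to whether $A\subset B'$ (done, since $C'$ has simple top) or not (pass to $B/B'\to C/C'$ and induct on the length of the top space). You instead take the whole composition series of $C_{top}$ at once, work entirely in $C$, and locate the relevant subquotient by the long exact sequence in $\Ext^1(-,A)$, which is the standard cohomological repackaging of exactly that two-case dichotomy. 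What your approach buys is that no case split or explicit induction is needed, and you verify explicitly that the new middle term $\tilde B$ is again relaxed-highest weight; the paper's argument is slightly more hands-on but leaves this last point implicit in its case (2).
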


\begin{proof}
Recall that $\pi:B\ra C$ induces the isomorphism
$\pi:B_{top} \cong C_{top}$ since $h_A>h_C$.
Let $M$ be a simple $\g$-submodule of $B_{top}$.
Set $B'=\langle M\rangle_{\ag}\subset B$ and $C'=\langle \pi(M)\rangle_{\ag}\subset C$.
Then exactly one of the following (1) or (2) holds:
\begin{align*}
&(1)\quad 0\ra A\ra B' \ra C'\ra0\mbox{ (non split) }\\
&(2)\quad 0\ra A\ra B/B' \ra C/C'\ra 0 \mbox{ (non-split) }.
\end{align*}
If (1) holds, we have obtained the lemma.
If (2) holds, then since the length of the top space of $C/C'$
is less than that of $C$, we can proceed by induction until
(1) holds or the top space of $C/C'$ becomes simple.
\end{proof}

So, we assume that the top space of $C$ is simple.

\begin{proof}[Proof of \Cref{sec:propdiff}]
We first show the case of $\g=\SLA{sl}{2}$ and then state how to
generalize the proof to the general cases.
By the above lemmas, we may  assume that $B$ is a relaxed-highest weight module
generated by the irreducible module $C_{top}$.
To deduce the contradiction, it suffices to show that the ``Verma" $\savoa k\g$-module $V(C_{top})$ generated by $C_{top}$ is irreducible: in this case, we have $V(C_{top})\cong B\cong C$ and $A=0$, which contradicts to the simpleness of $A$.

Let $\g=\SLA{sl}{2}$.
By the classification of irreducible weight modules of $\SLA{sl}2$, we have the following 2 possibilities:

\begin{enumerate}
\item $C_{top}$ is a highest weight module (replacing the Borel of $\g$ if necessary).
\item $C_{top}$ is a cuspidal simple module.
\end{enumerate}

First case: We see that $V(C_{top})$ is a highest weight module. 
Since $k$ is admissible, we see that
$V(C_{top})$ is irreducible.

Second case: 
In this case, we see that $f=ft^0,e=et^0$ acts bijectively on $V=V(C_{top})$.
Then we have $\lam\in \C$ such that the twisted localization $f^\lam(V)$ satisfies
$f^\lam(V)_{top}\cong E^+_{r,s}$ for some $r,s$.
Here, $E^+_{r,s}$ denotes $E^+_{\lam_{r,s},\Delta_{\lam_{r,s}}}$.
Suppose that $V$ is not simple.
Then we have a non-trivial relaxed-highest weight submodule $N\subset V$, whose lowest conformal weight is necessarily greater than that of $V$.
Since $f,e$ acts bijectively on $N$, the  top space $N_{top}$ is a direct sum of cuspidal simple modules.
By \cite[Lemma 13.12]{MatCla00}, we see that $f^\lam(V)$ is generated by $f^\lam(V)_{top}=f^\lam(V_{top})\cong E^+_{r,s}$.
Since $f^\lam$ is an exact functor, we have the non-trivial
submodule $f^\lam N$.
Since $f^\lam(V)_{top}$ is a sum of highest and lowest weight modules, cuspidal modules do not appear as a component of $f^\lam N_{top}$.
As $f^\lam N$ is of finite length and there is a $\mu\in \h^*$ such that the whole set $\mu+Q$ is included in the weight support of $f^\lam N_{top}$ because of the cuspidality of $N_{top}$, we see that there is a highest weight submodule of $f^\lam N_{top}$.
Therefore, there is a highest weight submodule $H$ in $f^\lam N$.

Then, the module $D$ generated by $D^+_{r,s}\subset f^\lam(V)_{top}$ is a highest weight module containing $H$.
Here, $D^+_{r,s}$ denotes $D^+_{\lam_{r,s}}$.
Since $k$ is admissible, the module $D$ is simple, which contradicts that $D$ contains $H$.
Therefore, $V=V(C_{top})$ is irreducible.

Thus, \eqref{eqn:different} splits.

We now let $\g$ be an arbitrary finite-dimensional simple Lie algebra.
Let us recall the classification of weight modules from \cite{MatCla00}.
By replacing the Borel of $\g$ if necessary,
we have the following possibilities (1)' and (2)' instead of (1) and (2) in the $\g=\SLA{sl}{2}$ case:

\begin{enumerate}
\item[(1)'] $C_{top}$ is a highest weight module.
\item[(2)'] We have a parabolic subalgebra 
$\gb \subsetneq \p\subseteq \g$
 with the Levi subalgebra $\gl\subset \p$ of $\p$
 such that $C_{top}$ is a simple weight $\g$-module parabolically induced from a cuspidal module $M$ over $\gl$, namely,
 $$
 C_{top}\cong (U(\g)\otimes_{U(\p)}M)/J,
 $$
 where the nilradical of $\p$ acts trivially on $M$ and $J$ is the maximal proper submodule.
\end{enumerate}
In both cases, we have contradictions, and therefore \eqref{eqn:different} splits,
in a similar way to the case of $\g=\SLA{sl}{2}$.
We just give a brief remark on (2)', the twisted localization
functor for $\gl$ is also constructed in \cite{MatCla00} with mutually commuting nilpotent elements
$f^{(1)},\ldots, f^{(\ell)}\in \gl$, where $\ell$ is the rank of the
semisimple Lie algebra $[\gl,\gl]$.
\end{proof}

We return to the case of $\g=sl_2$ and proceed to
prove \eqref{eqn:rel3}--\eqref{eqn:rel5}.

\begin{lemma}\label{lem:delta}
Let $r,r',s,s'$ be integers such that $1\leq r,r'\leq u-1$, $0\leq s,s'\leq v-1$.
Then,
$$
\Delta_{r,s}=\Delta_{r',s'}\Longleftrightarrow
(r',s')=(r,s)\quad \mbox{or}\quad (r',s')=(u-r,v-s).
$$ 
\end{lemma}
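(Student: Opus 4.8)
The plan is to run an elementary number-theoretic argument using the second expression for $\Delta_{r,s}$ in \eqref{eq:DefDelta}, namely $\Delta_{r,s}=\frac{(vr-us)^2-v^2}{4uv}$, which makes the symmetry transparent and keeps everything inside the integers. For the implication ``$\Leftarrow$'' the case $(r',s')=(r,s)$ is immediate, and for $(r',s')=(u-r,v-s)$ I would simply observe that $v(u-r)-u(v-s)=-(vr-us)$, so that squaring gives $\bigl(v(u-r)-u(v-s)\bigr)^2=(vr-us)^2$ and hence $\Delta_{u-r,v-s}=\Delta_{r,s}$.

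For ``$\Rightarrow$'', assume $\Delta_{r,s}=\Delta_{r',s'}$. Clearing the common denominator $4uv$ yields $(vr-us)^2=(vr'-us')^2$, hence $vr-us=\varepsilon(vr'-us')$ for some $\varepsilon\in\{+1,-1\}$. If $\varepsilon=+1$ then $v(r-r')=u(s-s')$; since $\gcd(u,v)=1$ this forces $u\mid(r-r')$, and because $|r-r'|\le u-2<u$ we get $r=r'$, after which $u(s-s')=0$ gives $s=s'$. If $\varepsilon=-1$ then $v(r+r')=u(s+s')$; again $u\mid(r+r')$, and since $2\le r+r'\le 2u-2$ with $u\ge 2$, the only multiple of $u$ in this range is $u$ itself, so $r+r'=u$; substituting back, $vu=u(s+s')$ gives $s+s'=v$, i.e. $(r',s')=(u-r,v-s)$.

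There is no real obstacle here; the only point needing a little care is the range bookkeeping in the $\varepsilon=-1$ step, namely that $u$ is the unique multiple of $u$ lying in $[2,2u-2]$, which uses $u\ge 2$. I would also remark that when $v=1$ the second alternative is vacuous among admissible indices: in that case $s=s'=0$, so $v(r+r')=u(s+s')=0$ has no solution with $r,r'\ge 1$, which is consistent with the statement, where $(u-r,v-s)=(u-r,1)$ is then not a valid index pair.
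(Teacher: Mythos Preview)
Your proof is correct and follows essentially the same approach as the paper's: both reduce $\Delta_{r,s}=\Delta_{r',s'}$ to $(vr-us)=\pm(vr'-us')$ and then use the coprimality of $u$ and $v$ together with the index bounds to force the two cases. The only cosmetic difference is that the paper applies the divisibility/range argument on the $s$-variables (using $|s-s'|\le v-1$ and $0\le s+s'\le 2v-2$), whereas you apply it on the $r$-variables; the arguments are symmetric and equally valid.
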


(If $s=0$ or $s'=0$, then the condition after ``or" is never satisfied.)

\begin{proof}
\begin{align*}
&\Delta_{r,s}=\Delta_{r',s'}
\Longleftrightarrow
(r-ts)^2=(r'-ts)^2
\Longleftrightarrow
r-\frac uv s=r'-\frac uv s' \quad\mbox{or}\quad
r-\frac uv s=-r'+\frac uv s',
\end{align*}
and the last condition is equivalent to either
\begin{align}
&r-r'-\frac uv(s-s')=0,\label{eqn:deltaequal1}\\
&r+r'-\frac uv(s+s')=0.\label{eqn:deltaequal2}
\end{align}
Since $-v+1\leq s-s'\leq v-1$, the condition
\eqref{eqn:deltaequal1} forces $s=s'$.
Moreover, by $0\leq s+s'\leq 2v-2$, we see that
\eqref{eqn:deltaequal2} implies $s+s'=0,v$.
The rest is easy.
\end{proof}

\begin{proof}[Proof of \eqref{eqn:rel3}--\eqref{eqn:rel5}]
By \Cref{prop:relext} (2) and the above Lemma, it suffices to consider
$(r',s')=(r,s)$ or $(u-r,v-s)$.
By \Cref{prop:relext} (1), we only need to consider the extensions in top spaces.
Since the Casimir operator acts as scalar multiples on $Z(\savoa k\g)$-modules by \Cref{rem:centralchar}, we see that 
all cases other than $\Ext^1(D^+_{r,s},D^-_{u-r,v-s})$ and $\Ext^1(D^-_{r,s},D^+_{u-r,v-s})$ are zero.
Moreover, they are known to be one-dimensional.
Thus, we have proved \eqref{eqn:rel3}--\eqref{eqn:rel5}.
\end{proof}

\subsection{Blocks}
Let $\g=sl_2$ and $k$ an admissible level.
By the above results, we have the block decomposition of $\sWtpg k$.
\begin{enumerate}
\item
Atypical blocks: $[\mc L_r]$ ($1\leq r\leq u-1$), $[\mc D^+_{r,s}]$
($1\leq r\leq u-1,1\leq s\leq v-1$).
\item
Typical blocks: $[\mc E_{\lam,\Delta_{r,s}}]$ ($(r,s)\in I_{u,v}$, $\lam\not\in (\lam_{r,s}+Q)\cup(\lam_{u-r,v-s}+Q)$).
\item
The module $\mc D^-_{r,s}$ ($1\leq r\leq u-1, 1\leq s\leq v-1$)
belongs to the block $[\mc D^+_{u-r,v-s}]$.
\end{enumerate}

\section{The category of weight modules}
\label{sec:catwt}

\subsection{Criteria on extensions}

Let $\mathfrak{g}$ be a simple Lie superalgebra and $\widehat{\mathfrak{g}}$ be its affinization. 
Let $M$ be an indecomposable module 
in $\bigWt{k}$
so that there exists $h_M \in \mathbb C$ and $\mu_M \in \mathfrak{h}^*$, such that
\begin{equation}
M = \bigoplus_{\substack{ \lambda \in \mu_M + Q \\ \Delta\in h_M + \mathbb Z}}  M_{\lambda, \Delta}
\end{equation}
Let $\Pi = \{ \alpha_1, \dots, \alpha_n\}$ be a set of simple positive roots for $\mathfrak{g}$ and $\Delta_+$ the corresponding set of positive roots. Denote by $\theta$ the longest positive root. 
We say that $M$ is {\em{of type $\Pi$}} if for every $\Delta$ in $h_M + \mathbb Z$ there exists a $\mu(\Delta)$, such that $M_{\mu(\Delta), \Delta} \neq 0$, but 
$M_{\mu(\Delta)-\beta, \Delta} = 0$ for $\beta$ in the cone $C(\Pi) = \mathbb Z_{\geq 0} \alpha_1 \oplus \dots \oplus  \mathbb Z_{\geq 0} \alpha_n \setminus \{ 0\}\subset Q$. 
(See \Cref{fig:pi-type}.)
By definition, a type $\Pi$ object in $\bigWt{k}$ never belongs to $\bigWtpg{k}$.

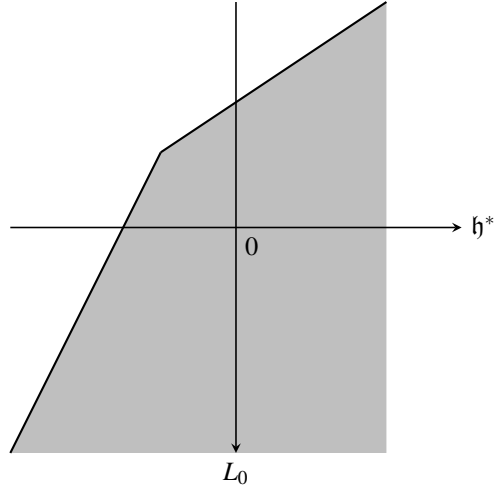
\begin{figure}
\begin{tikzpicture}
\coordinate (A) at (2,3);
\coordinate (B) at (-1,1);
\coordinate (C) at (-3,-3);
 \fill[lightgray] (A)--(B)--(C)--(2,-3)--(A);
 \draw[semithick,->,>=stealth] (-3,0)--(3,0) node[right] {$\h^*$}; 
 \draw[semithick,->,>=stealth] (0,3)--(0,-3) node[below] {$L_0$}; 
 \draw (0,0) node[below right]{$0$}; 
 \draw[thick](A)--(B);
 \draw[thick](B)--(C);
\end{tikzpicture}
 \caption{A $\Pi$-type module, where $\Pi=\{\alpha\}$. The weight support is indicated in gray color.}
 \label{fig:pi-type}
 \end{figure}

A trivial criterion for vanishing extensions is
\begin{prop}
For two indecomposable objects $M, N$ and  in $\mathcal C$ and $n \in \mathbb Z_{\geq 0}$ one has $\Ext^n(M, N) \neq 0$ implies $\mu_N = \mu_M \mod Q$ and $h_M = h_N \mod \mathbb Z$. 
\end{prop}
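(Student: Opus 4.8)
The plan is to exhibit the ambient category as a direct sum of abelian subcategories indexed by $\h^*/Q \times \C/\ZZ$, to observe that an indecomposable object necessarily lives in a single summand — the one labelled by the pair attached to $(\mu_M,h_M)$ — and then to invoke the standard fact that $\Ext$-groups between objects of distinct summands of a direct sum of abelian categories vanish in every nonnegative degree.

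First I would set up this ``sector decomposition''. For $M$ in $\bigWt{k}$ write $M = \bigoplus_{\lambda,\Delta} M_{\lambda,\Delta}$, where $M_{\lambda,\Delta}$ is the intersection of the $\h$-weight space of weight $\lambda$ with the generalized $L_0$-eigenspace of eigenvalue $\Delta$. Since $[h, x\otimes t^j] = \alpha(h)\, x\otimes t^j$ for $h \in \h$, $x \in \g_\alpha$, and $[L_0, x\otimes t^j] = -j\, x\otimes t^j$, the operator $x \otimes t^j$ sends $M_{\lambda,\Delta}$ into $M_{\lambda + \alpha, \Delta - j}$; as $\alpha \in Q$ and $j \in \ZZ$, for each pair of cosets $(\bar\mu,\bar h)$ the subspace $M^{\bar\mu,\bar h} := \bigoplus_{\lambda \in \bar\mu,\ \Delta \in \bar h} M_{\lambda,\Delta}$ is an $\ag$-submodule and $M = \bigoplus_{\bar\mu,\bar h} M^{\bar\mu,\bar h}$. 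One checks that $M \mapsto M^{\bar\mu,\bar h}$ is an exact functor with values in $\bigWt{k}$ — the summand is finitely generated, being generated by the sector-components of any finite generating set of $M$ — so that, writing $\mathcal{C}^{\bar\mu,\bar h}$ for the full subcategory of objects supported on the sector $(\bar\mu,\bar h)$, we get $\bigWt{k} = \bigoplus_{\bar\mu,\bar h} \mathcal{C}^{\bar\mu,\bar h}$. An indecomposable $M$ then lies in exactly one summand, and its label is the pair $(\mu_M + Q,\, h_M + \ZZ)$ of the statement.

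Granting this, suppose $\mu_M \not\equiv \mu_N \pmod{Q}$ or $h_M \not\equiv h_N \pmod{\ZZ}$. For $n = 0$ the claim is immediate, a morphism between objects of distinct summands being zero. For $n \geq 1$, represent a class of $\Ext^n(M,N)$ by an exact sequence $0 \to N \to E_{n-1} \to \cdots \to E_0 \to M \to 0$; applying the exact projections onto the sectors decomposes this sequence, compatibly with all differentials, as a direct sum of exact complexes — one in which only the $N$-term is possibly nonzero, one in which only the $M$-term is possibly nonzero, and ones with both end terms zero — each of which represents the zero Yoneda class. Hence the class is $0$, so $\Ext^n(M,N) = 0$. (For $n = 1$ this is the concrete assertion that decomposing the middle term of $0 \to N \to E \to M \to 0$ into sectors forces $E \cong N \oplus M$ with the canonical inclusion and projection, i.e. the extension splits.) The contrapositive is the proposition.

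I do not expect a genuine obstacle: this is the ``trivial'' criterion the name advertises. The only two points requiring a line of care are that the $\ag$-action respects the \emph{generalized} $L_0$-eigenspace decomposition taken modulo $\ZZ$ — which is exactly what the identity $[L_0, x\otimes t^j] = -j\, x \otimes t^j$ gives, once one notes that by definition objects of $\bigWt{k}$ carry a conformal grading, so $L_0$ acts with integer-spaced generalized eigenvalues on each of them — and the elementary homological bookkeeping that a direct sum of exact sequences, in which one of the two end terms of a summand vanishes, represents the trivial element of the Yoneda $\Ext$ group.
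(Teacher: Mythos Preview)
The paper does not actually prove this proposition; it merely introduces it with the words ``A trivial criterion for vanishing extensions is'' and moves on. Your proposal supplies precisely the argument that the authors are tacitly invoking: the decomposition of $\bigWt{k}$ into a product of abelian subcategories indexed by $\h^*/Q \times \C/\ZZ$, the observation that an indecomposable object lives in a single summand, and the standard fact that Yoneda $\Ext$ between different summands vanishes. This is the natural---and essentially the only---way to justify the statement, so your approach and the paper's intended (unwritten) one coincide.

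One small remark on exposition: for $n\ge 2$ your sentence ``each of which represents the zero Yoneda class, hence the class is $0$'' is slightly telegraphic. The summand $\xi^{s_M}$ lies in $\Ext^n(M,0)$ and $\xi^{s_N}$ in $\Ext^n(0,N)$, not in $\Ext^n(M,N)$, so one more line is needed to conclude: for instance, push $\xi^{s_M}\in\Ext^n(M,0)=0$ forward along $0\hookrightarrow N$ and check via the inclusion of complexes $\xi^{s_M}_*\hookrightarrow\xi$ that the result is Yoneda-equivalent to $\xi$. This is routine and does not affect correctness.
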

Here $\Ext^0 = \Hom$. We derive a new criterion now.

Let $k$ be an admissible level and $\g=\SLA{sl}{2}$.

\begin{lemma}
Let $N$ be a simple module in $\sWtsl{k}$ 
of $\Pi$-type.
Let $h$ be a complex number and $\lam$ an element of $\h^*$.
Then there is $\mu\in\lam+Q$ such that
$N_{\mu-n\alpha,h-m}=0$ for any $n,m\geq 0$.
\end{lemma}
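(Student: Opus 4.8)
The plan is to transport the question along a spectral‑flow twist into the lower‑bounded category, where the vanishing of weight spaces below a fixed conformal weight is forced by the lower bound on conformal weights, and then to read the resulting estimate back on $N$.

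By \Cref{spec-flow} choose $\ell\in\ZZ$ with $N':=\sfmod{\ell}{N}$ lower bounded, and let $h'$ denote its lowest conformal weight, so that $N'_{\nu,\Delta}=0$ whenever $\text{Re}(\Delta)<\text{Re}(h')$. Since $N$ is of $\Pi$-type it does not lie in $\sWtpgsl{k}$, so $\ell\ne0$; in fact $\ell>0$. Indeed, $N'$ is simple and lower bounded, hence by \Cref{rhwsimples} it is one of $\slirr r$, $\sldis{r,s}^\pm$, $\slindrel{r,s}^\pm$, whose conformal‑weight supports are explicit; recalling that under $\sigma^*_\ell$ a vector of weight $\nu$ and conformal weight $\Delta$ becomes a vector of weight $\nu+\ell k$ and conformal weight $\Delta+\tfrac{\ell\nu}{2}+\tfrac{\ell^2 k}{4}$ (\cite{CreMod13}), together with the spectral‑flow identifications of \Cref{twistRules}, a short case check shows that $\sfmod{m}{N'}$ can be of $\Pi$-type only when $m<0$. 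Applied to $N=\sfmod{-\ell}{N'}$ this gives $-\ell<0$.

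Now fix any weight $\nu$ and any $m\in\ZZ_{\ge0}$ with $N_{\nu,h-m}\ne0$. Applying $\sigma^*_\ell$ to a nonzero vector there produces a nonzero vector of $N'$ of conformal weight $(h-m)+\tfrac{\ell\nu}{2}+\tfrac{\ell^2 k}{4}$, whence $\text{Re}\bigl((h-m)+\tfrac{\ell\nu}{2}+\tfrac{\ell^2 k}{4}\bigr)\ge\text{Re}(h')$. Since $m\ge0$ and $\ell>0$ this rearranges to
\[
\text{Re}(\nu)\ \ge\ \frac{2}{\ell}\bigl(\text{Re}(h')-\text{Re}(h)\bigr)-\frac{\ell}{2}\,\text{Re}(k)\ =:\ C ,
\]
a bound independent of $\nu$ and $m$. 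Finally pick $\mu\in\lambda+Q$ with $\text{Re}(\mu)<C$, which is possible because $\text{Re}$ is unbounded below on the coset $\lambda+Q=\lambda+\ZZ\alpha$. Then for every $n\ge0$ we have $\text{Re}(\mu-n\alpha)=\text{Re}(\mu)-2n\le\text{Re}(\mu)<C$, so $N_{\mu-n\alpha,h-m}=0$ for all $n,m\ge0$, which is the assertion.

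The only delicate point is the sign $\ell>0$: this is the sole place where the classification of simple lower‑bounded modules and the explicit spectral‑flow rules are invoked, and it cannot be dropped---had $\ell$ been negative the transported inequality would bound the relevant weights from above rather than from below, and no such corner $\mu$ would exist. Everything else is a one‑line estimate.
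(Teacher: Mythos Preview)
Your argument is correct and conceptually cleaner than the paper's. The paper simply lists the three families of simple $\Pi$-type modules
\[
\sigma^\ell(\sldis{r,s}^+)\ (s\le v-2,\ \ell\le -2),\qquad
\sigma^\ell(\sldis{r,v-1}^+)\ (\ell\le -3),\qquad
\sigma^\ell(\slrel{\lambda}{r,s})\ (\ell\le -1),
\]
and asserts that each visibly has the required vanishing corner. Your approach instead explains \emph{why}: a $\Pi$-type module becomes lower bounded after a positive spectral flow $\sigma^\ell$, and the formula $h\mapsto h+\tfrac{\ell\nu}{2}+\tfrac{\ell^2 k}{4}$ then converts the lower bound on conformal weights of $N'$ into a uniform lower bound $\text{Re}(\nu)\ge C$ on the real parts of weights occurring at conformal levels $h-m$, $m\ge0$. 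Choosing $\mu\in\lambda+Q$ with $\text{Re}(\mu)<C$ finishes it. This is a genuinely different packaging: the paper relies on knowing the explicit weight/conformal-weight shapes of the three families, whereas you isolate the single structural fact that matters (the sign $\ell>0$) and derive the rest by a one-line estimate. Your version would transport more readily to higher rank.

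Two small remarks. First, in your list of simple lower-bounded modules you wrote $\slindrel{r,s}^\pm$; those are the reducible length-two modules, and you mean the simple $\slrel{\lambda}{r,s}$. Second, your ``short case check'' that $\sigma^m(N')$ is of $\Pi$-type only for $m<0$ is in fact exactly the classification the paper invokes in its proof---so both arguments lean on the same input, just at different moments.
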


\begin{proof}
The module $N$ is isomorphic to one of the following:
\begin{align*}
&\sigma^\ell(\mc{D}^{{+}}_{r,s}), \quad 1\leq r\leq u-1,1\leq s\leq v-2,\ell\leq -2,\\
&\sigma^\ell(\mc{D}^{{+}}_{r,v-1}),\quad 1\leq r\leq u-1,\ell\leq -3,\\
&\sigma^\ell(\cE_{\lam;\Delta_{r,s}}),\quad 
1\leq r\leq u-1,1\leq s\leq v-1,\ell\leq -1, {
\lam\neq \lam_{r,s},\lam_{u-r,v-s}
}.
\end{align*}
They all have the desired property.
\end{proof}

\begin{theorem}\label{thm:extaaa}
Let $M$ be either
$$
\mc{D}^+_{r,s},\quad \cE_{\lam;\Delta_{r,s}},\quad \cE_{r,s}^-
$$ 
{\rm(}$1\leq r\leq u-1$, $1\leq s\leq v-1$, $\lam\neq \lam_{r,s},\lam_{u-r,v-s}${\rm)}
and $N$ simple and of $\Pi$-type.
Then $\mathrm{Ext}^1(M,N)=0$.
\end{theorem}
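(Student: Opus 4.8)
The plan is to show that any short exact sequence $0 \to N \to E \to M \to 0$ with $M$ as in the list and $N$ simple of $\Pi$-type must split. The guiding principle is a weight-support argument: $M$ is a lower-bounded (relaxed-highest weight) module, while $N$, being of $\Pi$-type, has ``missing'' weight spaces in certain directions at \emph{every} conformal-weight level. First I would invoke \Cref{prop:relext}(2) to reduce to the case where $M$ and $N$ share the same conformal-weight coset, i.e. $h_M \equiv h_N \bmod \mathbb Z$, since otherwise there is nothing to prove; and by the trivial criterion we may also assume $\mu_M \equiv \mu_N \bmod Q$. So from now on $E \in \sWtsl{k}$ lives in a single $(\mathfrak h^* / Q) \times (\mathbb C/\mathbb Z)$-block.

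The key step is to exploit the $\Pi$-type structure of $N$ together with the preceding Lemma. That Lemma produces, for the relevant $\lambda$ and $h$, an element $\mu \in \lambda + Q$ with $N_{\mu - n\alpha,\, h-m} = 0$ for all $n,m \geq 0$. I would apply it with $h = h_M$ and with $\lambda = \mu_M$ (or with $\mu_M$ adjusted so that $M$ has a relaxed-highest weight vector at weight $\mu$ in its top space). The point is then: $M$ is generated by a relaxed-highest weight vector $v$ lying in its top conformal-weight space $M_{top}$, which is a cuspidal or (lowest-/highest-)weight $\SLA{sl}{2}$-module; lift $v$ to $\tilde v \in E$ at the same conformal weight $h_M$. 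Because $N_{\mu - n\alpha,\, h_M - m} = 0$ in precisely the cone directions, the vectors $f^{n}_0 \tilde v$ and the positive-mode descendants of $\tilde v$ cannot hit $N$: the component of any such vector in $N$ would have to sit in a weight space that the Lemma guarantees is zero. More precisely, I would argue that the submodule $\langle \tilde v\rangle \subseteq E$ maps isomorphically onto $M$: surjectivity is clear since $v$ generates $M$, and injectivity follows because the kernel is a submodule of $N$ meeting the forbidden weight cone, hence (using that $N$ is simple and of $\Pi$-type, so every nonzero submodule — namely $N$ itself — has weights in that cone) the kernel is $0$. That splits the sequence.

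I would handle the three shapes of $M$ uniformly: for $M = \mc D^+_{r,s}$ the top space is an honest highest-weight $\SLA{sl}2$-module, for $M = \cE_{\lambda;\Delta_{r,s}}$ it is a dense (cuspidal) module, and for $M = \cE^-_{r,s}$ it is the relaxed module $R^-$ sitting in $0 \to D^-_{\cdots} \to R^-_{\cdots} \to D^+_{\cdots} \to 0$; in each case $M$ is generated by finitely many relaxed-highest weight vectors in $M_{top}$, and one applies the Lemma to each generating weight $\mu_i$ in turn, or passes to the conjugate module to reduce $\cE^-$ to $\cE^+$-type data. The finite-length hypothesis on everything in $\sWtsl{k}$ (established earlier) guarantees the relevant submodule-of-$N$ arguments terminate.

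\textbf{Main obstacle.} The delicate point is bookkeeping the weight cones: one must check that the ``missing'' directions supplied by the Lemma are exactly the directions needed to trap all of $N$ inside a zero weight space — i.e. that the cone $C(\Pi)$ relative to $N$'s $\Pi$-type structure is compatible with the cone of descendants of the relaxed-highest weight generator of $M$. Because $N$ is obtained from $\mc D^+$ or $\cE$ by negative spectral flow $\sigma^\ell$, its $\Pi$-direction is the image of a genuine lowering direction, and one must verify that lifting the generator of $M$ and acting by $\widehat{\SLA{sl}2}$ produces only weights in $\mu_M + C(\Pi) \cup \{\mu_M\}$ at the top level and strictly lower conformal weights below — so the intersection with $N$'s support is forced to be empty. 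Making this precise (rather than hand-wavy) is where the real work lies; the splitting itself is then formal.
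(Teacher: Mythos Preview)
Your strategy matches the paper's: lift a top-level generator of $M$ to $\tilde v\in E$, invoke the preceding Lemma to produce $\mu$ with $N_{\mu-n\alpha,\Delta-m}=0$ for all $n,m\ge 0$, and show that $\langle\tilde v\rangle\cap N=0$. But the injectivity step as you wrote it does not go through. The sentence ``the kernel is a submodule of $N$ meeting the forbidden weight cone'' is self-contradictory (nothing in $N$ lies in that cone, by definition), and the observation that $f_0^n\tilde v$ and single positive-mode descendants $z_{(m)}\tilde v$ land in the vanishing cone is not by itself enough to conclude $\langle\tilde v\rangle\cap N=0$: longer PBW words can first raise conformal weight (negative modes) and then lower it, and zero-mode words in the $e$-direction leave the cone altogether.

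The correct mechanism, which is exactly the ``bookkeeping'' you flag as the main obstacle and which the paper carries out explicitly, is a contradiction argument using the $\Pi$-type hypothesis in the \emph{outgoing} direction. If the kernel is nonzero it equals $N$ (simplicity), so $\langle\tilde v\rangle=\tilde M$. Since $N$ is of $\Pi$-type it has a nonzero weight vector $w$ at conformal weight $\Delta-1$; write $w=x\tilde v$ and expand $x$ into PBW monomials ordered with positive modes on the right. The net conformal degree $-1$ forces at least one positive mode $z_{(m_u)}$ ($m_u>0$) to act first, and $z_{(m_u)}\tilde v$ has conformal weight $\Delta-m_u$ and $h_0$-weight in $\{\mu,\mu-\alpha,\mu-2\alpha\}$; it therefore lies in $N$ (its image in $M$ is zero) and in the vanishing cone, so $z_{(m_u)}\tilde v=0$. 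Hence $w=0$, a contradiction. This is the precise content you need to fill the gap; the rest of your outline is fine.
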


\begin{proof}
Let $\Delta$ be the lowest conformal weight of $M$.
Suppose on the contrary that there is a non-split exact sequence
$$
0\ra N\ra\tilde M\ra M\ra0
$$
with a module $\tilde M$.
By the conditions, $\tilde M$ is generated by a single vector $v\in \tilde M_{\mu-\alpha,\Delta}$
for some $\mu\in\h^*$ such that $N_{\mu-n\alpha,\Delta-m}=0$
for all $n,m\geq 0$.
Since $N$ is of $\Pi$-type, we have a non-zero weight vector $w$ of $N_{\Delta-1}$.
Then we have  $x\in U(\ag)$ such that $w=xv$.
(See \Cref{fig:vw}.)
By the PBW theorem, we see that $w$ is the sum of the PBW monomials of the form
$$
x^1_{(-n_1)}\ldots x^r_{(-n_r)}.y^1_{(0)}\ldots y^s_{(0)}.
z^1_{(m_1)}\ldots z^u_{(m_u)}v
$$
with $r,s,u\geq 0$, $x^i,y^j,z^l\in\{e,h,f\}$, $n_i,m_j>0$ such that
$-\sum_{i=1}^r n_i+\sum_{j=1}^u m_j=1$.
Here, we set $x_{(n)}=xt^n$ for any $x\in\g$ and $n\in\ZZ$.
Since $-\sum_{i=1}^r n_i+\sum_{j=1}^u m_j=1$, we see that $u>0$.
However, as we shall see below, $z^u_{(m_u)}v=0$, which contradicts to $w\neq0$.

First, $z^u_{(m_u)}v\in N$ since it is of conformal weight less
than $\Delta$, say, $\Delta-m_u$.
Second, since $N_{\mu-n\alpha,\Delta-m_u}=0$ for $n\geq0$ and the $h_0$-weight of $z^u_{(m_u)}v$ is either 
$\mu-\alpha,\mu-2\alpha$ or $\mu$,
we have $z^u_{(m_u)}v=0$.

Thus, we see that there is no such $\tilde M$.
\end{proof}

\begin{figure}
\begin{tikzpicture}
\coordinate (A) at (2+1/2,3);
\coordinate (B) at (1/2,-3);
\coordinate(C) at (-1+1/3,1/2);
\coordinate(D) at (2,-3);
\coordinate(v) at (-2,1/2);
\coordinate(mu-alpha) at (-2,0);
\coordinate(mu) at (-2+1/2+1/4,0);
\coordinate(delta) at (0,1/2);
\coordinate(w) at (2+1/3,1+1/2);
\coordinate(delta-1) at (0,1+1/2);
\coordinate(x) at (1,1+1/4);
 \fill[lightgray] (A)--(B)--(2+1/2,-3)--(A);
 \fill[lightgray] (C)--(D)--(-3,-3)--(-3,1/2)--(C); 
 \draw[semithick,->,>=stealth] (-3,0)--(3,0) node[right] {$\h^*$}; 
 \draw[semithick,->,>=stealth] (0,3)--(0,-3) node[below] {$L_0$}; 
 \draw[thick](A)--(B); 
 \fill (delta) circle (1pt) node[right]{$\Delta$}; 
 \fill (delta-1) circle (1pt) node[left]{$\Delta-1$};
\fill (mu-alpha) circle (1pt) node[below]{$\mu-\alpha$};
\fill (mu) circle (1pt) node[below]{$\mu$};
 \draw[thick](-3,1/2)--(C)--(D);
 \fill (v) circle (2pt) node[above]{$v$};
 \fill (w) circle (2pt) node[below]{$w$};
 \draw[dotted, semithick](v)--(delta);
 \draw[dotted, semithick](v)--(mu-alpha);
 \draw[dotted, semithick](w)--(delta-1);
 \draw[dashed, semithick,->>, >=stealth](v)--(w);
 \draw(x) node[below]{$x$};
 \draw(-2,-2) node[color=white]{$M$};
 \draw(2,-1) node[color=white]{$N$};
\end{tikzpicture}
 \caption{Picture for the proof of \Cref{thm:extaaa}, where $M=\sldis{r,s}^+$.}
 \label{fig:vw}
 \end{figure}

%

By considering the Weyl group action $\alpha\leftrightarrow-\alpha$, we have the following corollary.

\begin{cor}\label{cor:ext} Let $\Pi^\pm = \{ \pm \alpha\}$
\begin{enumerate}
\item $\Ext^1(\slrel{\lambda}{r,s}, M) = 0=\Ext^1(M,\slrel{\lambda}{r,s})$  for $\lambda \neq \lambda_{r, s}, \lambda_{u-r, v-s}$ and $M$ any simple module. 
\item $\Ext^1(\slindrel{r,s}^-, M) = 0$ for $M$ any simple module of $\Pi^+$-type in particular for $M = \sigma^{-\ell}(\sldis{r',s'}^+)$ 
with $r=1,\ldots,u-1$ and $s=1,\ldots, v-2$, $\ell \geq 2$
or $s=v-1$, $\ell\geq 3$.
\item $\Ext^1(\slindrel{r,s}^+, M) = 0$ for $M$ any simple module of $\Pi^-$-type in particular for $M = \sigma^{\ell}(\sldis{r',s'}^-)$ 
with $r=1,\ldots,u-1$ and $s=1,\ldots, v-2$, $\ell \geq 2$
or $s=v-1$, $\ell\geq 3$.
\item $\Ext^1(\sldis{r,s}^+, M) = 0$ for $M$ any simple module of $\Pi^+$-type in particular for $M = \sigma^{-\ell}(\sldis{r',s'}^+)$ 
with $r=1,\ldots,u-1$ and $s=1,\ldots, v-2$, $\ell \geq 2$
or $s=v-1$, $\ell\geq 3$.
\item $\Ext^1(\sldis{r,s}^-, M) = 0$ for $M$ any simple module of $\Pi^-$-type in particular for $M = \sigma^{\ell}(\sldis{r',s'}^-)$ 
with $r=1,\ldots,u-1$ and $s=1,\ldots, v-2$, $\ell \geq 2$
or $s=v-1$, $\ell\geq 3$.
\end{enumerate}
\end{cor}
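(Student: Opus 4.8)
The plan is to deduce all four parts formally from \Cref{thm:extaaa} by exploiting two exact autoequivalences of $\sWtsl{k}$ — the conjugation (Weyl reflection) functor $\conjmod{-}$ and the contragredient duality $(-)^\vee$ — together with the classification of simple objects. First I would record the book-keeping. By \Cref{spec-flow}, \Cref{twistRules} and the lemma preceding \Cref{thm:extaaa} (applied also to its conjugate), every simple object of $\sWtsl{k}$ either lies in $\sWtpgsl{k}$ (is lower bounded) or is of $\Pi^+$-type, or of $\Pi^-$-type, the latter two classes being precisely the negative, resp.\ positive, spectral-flow images of $\sldis{r',s'}^{\pm}$ ($1\le s'\le v-1$) and $\slrel{\lambda'}{r',s'}$ listed there. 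Moreover $\conjmod{-}$ interchanges $\sldis{r,s}^+\leftrightarrow\sldis{r,s}^-$ and $\slindrel{r,s}^+\leftrightarrow\slindrel{r,s}^-$, sends $\slrel{\lambda}{r,s}$ to $\slrel{-\lambda}{r,s}$, interchanges $\Pi^+$-type with $\Pi^-$-type and preserves $\Ext^1$; the contragredient $(-)^\vee$ has the same effect on these modules, preserves conformal weights, and satisfies $\Ext^1(M,N)\cong\Ext^1(N^\vee,M^\vee)$. Finally, $-\lambda_{r,s}\equiv\lambda_{u-r,v-s}$ and $-\lambda_{u-r,v-s}\equiv\lambda_{r,s}$ modulo the root lattice $Q=2\ZZ$ (using $tv=u$), so the hypothesis $\lambda\neq\lambda_{r,s},\lambda_{u-r,v-s}$ is stable under $\lambda\mapsto-\lambda$.

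With this in hand, parts (4) and (2) are immediate: they are the instances of \Cref{thm:extaaa} with $\Pi=\Pi^+$ and first argument $\sldis{r,s}^+=\mc{D}^+_{r,s}$, resp.\ $\slindrel{r,s}^-=\cE^-_{r,s}$, the second argument running over all simple $\Pi^+$-type modules (which, by the cited lemma, are exactly those in the ``in particular'' clauses). Parts (5) and (3) then follow by applying $\conjmod{-}$: for $M$ of $\Pi^-$-type, $\Ext^1(\sldis{r,s}^-,M)\cong\Ext^1(\sldis{r,s}^+,\conjmod{M})$ with $\conjmod{M}$ of $\Pi^+$-type, so it vanishes by (4); likewise (3) follows from (2) since $\conjmod{\slindrel{r,s}^+}=\slindrel{r,s}^-$.

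For part (1) I would split on the trichotomy for the variable simple module $M$. If $M$ is of $\Pi^{+}$-type then $\Ext^1(\slrel{\lambda}{r,s},M)=0$ is \Cref{thm:extaaa} with first argument $\slrel{\lambda}{r,s}=\cE_{\lambda;\Delta_{r,s}}$; if $M$ is of $\Pi^-$-type one first passes to $\conjmod{-}$, writing $\slrel{\lambda}{r,s}=\conjmod{\slrel{-\lambda}{r,s}}$ and using that $-\lambda$ also satisfies the hypothesis. For the opposite order I would use $\Ext^1(M,\slrel{\lambda}{r,s})\cong\Ext^1(\slrel{\lambda}{r,s}^\vee,M^\vee)=\Ext^1(\slrel{-\lambda}{r,s},M^\vee)$, where $-\lambda\neq\lambda_{r,s},\lambda_{u-r,v-s}$ by the congruences above and $M^\vee$ is again simple of $\Pi^{+}$- or $\Pi^-$-type (the contragredient intertwines spectral flow with its inverse and sends $\sldis{r,s}^{\pm}\mapsto\sldis{r,s}^{\mp}$), reducing to the case just done. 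If instead $M$ is lower bounded, then in any extension involving $\slrel{\lambda}{r,s}$ and $M$ (in either direction) both the sub and the quotient lie in $\sWtpgsl{k}$, and since the generalized $L_0$-spectrum of an extension is contained in the union of those of its sub and quotient, the middle term is again lower bounded; hence the extension already lives in $\sWtpgsl{k}$ and vanishes by the block decomposition of $\sWtpgsl{k}$ from \Cref{sec:catlb}, i.e.\ by \eqref{eqn:rel1}--\eqref{eqn:rel5} — the only remaining point being an extension between $\slrel{\lambda}{r,s}$ and another dense module $\slrel{\mu}{r',s'}$, which by \Cref{prop:relext} and \Cref{lem:delta} reduces either to a weight-coset obstruction or to a self-extension of a simple dense $\SLA{sl}{2}$-module, both impossible.

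The genuine content is entirely carried by \Cref{thm:extaaa}; the only thing here that needs care is the \emph{book-keeping} in the first paragraph — verifying the trichotomy of simple objects and the precise action of $\conjmod{-}$ and $(-)^\vee$ on $\sldis{r,s}^\pm$, $\slindrel{r,s}^\pm$, $\slrel{\lambda}{r,s}$ and on $\Pi^\pm$-type — which is routine from the explicit descriptions in \Cref{subsecsl2basics} and the formula \eqref{spectralflow}.
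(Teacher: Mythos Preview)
Your proposal is correct and follows essentially the same strategy as the paper: reduce to \Cref{thm:extaaa} via the Weyl reflection (what the paper calls ``replacing the Borel subalgebra'') and contragredient duality, handling the lower-bounded case for part~(1) by invoking the already-established Ext vanishing in $\sWtpgsl{k}$. The paper's own proof is a one-paragraph sketch that treats only part~(1) and says ``the rest is shown in a similar way''; you have simply written out what that similar way is, and your extra care with the dense-versus-dense case (not literally stated in \eqref{eqn:rel5}) is a welcome clarification rather than a deviation.
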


\begin{proof}
We only show (1). By using the dual operator $\vee$, we only need to show the first equality. If $M$ is lower bounded, it follows by \eqref{eqn:rel5}.
If  $M$ is not lower bounded, $M$ is of $\Pi^+$- or $\Pi^-$-type.
If $M$ is of $\Pi^+$-type, the assertion is nothing but \Cref{thm:extaaa}.
Otherwise, by replacing the Borel subalgebra, we can apply \Cref{thm:extaaa} as well. The rest is shown in a similar way.
\end{proof}

We have the following corollary on the extensions between
irreducible modules.

\begin{corollary}\label{cor:simples}
Let $M,N$ be simple objects in 
$\sWtsl{k}$.

\begin{enumerate}
\item  If $\Ext^1(N,M)\neq0$, then the following condition $(*)$ holds:
 
 $(*)$: There are $\ell\in\ZZ$, 
$1\leq r\leq u-1$, $1\leq s\leq v-1$ and $\varepsilon\in\{+,-\}$ such that
$M\cong\sigma^\ell(\mc{D}^\varepsilon_{r,s})$, 
$N\cong\sigma^\ell(\mc{D}^{-\varepsilon}_{u-r,v-s})$.

\item If $(*)$ holds, then $\Ext^1(N,M)=\C$ and the non-zero element is isomorphic to 
 $\sigma^\ell(\cE^{\varepsilon}_{r,s})$.
\end{enumerate}
\end{corollary}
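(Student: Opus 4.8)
The plan is to leverage everything already set up for $\g=\SLA{sl}{2}$: the spectral-flow reduction (\Cref{spec-flow} together with the identifications of \Cref{twistRules}), the classification \Cref{rhwsimples}, the complete description of the lower-bounded category $\sWtpgsl k$ obtained in \Cref{sec:catlb} (its blocks, i.e.\ \eqref{eqn:rel1}--\eqref{eqn:rel5}, and the extensions produced in \Cref{existelle}), and the vanishing criteria \Cref{thm:extaaa} and \Cref{cor:ext}. I will use throughout that spectral flow $\sfmod{\ell}{-}$, conjugation $\conjaut$ and restricted duality $(-)^{\vee}$ are exact (anti)autoequivalences of $\sWtsl k$, hence permute simple objects and blocks and preserve $\dim\Ext^1$, and that an extension of a lower-bounded module by a lower-bounded module is again lower-bounded, so that for lower-bounded $M,N$ one may compute $\Ext^1$ inside $\sWtpgsl k$.

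For (1), suppose $\Ext^1(N,M)\neq 0$. First I would normalise $M$. By \Cref{spec-flow} there is a twist $\sfmod{\ell}{-}$ with $\sfmod{\ell}{M}$ lower-bounded; replacing $(N,M)$ by $(\sfmod{\ell}{N},\sfmod{\ell}{M})$ we may assume $M$ is a simple lower-bounded module, hence by \Cref{rhwsimples} one of $\slirr r$, $\sldis{r,s}^{\pm}$, $\slrel{\lambda}{r,s}$. The relaxed case is impossible by \Cref{cor:ext}(1); the case $M=\slirr r$ is disposed of by one further twist using $\sfmod{}{\slirr{r}}\cong\sldis{u-r,v-1}^{+}$ (\Cref{twistRules})---and when $v=1$ the level is a non-negative integer, so $\sWtsl k$ is semisimple and there is nothing to prove---and finally, applying $\conjaut$ if necessary, we may assume
\[
M=\sldis{r,s}^{+},\qquad 1\le r\le u-1,\quad 1\le s\le v-1.
\]
The crux is then to show that $N$ too is lower-bounded. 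If not, $N$ is a simple module of $\Pi^{+}$- or $\Pi^{-}$-type; using $\Ext^1(N,M)\cong\Ext^1(M^{\vee},N^{\vee})$, where $M^{\vee}$ is a lower-bounded discrete module $\sldis{r',s'}^{-}$ and $N^{\vee}$ is again a simple module of $\Pi$-type, and combining \Cref{thm:extaaa} and \Cref{cor:ext} transported through $\conjaut$ and $(-)^{\vee}$ (and, where the discrete module ends up ``pointing the wrong way'', re-running the PBW computation from the proof of \Cref{thm:extaaa} in the opposite Borel) one gets $\Ext^1(M^{\vee},N^{\vee})=0$---a contradiction. Hence $M$ and $N$ are both lower-bounded, $\Ext^1(N,M)$ is computed in $\sWtpgsl k$, and the block structure of that category (\eqref{eqn:rel1}--\eqref{eqn:rel5}) forces $N\cong\sldis{u-r,v-s}^{-}$. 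Undoing the twists and conjugation yields $(*)$.

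For (2), assume $(*)$. Twisting by $\sfmod{-\ell}{-}$ reduces the assertion to $\Ext^1(\sldis{u-r,v-s}^{-\varepsilon},\sldis{r,s}^{\varepsilon})=\C$ with non-zero class $\sfmod{\ell}{\cE^{\varepsilon}_{r,s}}$; here $1\le u-r\le u-1$ and $1\le v-s\le v-1$. Since both modules are lower-bounded, this $\Ext^1$ is computed in $\sWtpgsl k$, where \eqref{eqn:rel3} gives exactly $\C$ for either sign $\varepsilon$. For the generator, \Cref{existelle} exhibits the non-split sequence
\[
\dses{\sfmod{\ell}{\sldis{r,s}^{\varepsilon}}}{\sfmod{\ell}{\slindrel{r,s}^{\varepsilon}}}{\sfmod{\ell}{\sldis{u-r,v-s}^{-\varepsilon}}},
\]
that is $\dses{M}{\sfmod{\ell}{\cE^{\varepsilon}_{r,s}}}{N}$ (recall $\slindrel{r,s}^{\varepsilon}=\cE^{\varepsilon}_{r,s}$); being non-split, and $\Ext^1(N,M)$ one-dimensional, $\sfmod{\ell}{\cE^{\varepsilon}_{r,s}}$ represents its unique non-zero class.

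The step I expect to be the real obstacle is the crux of (1): showing that the quotient $N$ cannot be of $\Pi$-type. This is the only place where the weight category genuinely exceeds the lower-bounded one, and it is where \Cref{thm:extaaa}---possibly supplemented by the same PBW argument applied to a discrete module in the opposite Borel, the configuration not directly covered by \Cref{cor:ext}---must carry the load, once the $\Ext^1$ has been transported by duality and conjugation into a shape to which these results apply. Everything else is a direct appeal to the already-established blocks of $\sWtpgsl k$ and to the spectral-flow dictionary.
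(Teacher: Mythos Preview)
Your overall strategy is the paper's: reduce via spectral flow, eliminate the dense $\cE$-types with \Cref{cor:ext}(1), and then appeal to the block structure of $\sWtpgsl{k}$. Part (2) is fine. But there is a genuine gap in part (1), exactly at the place you flag as the crux.

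You normalise the \emph{second} argument $M$ to $\sldis{r,s}^+$, whereas the paper normalises the \emph{first} argument $N$. This matters because \Cref{thm:extaaa} is asymmetric: it kills $\Ext^1(\text{lower-bounded},\,\Pi\text{-type})$ only when the orientations match ($\sldis{}^+$ with $\Pi^+$, and by Weyl reflection $\sldis{}^-$ with $\Pi^-$; that is precisely \Cref{cor:ext}(4),(5)). With your normalisation you must show $\Ext^1(N,\sldis{r,s}^+)=0$ for $N$ of $\Pi^\pm$-type. Duality and conjugation each either preserve both orientations or flip both simultaneously, so no composition of them converts the mismatched case ($\sldis{}^+$ against $\Pi^-$) into a matched one; ``re-running the PBW argument in the opposite Borel'' just reproduces \Cref{cor:ext}(5), which is still a matched statement and does not apply. (A side remark: the contragredient dual fixes simple $\sldis{r,s}^+$ up to isomorphism, as it preserves characters; it does not send $\sldis{}^+$ to $\sldis{}^-$.)

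The paper's device is a \emph{second spectral flow}. After normalising $N$ to $\sldis{r,s}^+$, if $\sigma^n(M)$ is of $\Pi^-$-type one twists again so that $\sigma^{m+n}(M)\cong\sldis{r',s'}^+$; then $\sigma^m(\sldis{r,s}^+)$ is either lower-bounded (done by \eqref{eqn:rel2}--\eqref{eqn:rel3}) or of $\Pi^+$-type, and in the latter case a single dualisation lands in the configuration $\Ext^1(\sldis{r',s'}^+,\ \Pi^+\text{-type})$, where \Cref{thm:extaaa} applies. Your argument can be repaired by the same trick (applied to $N$ rather than $M$), but as written the $\Pi^-$ case is not handled.
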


\begin{proof}
The second assertion (ii) is known.
Now, suppose that we have a non-split extension $\Ext^1(N,M)\neq 0$.
By \Cref{cor:ext} (1), we see that neither $M$ nor $N$ is not of $\cE$-type.
Then we have an integer $n\in\ZZ$ such that  $\sigma^n(N)\cong \cD^+_{r,s}$ with
$1\leq r\leq u-1$ and $1\leq s\leq v-1$.
Since $\Ext^1(N,M)\cong \Ext^1(\sigma^n(N),\sigma^n(M))$,
we have $\Ext^1(\mc{D}^+_{r,s},\sigma^n(M))\neq 0$.
Since $\sigma^n(M)$ is lower bounded or of $\Pi^\pm$-type,
we will split the proof in these 3 cases.

If $\sigma^n(M)$ is lower bounded, by \eqref{eqn:rel2} and \eqref{eqn:rel3},
we have $\sigma^n(M)\cong \sigma^n(\mc{D}^-_{u-r,v-s})$.
Therefore, we have $(*)$.

If $\sigma^n(M)$ is of $\Pi^+$-type, then \Cref{thm:extaaa} implies
$\Ext^1(N,M)=0$, which is a contradiction.

Suppose that $\sigma^n(M)$ is of $\Pi^-$-type.
We have $m<0$ such that $\sigma^m(\sigma^n(M))\cong \cD^+_{r',s'}$ with $1\leq r'\leq u-1$ and $1\leq s'\leq v-1$.
Then $\sigma^m(\mc{D}^+_{r,s})$ is either lower bounded or of type $\Pi^+$.
If $\sigma^m(\mc D^+_{r,s})$ is of lower bounded, then by \eqref{eqn:rel2} and \eqref{eqn:rel3}, we have $(*)$.
Suppose that $\sigma^m(\mc{D}^+_{r,s})$ is of $\Pi^+$-type. Since 
$\Ext^1(N,M)\cong \Ext^1(\sigma^{m+n}(N),\sigma^{m+n}(M))$, we have a non-split extension $\tilde M$ of the form
$$
0\ra\mc{D}^+_{r',s'}\ra \tilde M\ra \sigma^m(\mc{D}^+_{r,s})\ra0.
$$
By taking the dual  of the sequence, we have the non-split sequence
$$
0\ra\sigma^m(\mc{D}^+_{r,s})\ra \tilde M^\vee\ra \mc{D}^+_{r',s'}\ra0,
$$
which contradicts to \Cref{thm:extaaa}.
\end{proof}

\subsection{Blocks of $\sWtsl{k}$}

Let $\g=\SLA{sl}{2}$ and $k$ an admissible level.
By using \Cref{cor:simples}, we are ready to describe blocks of $\sWtsl{k}$. 

First, any spectral flow of each $\cE$-type irreducible module generates a block, called 
a typical block.
To be precise, let  $E^n_{r,s,\lam}$ with $n\in\ZZ$, $(r,s)\in I_{u,v}$ and 
$\lam\in \C/2\ZZ$ with $\lam\neq \lam_{r,s},\lam_{u-r,v-s}$ mod 2,
be the full subcategory of  $\sWtsl{k}$ 
 of all objects whose composition factors are isomorphic to
$\sigma^n\cE_{\lam;\Delta_{r,s}}$.
Then $E^n_{r,s,\lam}$ is a block of  $\sWtsl{k}$. 

We now introduce the rest of blocks, called the atypical blocks.
Since all spectral flows of $\mc E$-type modules are contained in typical blocks,
the irreducible modules in atypical blocks are atypical irreducible
modules, that is, the spectral flows of  $\mc D$-type
and $\mc L$-type modules.
It follows from \Cref{twistRules} that the inequivalent atypical irreducible modules are precisely
$\sigma^n(\mc D^+_{r,s})$ with $n\in\ZZ$, $1\leq r\leq u-1$
and $1\leq s\leq v-1$.

\begin{theorem}\label{atypicalblocks}
Each atypical block contains a unique highest weight module
and the set of all atypical blocks are parametrized by the highest weight module it contains.
In particular, there are precisely $(u-1)v$ different atypical blocks.
\end{theorem}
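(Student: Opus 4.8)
The plan is to reduce the statement to the block decomposition of $\sWtpgsl k$ obtained above, by feeding the extension computation of \Cref{cor:simples} into the spectral-flow identifications of \Cref{twistRules}.

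First I would record the relevant objects. By the discussion preceding the theorem the atypical simples are the pairwise non-isomorphic modules $\sfmod n{\sldis{r,s}^+}$ with $n\in\ZZ$, $1\le r\le u-1$, $1\le s\le v-1$. Since a highest-weight $\widehat{\mathfrak{sl}_2}$-module is lower bounded, a highest-weight module in $\sWtsl k$ already lies in $\sWtpgsl k$, so \Cref{rhwsimples} shows that the highest-weight ones among these atypical simples are exactly the $\slirr r$ ($1\le r\le u-1$) and the $\sldis{r,s}^+$ ($1\le r\le u-1$, $1\le s\le v-1$) — the $\sldis{r,s}^-$ have lowest-weight top space, the $\slrel{\lambda}{r,s}$ and $\slindrel{r,s}^\pm$ have dense top space, and semisimplicity of $\mathcal{O}_k$ leaves nothing else. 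Via the identification $\slirr r\cong\sfmod{-1}{\sldis{u-r,v-1}^+}$ this is a list of $(u-1)(v-1)+(u-1)=(u-1)v$ modules, each of which, being atypical, lies in some atypical block.

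Next I would describe the extension quiver. By \Cref{cor:simples} the only non-zero first extensions among atypical simples link $\sfmod n{\sldis{r,s}^+}$ with $\sfmod n{\sldis{u-r,v-s}^-}$ in both directions, and \Cref{twistRules} rewrites $\sfmod n{\sldis{u-r,v-s}^-}$ as $\sfmod{n-1}{\sldis{r,s-1}^+}$ when $2\le s\le v-1$ and as $\sfmod{n-2}{\sldis{u-r,v-1}^+}$ when $s=1$. Abbreviating $\sfmod n{\sldis{r,s}^+}$ by the triple $(n,r,s)$, each atypical simple has exactly two neighbours, $(n,r,s)\leftrightarrow(n-1,r,s-1)$ for $2\le s\le v-1$ and $(n,r,1)\leftrightarrow(n-2,u-r,v-1)$, so each atypical block is a bi-infinite chain. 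Following a chain, $s$ cycles through $1,\dots,v-1$ while $n$ is pushed up by $v$ and $r$ is swapped with $u-r$, and two such half-cycles return one to $r$ with $n$ increased by $2v$; hence the block of $(n_0,r_0,1)$ is $\{(n,r,s): r=r_0,\ n-s\equiv n_0-1\}\cup\{(n,r,s): r=u-r_0,\ n-s\equiv n_0+v-1\}$, congruences taken modulo $2v$.

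Finally I would match this with $\sWtpgsl k$. An extension of lower-bounded modules is lower bounded, so $\sWtpgsl k$ is closed under extensions in $\sWtsl k$ and $\Ext^1$ between lower-bounded modules agrees in the two categories; hence the lower-bounded simples inside an atypical block of $\sWtsl k$ form a union of atypical blocks of $\sWtpgsl k$. A direct but slightly delicate inspection of the congruences above — treating $s=1$ and $s=v-1$ separately and bookkeeping the residue of $n_0$ modulo $2v$ — shows that this union is a single $\sWtpgsl k$-block, namely $[\sldis{r,s}^+]=\{\sldis{r,s}^+,\sldis{u-r,v-s}^-\}$ or $[\slirr r]=\{\slirr r\}$, and that every atypical block of $\sWtsl k$ does contain a lower-bounded simple (for each residue of $n_0$ modulo $v$ the chain meets $\{n=0\}$ or $\{n=-1\}$ in a lower-bounded state). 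Each such $\sWtpgsl k$-block has a unique highest-weight simple — $\sldis{r,s}^+$, resp. $\slirr r$ — and highest-weight modules are lower bounded, so every atypical block of $\sWtsl k$ contains exactly one highest-weight module; since two atypical blocks sharing one coincide and every highest-weight module from the first step occurs, block $\mapsto$ highest-weight module is a bijection onto that $(u-1)v$-element set, which is the assertion, the count included. The one place that needs real work is this last step: the boundary values $s=1,v-1$ where \Cref{twistRules} changes form must be handled by hand, and one must check that the bi-infinite chain neither misses the lower-bounded region nor re-enters it outside a single $\sWtpgsl k$-block; everything else is formal once \Cref{cor:simples} and \Cref{twistRules} are in place.
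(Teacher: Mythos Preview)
Your proposal is correct and uses essentially the same approach as the paper: both arguments feed \Cref{cor:simples} into the spectral-flow identifications of \Cref{twistRules} to see that the atypical extension quiver is a disjoint union of bi-infinite chains, then locate a unique highest-weight simple on each chain. The paper organizes this by fixing the blocks $C_r$ containing $\sldis{r,v-1}^+$, explicitly listing the simples $L_0,L_1,\dots$ along the chain, and then reading off which of the spectral flows $\sigma^n(C_r)$ with $-1\le n\le v-2$ contains which highest-weight module; your congruence parametrization $(n,r,s)$ with $n-s\bmod 2v$ is an equivalent (and slightly more symmetric) repackaging of the same walk. The only place your sketch is genuinely thinner than the paper's argument is the ``direct but slightly delicate inspection'' that each chain meets the lower-bounded region in exactly one $\sWtpgsl k$-block --- the paper does this check explicitly rather than leave it to the reader --- but your description of which triples are lower-bounded and how the boundary cases $s=1,v-1$ behave is correct, so this is a matter of exposition rather than a gap.
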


To show the theorem, let us consider the block $C_r$ containing $\mc D^+_{r,v-1}$ with $1\leq r\leq u-1$.
It turns out that the spectral flows $\sigma^n (C_r)$ with $1\leq r\leq u-1$ and $-1\leq n\leq v-2$ exhaust all atypical blocks and that
$\sigma^{-1}(C_r)$ contains $\mc L_{u-r}$ and $\sigma^n (C_r)$ with $0\leq n\leq v-2$ contains $\mc D^+_{r,v-1-n}$.
Therefore, all irreducible highest weight modules are contained in 
the blocks $\sigma^n (C_r)$.

Let $r=1,\ldots,u-1$.
The block $C_r$ is determined as follows.
By \eqref{twistRules} and \eqref{eqn:rel1}-\eqref{eqn:rel3}, the irreducible modules having non-zero $\mathrm{Ext}^1$ with $L_0:=\mc D^+_{r,v-1}$ are precisely
the following $L_{\pm 1}$:
\begin{eqnarray*}
L_1:=\begin{cases}
\sigma^{-1}(\mc D^+_{r,v-2}) & (v\geq 3),\\
\sigma^{-2} (\mc D^+_{u-r,1}) & (v=2),
\end{cases}
\qquad 
L_{-1}:=
\sigma^2(\mc D^+_{u-r,1}).
\end{eqnarray*}
By using \eqref{twistRules} and \eqref{eqn:rel1}-\eqref{eqn:rel3} again, $L_1$ has the unique irreducible atypical module $L_2$ not isomorphic to $L_0$ but having non-zero $\mathrm{Ext}^1$ with
$L_1$.
Similarly, $L_{-1}$ has the unique irreducible atypical module $L_{-2}$ not isomorphic to $L_0$ but having non-zero $\mathrm{Ext}^1$ with
$L_{-1}$.
By continuing this procedure, we define a set of irreducible 
atypical modules $\{L_n\}_{n\in\ZZ}$ and they form all irreducible modules in $C_r$.
We here write $L_0,L_1,\ldots,L_{v-1}$ in this order:
$$
\mc D^+_{r,v-1}, \sigma^{-1} (\mc D^+_{r,v-2}),
\sigma^{-2} (\mc D^+_{r,v-3}),\ldots,
\sigma^{-v+2} (\mc D^+_{r,1}),
\sigma^{-v} (\mc D^+_{u-r,v-1})
$$
It indicates the cyclic property modulo $v$ if we ignore the swap $r\leftrightarrow u-r$.
This swap is resolved when we consider all $1\leq r\leq u-1$.

Now, we see that $\sigma^1 (C_r)$ contains the irreducible modules
$$
\sigma^1(\mc D^+_{r,v-1}),  \mc D^+_{r,v-2},
\sigma^{-1} (\mc D^+_{r,v-3}),\ldots,
\sigma^{-v+3} (\mc D^+_{r,1}),
\sigma^{-v+1} (\mc D^+_{u-r,v-1})
$$
and notice that $\sigma^1 (C_r)$ contains the irreducible highest weight module $\mc D^+_{r,v-2}$.
In a similar way, it follows that $\sigma^n (C_r)$ with $0\leq n\leq v-2$ contains $\mc D^+_{r,v-1-n}$.
Moreover, since $\sigma^{-1}(\mc D^+_{r,v-1})\cong \mc L_{u-r}$, it 
follows that $\sigma^{-1} (C_r)$ contains
$$
\mc L_{u-r}, \sigma^{-2} (\mc D^+_{r,v-2}),
\sigma^{-3} (\mc D^+_{r,v-3}),\ldots,
\sigma^{-v+1} (\mc D^+_{r,1}),
\sigma^{-v} (\mc L_{r}).
$$
We now conclude that each irreducible highest weight module
is included in exactly one of $\sigma^n (C_r)$.
By the $v$-cyclic property, we clearly see that all atypical
irreducible modules $\sigma^m (\mc D^+_{r,s})$ with 
$m\in\ZZ$, $1\leq r\leq u-1$ and $1\leq s\leq v-1$ are included in
some $C^n_r$.
Thus, we have shown \Cref{atypicalblocks}.

We now describe more on an atypical block $C=\sigma^n (C_r)$.
Let $m$ be an integer and we rewrite $\sigma^n (L_m)$ as $L_m$.
 Note that the only non-zero $\Ext^1$-groups among irreducible modules in $C$ are of the form
\begin{equation}\label{extL}
\Ext^1(L_n,L_{n\pm1})=\C\quad n\in\ZZ.
\end{equation}
 We choose non-zero elements $E_m^\pm$ of $\Ext^1(L_m,L_{m\pm1})$:
$$
0\ra L_m\ra E_m^\pm \ra L_{m\pm1}\ra 0\quad \mbox{(non-split exact sequence)}.
$$
By writing $L_m\cong \sigma^{\ell_m}(\mc{D}^+_{r_m,s_m})$ with $1\leq r_m\leq u-1$,
$1\leq s_m\leq v-1$ and $\ell_m\in\ZZ$, we have $L_{m+1}\cong \sigma^{\ell_m}(\mc{D}^-_{u-r_m,v-s_m})$ and $E_m^+\cong \sigma^{\ell_m}(\cE^+_{r_m,s_m})$.
Similarly, we see that the module $E^-_m$ is a spectral flow of a $\cE^-$-type module.

\begin{lemma}\label{lem:Ext1}
For any $m\in\ZZ$,
\begin{equation}
\Ext^1(E^+_m, L_{m-1}) = 0 = \Ext^1(L_{m+2}, E^+_m), \qquad \Ext^1(E^-_m, L_{m+1}) = 0 = \Ext^1(L_{m-2}, E^-_m)
\end{equation}
\end{lemma}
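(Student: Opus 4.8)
Here is how I would approach \Cref{lem:Ext1}. The plan is to reduce all four vanishings to the criteria already available, namely \Cref{thm:extaaa}/\Cref{cor:ext} and the lower‑bounded computations \eqref{eqn:rel1}--\eqref{eqn:rel5}, by first stripping off a spectral flow and then, when necessary, taking a contragredient dual. (Note that the quantum‑group argument for \eqref{propQG2} used projectivity of the $P_n$; since the existence of projective covers is only established later, I cannot imitate it here and must argue directly.) First, since the conjugation functor $\conjaut$ interchanges $\mc E^+\leftrightarrow\mc E^-$ and $\Pi^+$‑type $\leftrightarrow$ $\Pi^-$‑type modules and carries each atypical block to an atypical block (matching up $L_n$ with $L_{\pm n+c}$ and $E^\pm_n$ with $E^\mp_{\pm n+c}$), the two statements about $E^-_m$ follow from the two about $E^+_m$; so I would prove only $\Ext^1(E^+_m,L_{m-1})=0$ and $\Ext^1(L_{m+2},E^+_m)=0$. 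Writing $L_m\cong\sfmod{\ell_m}{\sldis{r_m,s_m}^+}$ as before, recall $E^+_m\cong\sfmod{\ell_m}{\slindrel{r_m,s_m}^+}$ with $\slindrel{r_m,s_m}^+$ in the defining sequence \eqref{es:DED}; since $\sfmod{\ell_m}{\blank}$ is an invertible exact functor it induces
\[
\Ext^1(E^+_m,L_{m-1})\cong\Ext^1\!\big(\slindrel{r_m,s_m}^+,\sfmod{-\ell_m}{L_{m-1}}\big),\qquad
\Ext^1(L_{m+2},E^+_m)\cong\Ext^1\!\big(\sfmod{-\ell_m}{L_{m+2}},\slindrel{r_m,s_m}^+\big),
\]
so the problem becomes one of identifying the simple modules $\sfmod{-\ell_m}{L_{m-1}}$ and $\sfmod{-\ell_m}{L_{m+2}}$.

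The next step is to pin these down. By \Cref{cor:simples} any two adjacent simples of the block admit a common spectral‑flow representative of the shape $\sldis{r,s}^\varepsilon,\sldis{u-r,v-s}^{-\varepsilon}$; feeding this into \Cref{twistRules} (together with $\mc L_r=\sldis{r,0}^+$, $\sfmod{\pm1}{\mc L_r}\cong\sldis{u-r,v-1}^{\pm}$ and $\sfmod{-1}{\sldis{r,s}^+}\cong\sldis{u-r,v-1-s}^-$) one computes, away from the $v$‑periodic wraparound, $\sfmod{-\ell_m}{L_{m+1}}\cong\sldis{u-r_m,v-s_m}^-$ (the cokernel in \eqref{es:DED}), $\sfmod{-\ell_m}{L_{m-1}}\cong\sfmod{1}{\sldis{r_m,s_m+1}^+}$ and $\sfmod{-\ell_m}{L_{m+2}}\cong\sfmod{-1}{\sldis{u-r_m,v-s_m+1}^-}$; at the wraparound and in the small cases $v=2$, $s_m\in\{1,v-1\}$ these become $\sfmod{\pm2}$ of a $\sldis{r,s}^\pm$ or of an $\mc L_r$. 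Inspecting weight supports — equivalently, comparing with the list of $\Pi$‑type simples preceding \Cref{thm:extaaa}, extended from the $\sldis{r,s}^+$ to the $\mc L_r$ — then shows that in every case $\sfmod{-\ell_m}{L_{m-1}}$ is a simple module of $\Pi^-$‑type and $\sfmod{-\ell_m}{L_{m+2}}$ is a simple module of $\Pi^+$‑type.

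Now I would conclude. For the first statement, $\Ext^1(\slindrel{r_m,s_m}^+,M)=0$ for $M$ simple of $\Pi^-$‑type is exactly \Cref{cor:ext}(3). For the second I would dualize: the contragredient $\blank^\vee$ preserves the conformal‑weight and $\h$‑gradings, hence preserves $\Pi^\pm$‑type‑ness and fixes each $\sldis{r,s}^\pm$, and $(\slindrel{r_m,s_m}^+)^\vee\cong\slindrel{u-r_m,v-s_m}^-$ (both being the unique non‑split extension $\dses{\sldis{u-r_m,v-s_m}^-}{\ \cdot\ }{\sldis{r_m,s_m}^+}$, cf.\ \eqref{eqn:rel3}); therefore
\[
\Ext^1\!\big(\sfmod{-\ell_m}{L_{m+2}},\slindrel{r_m,s_m}^+\big)\cong\Ext^1\!\big(\slindrel{u-r_m,v-s_m}^-,(\sfmod{-\ell_m}{L_{m+2}})^\vee\big)=0
\]
by \Cref{thm:extaaa} ($=$ \Cref{cor:ext}(2)), since $(\sfmod{-\ell_m}{L_{m+2}})^\vee$ is again of $\Pi^+$‑type.

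I expect the only genuine work to be the bookkeeping in the second step: tracking the spectral‑flow labels of $L_{m-1}$ and $L_{m+2}$ through \Cref{twistRules}, in particular across the $v$‑periodic wraparound (where the flow index jumps by two and $r\leftrightarrow u-r$) and in the small cases, and verifying that $\sfmod{n}{\mc L_r}$ for $\lvert n\rvert\ge 2$ and the modules $(\slindrel{r,s}^\pm)^\vee$ fall into the families already covered by \Cref{thm:extaaa}/\Cref{cor:ext}. This is a finite, elementary case analysis, not a conceptual obstacle; the conceptual content is entirely in the spectral‑flow reduction and the use of $\blank^\vee$ to correct the orientation of the $\Ext$‑group.
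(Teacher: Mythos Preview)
Your proposal is correct and follows essentially the same route as the paper: strip off the spectral flow so that $E^+_m$ becomes $\slindrel{r_m,s_m}^+$, identify $\sfmod{-\ell_m}{L_{m-1}}$ as $\Pi^-$-type and $\sfmod{-\ell_m}{L_{m+2}}$ as $\Pi^+$-type, apply \Cref{cor:ext}(3) for the first vanishing, and use the contragredient dual together with \Cref{cor:ext}(2) for the second. The paper is slightly terser --- it asserts the $\Pi^\pm$-types directly without your explicit spectral-flow bookkeeping through \Cref{twistRules}, and handles the $E^-$ cases by ``a similar fashion'' rather than your conjugation reduction --- but the argument is the same.
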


\begin{proof}
We show the first and second equality.
We see that $\sigma^{-\ell_m}(L_m)\cong \mc{D}^+_{r_m,s_m}$ and
$\sigma^{-\ell_m}(L_{m+1})\cong \mc{D}^-_{u-r_m,v-s_m}$
are lower bounded modules.
Then, we see that $\sigma^{-\ell_m}(L_{m-1})$ is of $\Pi^-$-type
and $\sigma^{-\ell_m}(L_{m+1})$ is of $\Pi^+$-type.
It then follows from \Cref{cor:ext} (3) that 
$$
\Ext^1(\sigma^{-\ell_m}(E^+_m), \sigma^{-\ell_m}(L_{m-1})) = 0.
$$
It is equivalent to the first equality of the lemma.
Next, by \Cref{cor:ext} (2), we have
$$
\Ext^1(\cE^-_{r_m,s_m}, \sigma^{-\ell_m}(L_{m+2})) = 0.
$$
By taking the dual $\vee$, we have
$$
\Ext^1( \sigma^{-\ell_m}(L_{m+2}),\cE^+_{r_m,s_m}) = 0.
$$
Since the spectral flows preserve ext-groups, we have the second equality
of the lemma.
The rest are shown in a similar fashion.
\end{proof}

\begin{lemma}\label{lem:Ext2}
For any $m\in\ZZ$
\begin{equation}
\Ext^1(E^+_{m}, L_{m+1}) = 0 = \Ext^1(L_m, E^+_m), \qquad \Ext^1(E^-_{m}, L_{m-1}) = 0 = \Ext^1(L_m, E^-_m)
\end{equation}
\end{lemma}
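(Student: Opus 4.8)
The plan is to reduce the four vanishings to two elementary $\SLA{sl}{2}$-module computations. First, the statements come in two conjugate pairs, exactly as in the proof of \Cref{lem:Ext1}: conjugation (equivalently, replacing the Borel of $\SLA{sl}{2}$) interchanges $\mc{D}^+_{r,s}\leftrightarrow\mc{D}^-_{r,s}$ and $\cE^+_{r,s}\leftrightarrow\cE^-_{r,s}$, reverses the direction of spectral flow, and preserves $\Ext^1$; it therefore carries the two ``$E^-$'' equalities to the two ``$E^+$'' equalities, so it suffices to prove $\Ext^1(E^+_m,L_{m+1})=0$ and $\Ext^1(L_m,E^+_m)=0$.

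For these I would move into the lower bounded category. Recall from the discussion preceding \Cref{lem:Ext1} that $L_m\cong\sigma^{\ell_m}(\mc{D}^+_{r,s})$, $L_{m+1}\cong\sigma^{\ell_m}(\mc{D}^-_{u-r,v-s})$ and $E^+_m\cong\sigma^{\ell_m}(\cE^+_{r,s})$ for suitable $1\le r\le u-1$, $1\le s\le v-1$ and $\ell_m\in\ZZ$ (the exact sequence for $\cE^+_{r,s}$ being the $\ell=0$ case of \Cref{existelle}). Applying the $\Ext$-preserving functor $\sigma^{-\ell_m}$ reduces the two statements to $\Ext^1(\cE^+_{r,s},\mc{D}^-_{u-r,v-s})=0$ and $\Ext^1(\mc{D}^+_{r,s},\cE^+_{r,s})=0$, now between lower bounded $\savoa k\g$-modules. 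All three of $\cE^+_{r,s}$, $\mc{D}^-_{u-r,v-s}$, $\mc{D}^+_{r,s}$ have lowest conformal weight $\Delta_{r,s}$ (for $\mc{D}^-_{u-r,v-s}$ this uses $\Delta_{u-r,v-s}=\Delta_{r,s}$, which is \Cref{lem:delta}), so \Cref{prop:relext}(1) applies and, together with \Cref{rem:centralchar}, identifies both groups with $\Ext^1$ of top spaces computed among weight $\SLA{sl}{2}$-modules with semisimple Casimir action, i.e.\ in $\cD$. Since $(\cE^+_{r,s})_{top}=R^+_{\lambda_{r,s},\Delta_{r,s}}$, $(\mc{D}^-_{u-r,v-s})_{top}=D^-_{-\lambda_{u-r,v-s}}=D^-_{\lambda_{r,s}+2}$ and $(\mc{D}^+_{r,s})_{top}=D^+_{\lambda_{r,s}}$, everything comes down to
\[
\Ext^1_\cD(R^+_{\lambda,\Delta},D^-_{\lambda+2})=0\qquad\text{and}\qquad \Ext^1_\cD(D^+_\lambda,R^+_{\lambda,\Delta})=0,
\]
with $\lambda=\lambda_{r,s}$, $\Delta=\Delta_{r,s}$, where $0\ra D^+_\lambda\ra R^+_{\lambda,\Delta}\ra D^-_{\lambda+2}\ra0$.

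The inputs to these two computations are: (a) $\lambda_{r,s}$ is never a non-negative integer (since $1\le s\le v-1$ and $\gcd(u,v)=1$), so $M(\lambda)$ is simple and $D^+_\lambda=M(\lambda)$; and (b) $R^+_{\lambda,\Delta}$ is cyclic, generated by a weight vector $w$ of weight $\lambda+2$ on which $\h$ and $\Omega$ act by scalars (non-splitness of the defining sequence forces cyclicity), and — like any $\SLA{sl}{2}$-module generated by a single weight vector with scalar $\h$- and $\Omega$-action, being a quotient of $U(\SLA{sl}{2})\otimes_{U(\SLA{sl}{2})^{\h}}\C$ — has all weight spaces at most one-dimensional. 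For the first vanishing, take any $0\ra D^-_{\lambda+2}\xrightarrow{\iota}X\xrightarrow{\pi}R^+_{\lambda,\Delta}\ra0$ in $\cD$; then $\Omega$ acts on $X$ by the scalar $\Delta$ (semisimplicity plus $(\Omega-\Delta)^2X=0$), so lifting $w$ to $\tilde w\in X_{\lambda+2}$, the submodule $U(\SLA{sl}{2})\tilde w$ has one-dimensional weight spaces; hence $U(\SLA{sl}{2})\tilde w\cap\iota(D^-_{\lambda+2})$, being a submodule of the simple $D^-_{\lambda+2}$ with trivial weight-$(\lambda+2)$ part, is zero, so $\pi$ restricts to an isomorphism $U(\SLA{sl}{2})\tilde w\xrightarrow{\sim}R^+_{\lambda,\Delta}$ splitting the sequence. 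For the second vanishing, take any $0\ra R^+_{\lambda,\Delta}\ra Y\ra M(\lambda)\ra0$ in $\cD$ and lift the highest weight vector of $M(\lambda)$ to $\tilde v\in Y_\lambda$; if $e\tilde v\ne0$ then $e\tilde v$ lies in the one-dimensional space $(R^+_{\lambda,\Delta})_{\lambda+2}$, which generates $R^+_{\lambda,\Delta}$, so $\tilde v$ generates $Y$, forcing $Y$ to have one-dimensional weight spaces and contradicting $\dim Y_\lambda=\dim(R^+_{\lambda,\Delta})_\lambda+\dim M(\lambda)_\lambda=2$; hence $e\tilde v=0$, $\tilde v$ generates a copy of the simple $M(\lambda)=D^+_\lambda$ mapping isomorphically onto $M(\lambda)$, and the sequence splits.

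The main obstacle is not any single step but the bookkeeping in the middle paragraph: matching up the spectral-flow identifications, the identity $-\lambda_{u-r,v-s}=\lambda_{r,s}+2$ and the coincidence of lowest conformal weights from \Cref{lem:delta}, so that \Cref{prop:relext}(1) rather than (2) is the relevant case, and then keeping straight inside $\cD$ that $\Ext^1_\cD(R^+_{\lambda,\Delta},D^-_{\lambda+2})=0$ even though $\Ext^1_\cD(D^+_\lambda,D^-_{\lambda+2})=\C$ — the relaxed-Verma argument above being precisely what excludes the tempting non-split extension.
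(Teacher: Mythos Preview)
Your proof is correct and follows essentially the same strategy as the paper: reduce via spectral flow to lower bounded modules with common minimal conformal weight, invoke \Cref{prop:relext}(1) (equivalently \Cref{prop:same}) together with \Cref{rem:centralchar} to pass to the top spaces in $\cD$, and then argue at the level of $\SLA{sl}{2}$ that the relevant extensions split. The paper only writes out the case $\Ext^1(L_m,E^+_m)=0$ and phrases the endgame contrapositively (a non-split top extension would force a non-scalar Casimir action, contradicting \Cref{rem:centralchar}); your cyclic one-dimensional-weight-space argument is precisely the computation underlying the paper's unproven assertion that ``$d\neq 0$'', and you additionally treat $\Ext^1(E^+_m,L_{m+1})=0$ explicitly rather than leaving it to ``similar''.
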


\begin{proof}
We only show the second equality.
By applying $\sigma^{-\ell_m}$, we see that it is equivalent to
$$
\Ext^1(\mc{D}_{r,s}^+,\cE^+_{r,s})=0,
$$
where $r=r_m$ and $s=s_m$.
Assume on the contrary that there is a non-split exact sequence
$$
0\ra \cE^+_{r,s}\ra M\ra \mc{D}^+_{r,s}\ra0.
$$
By \Cref{prop:same}, we have non-split sequence
$$
0\ra \bar \cE^+_{r,s}\ra M_{top}\ra \bar{\mc{D}}^+_{r,s}\ra0,
$$
where $\bar{\mc{D}}^+_{r,s}$ is the irreducible highest-weight module of $\SLA{sl}{2}$ of highest weight $\lam_{r,s}$ and $\bar\cE^+_{r,s}$ is a dense $\SLA{sl}{2}$-module with non-split exact sequence
$$
0\ra \bar{\mc{D}}^+_{r,s}\ra \bar\cE^+_{r,s}\ra\bar{\mc{D}}^-_{r,s}\ra0.
$$
Here, $\bar{\mc{D}}^-_{r,s}$ is the $w$-conjugate of $\bar{\mc{D}}^+_{r,s}$,
which is the irreducible lowest weight module of lowest weight $-\lam_{r,s}$.
The $\SLA{sl}{2}$-module $M_{top}$ has three composition factors and both head
and socle of $M_{top}$ are isomorphic to $\bar{\mc{D}}^+_{r,s}$.
Now, we have a vector $v\in M_{top}$ such that the image of $v$
under the canonical surjection $M_{top}\ra \bar{\mc{D}}^+_{r,s}$ is a highest-weight vector, (which is non-zero).
We apply the Casimir operator $\Omega$ to $v$.
Then we see that $\Omega v=c v+d w$, with $c,d\in\C$, $d\neq 0$
and such that $w$ is a highest weight vector of the socle $\bar{\mc{D}}^+_{r,s}$.
It contradicts to the fact that the Casimir operator of $Z(L_k(\SLA{sl}{2}))$ acts as scalar on
indecomposable modules, Remark \ref{rem:centralchar}.
Thus, we have the second equality of the lemma.
The rest are proved in a similar way.
\end{proof}
The Adamovic construction of logarithmic modules, Proposition \ref{prop:log}, is in this notation
\begin{lemma}\label{lem:Ext3} \cite{AdaRea17} and Appendix \ref{appendix}.
For any $m\in\ZZ$,
\begin{equation}
\Ext^1(E^+_m, E^+_{m+1}) \neq 0 \qquad \text{and} \qquad \Ext^1(E^-_m, E^-_{m-1}) \neq 0.
\end{equation}\end{lemma}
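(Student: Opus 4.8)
This lemma is \Cref{prop:log} transcribed into the notation $\{L_n\}_{n\in\ZZ}$, $\{E^\pm_n\}_{n\in\ZZ}$ of an atypical block $C=\sfmod{n}{C_r}$, so the plan is to specialize the non-split short exact sequences of \Cref{prop:log} and rename the modules occurring in them; the only real work is the bookkeeping of spectral-flow indices.

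First I would set up the dictionary. Writing $L_m\cong\sfmod{\ell_m}{\sldis{r_m,s_m}^+}$ with $1\le r_m\le u-1$ and $1\le s_m\le v-1$, we have $E^+_m\cong\sfmod{\ell_m}{\slindrel{r_m,s_m}^+}$ by \Cref{existelle}, and from the $v$-cyclic ordering of the $L_n$ worked out in the discussion of \Cref{atypicalblocks} (which uses \Cref{twistRules}) one reads off
\[
L_{m-1}\cong\sfmod{\ell_m+1}{\sldis{r_m,s_m+1}^+}\ \ (1\le s_m\le v-2),\qquad L_{m-1}\cong\sfmod{\ell_m+2}{\sldis{u-r_m,1}^+}\ \ (s_m=v-1).
\]
By \Cref{existelle} again this gives $E^+_{m-1}\cong\sfmod{\ell_m+1}{\slindrel{r_m,s_m+1}^+}$ in the first range and $E^+_{m-1}\cong\sfmod{\ell_m+2}{\slindrel{u-r_m,1}^+}$ at the boundary.

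Now I would specialize \Cref{prop:log} at $\ell=\ell_m$, $(r,s)=(r_m,s_m)$. For $1\le s_m\le v-2$ its first exact sequence becomes the non-split sequence $\dses{E^+_m}{\sfmod{\ell_m}{\slproj{r_m,s_m}^+}}{E^+_{m-1}}$, and for $s_m=v-1$ its second exact sequence becomes the non-split sequence $\dses{E^+_m}{\sfmod{\ell_m}{\slproj{r_m,v-1}^+}}{E^+_{m-1}}$. Either way $\Ext^1(E^+_{m-1},E^+_m)\neq0$, and as $m$ runs over all of $\ZZ$ this is, after relabelling $n=m-1$, precisely $\Ext^1(E^+_n,E^+_{n+1})\neq0$ for every $n$. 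The statement for the $E^-_n$ is obtained in the identical way from the third and fourth exact sequences of \Cref{prop:log}, using that each $E^-_m$ is a spectral flow of a $\cE^-$-type module; equivalently it follows from the conjugation symmetry relating the $+$ and $-$ families noted after \Cref{existelle}.

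Given \Cref{prop:log}, there is no substantive obstacle here — the argument is a bookkeeping exercise. The only place demanding care is the boundary index $s_m=v-1$, where the spectral flow jumps by two and $r_m$ is replaced by $u-r_m$; this is exactly the family of logarithmic modules not covered by Adamovi\'c's original construction in \cite{AdaRea17} and supplied instead by \Cref{appendix}, so the genuine difficulty is entirely contained in \Cref{prop:log}.
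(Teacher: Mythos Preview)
Your proposal is correct and is exactly what the paper intends: the lemma is simply \Cref{prop:log} translated into the block notation $E^\pm_m$, and the paper gives no further proof beyond the citation. Your bookkeeping of the spectral-flow indices (including the $s_m=v-1$ boundary jump by two with the swap $r_m\leftrightarrow u-r_m$) is accurate and matches the $v$-cyclic pattern displayed after \Cref{atypicalblocks}.
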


\section{The atypical block}\label{sec:atyp}

Let $\mathcal C$ be a category whose simple inequivalent objects are $L_n$ for $n \in \mathbb Z$ 
{and whose objects have finite lengths}. We require that the properties of the atypical block of $\sWtsl{k}$
hold. These are
\begin{Properties}
$\mathcal C$ is a category whose simple inequivalent objects are $L_n$ for $n \in \mathbb Z$ and 
\begin{enumerate}
\item  By \eqref{extL}
\begin{equation}\label{extLL}
\Ext^1(L_n, L_m) = \begin{cases} \mathbb C & \quad  \text{if} \ |n-m| =1 \\ 0 &\quad \text{else} \end{cases}
\end{equation}
\item Let
\begin{equation}
 \ses{L_n}{E^\pm_n}{L_{n \pm1}} \qquad \in  \ \Ext^1( L_{n \pm 1}, L_n)
\end{equation}
{be a non-zero element.}
By Lemma \ref{lem:Ext1}
\begin{equation}\label{van1}
\Ext^1(E^+_{n+1}, L_n) = 0 = \Ext^1(L_n, E^+_{n-2}), \qquad \Ext^1(E^-_{n-1}, L_n) = 0 = \Ext^1(L_n, E^-_{n+2})
\end{equation}
\item 
By Lemma \ref{lem:Ext2}
\begin{equation}\label{van2}
\Ext^1(E^+_{n-1}, L_n) = 0 = \Ext^1(L_n, E^+_n), \qquad \Ext^1(E^-_{n+1}, L_n) = 0 = \Ext^1(L_n, E^-_n)
\end{equation}
\item By Lemma \ref{lem:Ext3}
\begin{equation}\label{ada}
\Ext^1(E^+_n, E^+_{n+1}) \neq 0 \qquad \text{and} \qquad \Ext^1(E^-_n, E^-_{n-1}) \neq 0.
\end{equation}
\end{enumerate}
\end{Properties}
In general, let $A, B, C, X$ be objects in $\mathcal C$ and 
\[
\ses{A}{B}{C}, \qquad \in \ \Ext^1(C, A)
\]
then there are the two long exact sequences 
\begin{equation}\label{long}
\begin{split}
 0 \rightarrow \Hom(X, A) \rightarrow \Hom(X, B) \rightarrow \Hom(X, C) \rightarrow \Ext^1(X, A) \rightarrow \Ext^1(X, B) \rightarrow \Ext^1(X, C) \rightarrow  \cdots \\
 0 \rightarrow \Hom(C, X) \rightarrow \Hom(B, X) \rightarrow \Hom(A, X) \rightarrow \Ext^1(C, X) \rightarrow \Ext^1(B, X) \rightarrow \Ext^1(A, X) \rightarrow  \cdots
\end{split}
\end{equation}
which we will now employ to determine extensions in $\mathcal C$.

\begin{theorem} \label{main} The atypical block satisfies
\begin{enumerate}
\item $\Ext^1(L_{n-1}, E^+_n) =\mathbb C$ and $\Ext^1(L_{m}, E^+_n) = 0$ for $m \neq n-1$.
\item $\Ext^1(E^+_n, L_{n+2}) =\mathbb C$ and $\Ext^1(E^+_n, L_m) = 0$ for $m \neq n+2$.
\item $\Ext^1(L_{n+1}, E^-_n) =\mathbb C$ and $\Ext^1(L_{m}, E^-_n) = 0$ for $m \neq n+1$.
\item $\Ext^1(E^-_n, L_{n-2}) = \mathbb C$ and $\Ext^1(E^-_n, L_{m}) = 0$ for $m \neq  n-2$.
\item $\Ext^1(E^-_n, E^+_m) = 0 = \Ext^1(E^+_n, E^-_m)$.
\item $\Ext^1(E^+_n, E^+_{n+1}) = \mathbb C = \Ext^1(E^+_n, E^+_{n+2})$ and $\Ext^1(E^+_n, E^+_{m}) =0$ for $m \neq  n+1, n+2$.
\item $\Ext^1(E^-_n, E^-_{n-1}) = \mathbb C = \Ext^1(E^-_n, E^-_{n-2})$ and $\Ext^1(E^-_n, E^-_{m}) =0$ for $m \neq  n-1, n-2$.
\item There exists an indecomposable module $P_n$ satisfying the non-split exact seqeunces
\[
\ses{E^+_n}{P_n}{E^+_{n-1}} \qquad \text{and} \qquad
\ses{E^-_n}{P_n}{E^-_{n+1}}.
\]
\item $\Ext^1(P_n, L_m) = 0 = \Ext^1(L_m, P_n)$. In particular $P_n$ is projective and injective. Since $P_n$ is indecomposable it is the projective cover and injective hull of $L_n$. Its Loewy diagram is
\begin{center}
\begin{tikzpicture}[scale=1]
\node (top) at (0,2) [] {$L_n$};
\node (left) at (-2,0) [] {$L_{n-1}$};
\node (right) at (2,0) [] {$L_{n+1}$};
\node (bottom) at (0,-2) [] {$L_n$};
\draw[->, thick] (top) -- (left);
\draw[->, thick] (top) -- (right);
\draw[->, thick] (left) -- (bottom);
\draw[->, thick] (right) -- (bottom);
\node (label) at (0,0) [circle, inner sep=2pt, color=white, fill=black!50!] {$P_n$};
\end{tikzpicture}
\end{center}
\item A complete list of inequivalent indecomposable modules is given in subsection \ref{sec:zigzag}
\item 
$\Ext^s(L_n, L_m) = \mathbb C$ if   $| n- m | \leq  s \ \text{and} \ n  -s  = m \mod  2$ and it is zero otherwise, i.e. $\Ext^\bullet(L_n, L_m) = x^{|n-m|}\mathbb C[x^2]$.
\end{enumerate} 
\end{theorem}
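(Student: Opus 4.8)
The plan is to extract all of the statements (1)--(11) of \Cref{main} from \textbf{Properties} by nothing more than systematic use of the two long exact sequences \eqref{long}, choosing in each step which of the two arguments to resolve so that no a priori uncontrolled $\Ext^2$-term ever obstructs the computation. \emph{First} I would establish the ``first layer'' (1)--(4), i.e.\ all groups $\Ext^1(L_m,E^\pm_n)$ and $\Ext^1(E^\pm_n,L_m)$: feed the defining sequences $\ses{L_n}{E^\pm_n}{L_{n\pm1}}$ into $\Hom(L_m,-)$ and $\Hom(-,L_m)$, using only \eqref{extLL} and the trivial spaces $\Hom(L_m,E^\pm_n)=\delta_{m,n}\C$, $\Hom(E^\pm_n,L_m)=\delta_{m,n\pm1}\C$ (the socle, resp.\ head, of $E^\pm_n$ is $L_n$, resp.\ $L_{n\pm1}$). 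The connecting map $\Hom(L_{n\pm1},L_{n\pm1})\to\Ext^1(L_{n\pm1},L_n)$ sends the identity to the class of $E^\pm_n$ and is therefore an isomorphism; this collapses the sequences, and the two remaining potentially nonzero spots are closed off using exactly \eqref{van1} and \eqref{van2}.

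\emph{Next}, with the first layer in hand, I would compute the groups among the $E$'s (parts (5)--(7)) by always resolving the argument that the first layer controls: apply $\Hom(E^\varepsilon_n,-)$ to $\ses{L_m}{E^{\varepsilon'}_m}{L_{m\pm1}}$ when the flanking terms are supplied by (2),(4), and otherwise apply $\Hom(-,E^{\varepsilon'}_m)$ to $\ses{L_n}{E^\varepsilon_n}{L_{n\pm1}}$ and use (1),(3). For every pair other than $\Ext^1(E^+_n,E^+_{n+1})$ and its $\pm$-mirror the sequence pins the group down directly; in particular all the $E^+$--$E^-$ groups of (5) vanish and $\Ext^1(E^+_n,E^+_{n+2})=\C$. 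For the one remaining pair the sequence gives only $\Ext^1(E^+_n,E^+_{n+1})=\ker\big(\Ext^1(E^+_n,L_{n+2})\to\Ext^2(E^+_n,L_{n+1})\big)\subseteq\C$; the genuinely necessary external input \eqref{ada} then forces this kernel to be all of $\C$, completing (6) and (7).

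\emph{Then} I would treat the projective $P_n$. By (6) (with indices shifted) $\Ext^1(E^+_{n-1},E^+_n)=\C$, and a nonzero class defines the short exact sequence $\ses{E^+_n}{P_n}{E^+_{n-1}}$, a module with composition factors $L_{n-1},L_n,L_n,L_{n+1}$. Two further applications of \eqref{long} to this sequence (using (1) and (2)) show that the class $[P_n]$ restricts nontrivially to $\soc E^+_{n-1}$ and pushes forward nontrivially to $\head E^+_n$, whence $\soc P_n=\head P_n=L_n$; since \eqref{extLL} forbids a uniserial module with successive factors $L_{n-1},L_{n+1}$, the Loewy diagram of $P_n$ is forced to be the diamond, and the submodule with head $L_{n-1}$ together with its quotient realise the second presentation $\ses{E^-_n}{P_n}{E^-_{n+1}}$ (part (8)). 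For (9) I would apply $\Hom(-,L_m)$ and $\Hom(L_m,-)$ to \emph{whichever} of the two presentations makes both flanking $\Ext^1$-terms vanish --- the $E^+$-one for $m\neq n+2$, the $E^-$-one for $m\neq n-2$, using $\soc P_n=\head P_n=L_n$ to kill the connecting maps in the borderline cases $m=n\pm1$ --- getting $\Ext^1(P_n,L_m)=0=\Ext^1(L_m,P_n)$ for all $m$. A routine finite-length d\'evissage then yields $\Ext^1(P_n,-)=0=\Ext^1(-,P_n)$, so $P_n$ is projective and injective; being indecomposable with simple head $L_n$ it is the projective cover and injective hull of $L_n$, with the stated Loewy diagram.

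\emph{Finally}, part (10) is the classification of indecomposables, carried out separately in Subsection~\ref{sec:zigzag}: the above data identify $\mathcal C$ with a string/special-biserial category whose indecomposables are the ``zigzag'' modules glued from the $L_n$ and $E^\pm_n$. For (11) I would build the minimal projective resolution of $L_n$ out of the $P_m$: $\Omega L_n=\rad P_n$ has head $L_{n-1}\oplus L_{n+1}$, and a short composition-factor count shows inductively that $\bigoplus_m P_m$ over $\{m:|n-m|\le s,\ n-s\equiv m\ (\mathrm{mod}\ 2)\}$ is the projective cover of $\Omega^s L_n$ and that $\Omega^{s+1}L_n$ again has head of this shape; minimality then gives $\dim\Ext^s(L_n,L_m)=[\,\head\Omega^sL_n:L_m\,]$, equal to $1$ precisely when $|n-m|\le s$ and $n-s\equiv m\pmod 2$, i.e.\ $\Ext^\bullet(L_n,L_m)=x^{|n-m|}\C[x^2]$. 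The main obstacle I anticipate is the step producing $P_n$: passing from the Ext-dimension counts to the honest module with its diamond Loewy structure and, crucially, its \emph{two} presentations as an $E^+$- and an $E^-$-extension, since it is this double presentation rather than any single computation that makes the clean vanishing in (9), and hence the projectivity and self-injectivity of $\mathcal C$, fall out; a secondary difficulty is simply the discipline of always resolving the argument that keeps $\Ext^2$ out of the picture, with \eqref{ada} the one indispensable extra ingredient.
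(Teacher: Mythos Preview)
Your proposal is correct and follows essentially the same route as the paper: layer-by-layer long exact sequence computations from the defining sequences of $E^\pm_n$, with \eqref{van1}, \eqref{van2} closing off the borderline cases in (1)--(4), the first layer feeding (5)--(7), \eqref{ada} supplying the one undetermined bit, and then the two presentations of $P_n$ yielding (9) and the zigzag/resolution arguments giving (10)--(11). The only cosmetic difference is in (8): the paper defines $P^+_n$ and $P^-_n$ separately via the $E^+$- and $E^-$-extensions and then proves $P^+_n\cong P^-_n$ by computing $\Hom(E^-_n,P^+_n)=\C=\Hom(P^+_n,E^-_{n+1})$ and showing the nonzero maps are mono/epi, whereas you pin down the diamond Loewy diagram first and read off the second presentation from it --- both arguments are short and equivalent.
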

\begin{corollary}\label{maincor}
As an abelian category the atypical block is completely determined by Properties \ref{properties} and in particular equivalent to the atypical block of $u^H_q(\SLA{sl}{2})$.
\end{corollary}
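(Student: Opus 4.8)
The plan is to establish \Cref{main} in order, parts (1)--(11), each part feeding the next, and then to read off \Cref{maincor}. Two symmetries of Properties~\ref{properties} are visible by inspection and I would use them constantly: the relabeling $n\mapsto -n$ together with $E^+_\bullet\leftrightarrow E^-_\bullet$ preserves \eqref{extLL}, \eqref{van1}, \eqref{van2} and \eqref{ada}, so every ``$-$''-statement in \Cref{main} follows from its ``$+$''-counterpart; and the assignment $L_n\mapsto L_n$, $E^+_n\mapsto E^-_{n+1}$ is compatible with all of the Properties, which (once $\mathcal C$ is known to have enough projectives) upgrades to a contravariant self-equivalence $\mathcal C\simeq\mathcal C^{\mathrm{op}}$, pairing ``$\Ext^1(M,N)$'' with ``$\Ext^1(N,M)$''. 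A third constant is the bookkeeping principle that a connecting map $\Hom(X,-)\to\Ext^1(Y,-)$ (or $\Hom(-,X)\to\Ext^1(-,Y)$) arising from a \emph{nonsplit} extension $\ses{Y}{Z}{X'}$, evaluated on the nonzero $\mathrm{id}_X$ (resp.\ on the inclusion/projection of a composition factor), is nonzero, hence --- since every $\Hom$- and $\Ext^1$-space in sight is at most one-dimensional --- an isomorphism; this is what lets the long exact sequences \eqref{long} pin down values rather than merely bound them.

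\emph{Parts (1)--(7): $\Ext^1$ among the $L_n$ and $E^\pm_n$.} For each target I would apply one of the two sequences \eqref{long} to a defining short exact sequence $\ses{L_n}{E^\pm_n}{L_{n\pm1}}$ --- placed in the first slot when computing $\Ext^1(M,-)$ and in the last when computing $\Ext^1(-,M)$ --- tested against the other object, and substitute: for (1)--(4), \eqref{extLL} and the vanishing \eqref{van1}--\eqref{van2}; for (5)--(7), the now-proved (1)--(4) and the nonvanishing \eqref{ada}. The outputs are $\Ext^1(L_{n-1},E^+_n)=\mathbb C$ with the other $\Ext^1(L_m,E^+_n)=0$; $\Ext^1(E^+_n,L_{n+2})=\mathbb C$ with the other $\Ext^1(E^+_n,L_m)=0$; the mixed groups $\Ext^1(E^+_n,E^-_m)=\Ext^1(E^-_n,E^+_m)=0$ (every relevant term of the long exact sequence already vanishes); and $\Ext^1(E^+_n,E^+_m)=\mathbb C$ exactly for $m=n+1$ (this is \eqref{ada}) and $m=n+2$ (the surviving copy of $\Ext^1(E^+_n,L_{n+2})$, the preceding connecting map from $\Hom(E^+_n,E^+_{n+2})=0$ being zero), and $0$ otherwise. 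The $E^-$-versions follow by the first symmetry.

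\emph{Parts (8)--(11): the projective--injectives and the module structure.} By \eqref{ada} (relabel $m\mapsto n-1$) there is a nonsplit $\ses{E^+_n}{P_n}{E^+_{n-1}}$, so $P_n$ is indecomposable; I would pin $\soc(P_n)=L_n$ thus: a simple submodule $S\not\cong\soc(E^+_n)$ would meet $E^+_n$ trivially, hence embed into $P_n/E^+_n=E^+_{n-1}$ onto its socle $L_{n-1}$, forcing the pullback of $[P_n]$ along $L_{n-1}\hookrightarrow E^+_{n-1}$ to split; but the restriction map $\Ext^1(E^+_{n-1},E^+_n)\to\Ext^1(L_{n-1},E^+_n)$ is injective (both spaces are $\mathbb C$ by (6) and (1), and its kernel is $\Ext^1(L_n,E^+_n)=0$ by (1)) --- a contradiction. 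Dually $\head(P_n)=L_n$, and a count of composition factors forces the diamond Loewy diagram; the submodule generated by $\soc(P_n)$ together with its $L_{n-1}$-layer is then a copy of $E^-_n$ with quotient $E^-_{n+1}$, yielding the second sequence in (8). Part (9) is the long exact sequence of $\ses{E^+_n}{P_n}{E^+_{n-1}}$ against $L_m$ with (1)--(2) and $\head(P_n)=\soc(P_n)=L_n$ substituted: it gives $\Ext^1(P_n,L_m)=0=\Ext^1(L_m,P_n)$, so $P_n$ is projective and injective, hence, being indecomposable with simple head $L_n$, the projective cover and injective hull of $L_n$. Now $\mathcal C$ has enough projectives and is locally finite of finite length, so it is equivalent to finite-dimensional modules over $A=\mathrm{End}\bigl(\bigoplus_n P_n\bigr)^{\mathrm{op}}$; parts (1) and (8)--(9) identify $A$ with the path algebra of the doubly infinite quiver $\cdots\rightleftarrows n\rightleftarrows n{+}1\rightleftarrows\cdots$ modulo the zigzag relations (straight length-two paths vanish, the two length-two loops at each vertex agree, length-three paths vanish) --- the $A_\infty$ zigzag algebra --- whose indecomposables are the interval ``zigzag'' modules listed in \Cref{sec:zigzag}, which is (10). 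Finally (11) comes from the minimal projective resolution of $L_m$ assembled from the $P_n$, namely $\cdots\to P_{m-2}\oplus P_m\oplus P_{m+2}\to P_{m-1}\oplus P_{m+1}\to P_m\to L_m$, in which $P_k$ appears in homological degree $s$ precisely when $|k-m|\le s$ and $k\equiv m-s\pmod 2$; applying $\Hom(-,L_n)$ yields $\Ext^\bullet(L_n,L_m)=x^{|n-m|}\mathbb C[x^2]$.

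\emph{Deducing \Cref{maincor}.} \Cref{sec:unrolledsl2} records that the atypical block $\mathcal C_i$ of $u^H_q(\SLA{sl}{2})$ satisfies Properties~\ref{properties} verbatim (namely \eqref{propQG1}--\eqref{propQG4}), so the whole argument above --- which used nothing but those Properties --- applies to $\mathcal C_i$ and produces the same simple objects, projective--injectives, indecomposables and $\Ext$-algebra; the bijection $L_n\leftrightarrow L_n$ then extends to an abelian equivalence $\mathcal C\simeq\mathcal C_i$. The main obstacle I foresee is exactly step (8): converting the numerical $\Ext^1$-data into genuine module structure --- in particular ruling out a larger socle for $P_n$ --- needs the \emph{precise} value $\Ext^1(E^+_{n-1},E^+_n)=\mathbb C$, not just the nonvanishing of \eqref{ada}, so parts (1)--(7) must be carried out as exact computations and in the right order. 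A close second is keeping straight, all through parts (1)--(9), which connecting maps $\Hom\to\Ext^1$ are nonzero --- the uniform reason being nonsplitness of the $E^\pm_n$ and of the Adamovi\'c extensions in \eqref{ada} together with one-dimensionality of the spaces involved.
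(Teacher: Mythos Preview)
Your plan is essentially the paper's own proof: parts (1)--(7) are exactly the long-exact-sequence computations carried out in \S\S6.1--6.4, and parts (10)--(11) match \S6.7. Two points deserve comment.

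First, your route to (8) via a direct socle/head argument is a legitimate variant of the paper's approach, which instead computes $\Hom(E^-_n,P^+_n)=\mathbb C=\Hom(P^+_n,E^-_{n+1})$ from the long exact sequence and then checks that the resulting nonzero map $E^-_n\to P^+_n$ is injective using $\Ext^1(L_n,E^+_n)=0$; both lead to the diamond Loewy diagram and the $E^-$-filtration.

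Second, your statement of (9) is incomplete as written. The single sequence $\ses{E^+_n}{P_n}{E^+_{n-1}}$ together with (1)--(2) and $\head(P_n)=\soc(P_n)=L_n$ kills $\Ext^1(P_n,L_m)$ for $m\neq n+2$ and $\Ext^1(L_m,P_n)$ for $m\neq n-2$, but in those two remaining cases the long exact sequence only gives an injection into $\Ext^1(E^+_n,L_{n+2})=\mathbb C$ (respectively $\Ext^1(L_{n-2},E^+_{n-1})=\mathbb C$), and you have no $\Ext^2$-information yet to force the next connecting map to be injective. The paper closes this by also running the long exact sequence for the \emph{second} description $\ses{E^-_n}{P_n}{E^-_{n+1}}$ obtained in (8), which by (3)--(4) kills $\Ext^1(P_n,L_m)$ for $m\notin\{n-1,n-2\}$ and $\Ext^1(L_m,P_n)$ for $m\notin\{n+1,n+2\}$; intersecting the two ranges gives the full vanishing. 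Since you do obtain the $E^-$-sequence in your (8), this is a one-line fix --- but the argument for (9) as you wrote it has a gap.
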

We now prove the statements of the Theorem.

\subsection{The cases $\Ext^1(L_m, E^+_n)$ and $\Ext^1(E^+_n, L_m)$} 

${}$ \newline
The case $X = L_m$ and $\ses{L_n}{E^+_n}{L_{n +1}}$ in \eqref{long} yields
\begin{equation}
\begin{split}
\cdots \rightarrow  \Hom(L_m, E^+_n ) \rightarrow \Hom(L_m, L_{n+1} ) \rightarrow \Ext^1(L_m, L_n) \rightarrow \Ext^1(L_m, E^+_n) \rightarrow \Ext^1(L_m, L_{n+1}) \rightarrow \cdots \\
\cdots \rightarrow \Hom(E^+_n, L_m) \rightarrow \Hom(L_n, L_m) \rightarrow \Ext^1(L_{n+1}, L_m) \rightarrow \Ext^1(E^+_n, L_m) \rightarrow \Ext^1(L_n, L_m) \rightarrow  \cdots
\end{split}
\end{equation}
We see that if $m \notin \{n, n+2, n+1, n-1\}$ then $\Ext^1(L_m, E^+_n) = 0$. We have $\Ext^1(L_m, E^+_n) = 0$ for $m=n, n+2$ by \eqref{van1} and \eqref{van2}. If $m=n+1$, then the part of the long-exact sequence becomes
\[
\cdots \rightarrow  0 \rightarrow \mathbb C \rightarrow \mathbb C \rightarrow \Ext^1(L_{n+1}, E^+_n) \rightarrow  0  \rightarrow \cdots
\]
and hence 
$\Ext^1(L_{n+1}, E^+_n) = 0$. If $m=n-1$, then it collapses to 
\[
\cdots \rightarrow  0 \rightarrow 0 \rightarrow \mathbb C \rightarrow \Ext^1(L_{n-1}, E^+_n) \rightarrow  0  \rightarrow \cdots
\]
and hence $\Ext^1(L_{n-1}, E^+_n) =\mathbb C$. 
Let
\begin{equation}\label{Zn}
\ses{E^+_n}{Z_n}{L_{n-1}} \qquad \in \ \Ext^1(L_{n-1}, E^+_n)
\end{equation}
{be a non-zero element.}
Next we observe that $\Ext^1(E^+_n, L_m) = 0$ for $m \notin \{n-1, n+1, n, n+2 \}$. By \eqref{van1} and \eqref{van2} this extension also vanishes for $m= n\pm1$. For $m=n$ we also get zero, since
\[
\cdots \rightarrow  0 \rightarrow \mathbb C \rightarrow \mathbb C \rightarrow \Ext^1(E^+_n, L_n) \rightarrow  0  \rightarrow \cdots
\]
Finally the case $m=n+2$ gives $\Ext^1(E^+_n, L_{n+2}) = \mathbb C$, since
\[
\cdots \rightarrow  0 \rightarrow 0 \rightarrow \mathbb C \rightarrow \Ext^1(E^+_n, L_{n+2}) \rightarrow  0  \rightarrow \cdots
\]

\subsection{The cases ${\Ext^1(L_m, E^-_n)}$ and ${\Ext^1(E^-_n, L_m)}$}

${}$ \newline
The case $X = L_m$ and $\ses{L_n}{E^-_n}{L_{n -1}}$  in \eqref{long} yields
\begin{equation}
\begin{split}
\cdots  \rightarrow  \Hom(L_m, E^-_n ) \rightarrow \Hom(L_m, L_{n-1} ) \rightarrow \Ext^1(L_m, L_n) \rightarrow \Ext^1(L_m, E^-_n) \rightarrow \Ext^1(L_m, L_{n-1}) \rightarrow \cdots \\
\cdots \rightarrow \Hom(E^-_n, L_m) \rightarrow \Hom(L_n, L_m) \rightarrow \Ext^1(L_{n-1}, L_m) \rightarrow \Ext^1(E^-_n, L_m) \rightarrow \Ext^1(L_n, L_m) \rightarrow  \cdots
\end{split}
\end{equation}
we see that if $m \notin \{n, n-2, n+1, n-1\}$ then $\Ext^1(L_m, E^-_n) = 0$. We have $\Ext^1(L_m, E^-_n) = 0$ for $m=n, n-2$ by \eqref{van1} and \eqref{van2}. If $m=n-1$, then the part of the long-exact sequence becomes
\[
\cdots \rightarrow  0 \rightarrow \mathbb C \rightarrow \mathbb C \rightarrow \Ext^1(L_{n-1}, E^-_n) \rightarrow  0  \rightarrow \cdots
\]
and hence 
$\Ext^1(L_{n-1}, E^-_n) = 0$. if $m=n+1$, then it collapses to 
\[
\cdots \rightarrow  0 \rightarrow 0 \rightarrow \mathbb C \rightarrow \Ext^1(L_{n-1}, E^-_n) \rightarrow  0  \rightarrow \cdots
\]
and hence $\Ext^1(L_{n-1}, E^-_n) =\mathbb C$.

Next we observe that $\Ext^1(E^-_n, L_m) = 0$ for $m \notin \{n-1, n+1, n, n-2 \}$. By \eqref{van1} and \eqref{van2} this extension also vanishes for $m= n\pm1$. For $m=n$ we also get zero, since
\[
\cdots \rightarrow  0 \rightarrow \mathbb C \rightarrow \mathbb C \rightarrow \Ext^1(E^-_n, L_n) \rightarrow  0  \rightarrow \cdots
\]
Finally the case $m=n-2$ gives $\Ext^1(E^-_n, L_{n-2}) = \mathbb C$, since
\[
\cdots \rightarrow  0 \rightarrow 0 \rightarrow \mathbb C \rightarrow \Ext^1(E^-_n, L_{n-2}) \rightarrow  0  \rightarrow \cdots
\]

\subsection{The cases $\Ext^1(E^-_n, E^+_m)$ and $\Ext^1(E^+_n, E^-_m)$}

${}$ \newline 
Consider \eqref{long} for {$X = E^\pm_n$ and $\ses{L_m}{E^\mp_m}{L_{m \mp 1}}$ and}
$X = E^\pm_m$ and $\ses{L_n}{E^\mp_n}{L_{n \mp 1}}$
\begin{equation}
\begin{split}
\cdots \rightarrow \Ext^1(E^+_n,  L_{m}) \rightarrow \Ext^1(E^+_n, E^-_m) \rightarrow \Ext^1(E^+_n, L_{m-1}) \rightarrow  \cdots \\
\cdots  \rightarrow \Ext^1(E^-_n,  L_{m}) \rightarrow \Ext^1(E^-_n, E^+_m) \rightarrow \Ext^1(E^-_n, L_{m+1}) \rightarrow  \cdots \\
\cdots  \rightarrow \Ext^1(L_{n-1}, E^+_m) \rightarrow \Ext^1(E^-_n, E^+_m) \rightarrow \Ext^1(L_n, E^+_m) \rightarrow  \cdots \\
\cdots  \rightarrow \Ext^1(L_{n+1}, E^-_m) \rightarrow \Ext^1(E^+_n, E^-_m) \rightarrow \Ext^1(L_n, E^-_m) \rightarrow  \cdots
\end{split}
\end{equation} 
From the second sequence we see that $\Ext^1(E^-_n, E^+_m)= 0$ for $m \not\in\{ n-2, n-3\}$ and from the third one we see that $\Ext^1(E^-_n, E^+_m)= 0$ for $m \not\in\{ n, n+1\}$. Hence $\Ext^1(E^-_n, E^+_m) = 0$.  

From the first sequence we see that $\Ext^1(E^+_n, E^-_m)= 0$ for $m \not\in\{ n+2, n+3\}$ and from the fourth one we see that $\Ext^1(E^{{+}}_n, E^{{-}}_m)= 0$ for $m \not\in\{ n, n-1\}$. Hence $\Ext^1(E^+_n, E^-_m) = 0$ as well.  

\subsection{The cases $\Ext^1(E^+_n, E^+_m)$ and $\Ext^1(E^-_n, E^-_m)$}

${}$ \newline 
Consider \eqref{long} for $X = E^\pm_{{n}}$ and $\ses{L_{{m}}}{E^\pm_{{m}}}{L_{{{m}} \pm 1}}$
\begin{equation}
\begin{split}
\cdots \rightarrow \Hom^1(E^-_n, E^-_m) \rightarrow \Hom(E^-_n, L_{m-1}) \rightarrow \Ext^1(E^-_n,  L_{m}) \rightarrow \Ext^1(E^-_n, E^-_m) \rightarrow \Ext^1(E^-_n, L_{m-1}) \rightarrow  \cdots \\
\cdots  \rightarrow \Hom(E^+_n, E^+_m) \rightarrow \Hom(E^+_n, L_{m+1}) \rightarrow  \Ext^1(E^+_n,  L_{m}) \rightarrow \Ext^1(E^+_n, E^+_m) \rightarrow \Ext^1(E^+_n, L_{m+1}) \rightarrow  \cdots 
\end{split}
\end{equation} 
From the second sequence we see that $\Ext^1(E^+_n, E^+_m)= 0$ for $m \not\in\{ n+1, n+2\}$. 
$\Ext^1(E^+_n, E^+_{n+2})= \mathbb C$ follows immediately, while we can only deduce from this sequence that $\Ext^1(E^+_n, E^+_{n+1}) \in \{ 0, \mathbb C\}$, but zero is excluded by \eqref{ada}.
In complete analogy we get from the first sequence that $\Ext^1(E^-_n, E^-_m)= 0$ or $m \not\in\{ n-1, n-2\}$ and $\Ext^1(E^-_n, E^-_{n-1})= \mathbb C = \Ext^1(E^-_n, E^-_{n-2})$.
Define $P^\pm_n$ by {non-split exact sequences}
\begin{equation}
\begin{split}
\ses{E^+_n}{P^+_n}{E^+_{n-1}}\qquad  \in  \ \Ext^1(E^+_{n-1}, E^+_n) \\
\ses{E^-_n}{P^-_n}{E^-_{n+1}}\qquad  \in  \ \Ext^1(E^-_{n+1}, E^-_n) \\
\end{split}
\end{equation} 
We will see in a moment that $P^+_n \cong P^-_n$. 

\subsection{The cases $\Ext^1(P_n, L_m)$ and $\Ext^1(L_m, P_n)$ }

${}$ \newline
Consider \eqref{long} for $\ses{E^\pm_n}{P^\pm_n}{E^\pm_{n\mp 1}}$
\begin{equation}\nonumber
\begin{split}
 0 \rightarrow \Hom(X, E^+_n) \rightarrow \Hom(X, P^+_n) \rightarrow \Hom(X, E^+_{n-1}) \rightarrow \Ext^1(X, E^+_n) \rightarrow \Ext^1(X, P^+_n) \rightarrow \Ext^1(X, E^+_{n-1}) \rightarrow  \cdots \\
 0 \rightarrow \Hom(E^+_{n-1}, X) \rightarrow \Hom(P^+_n, X) \rightarrow \Hom(E^+_n, X) \rightarrow \Ext^1(E^+_{n-1}, X) \rightarrow \Ext^1(P^+_n, X) \rightarrow \Ext^1(E^+_n, X) \rightarrow  \cdots \\
 0 \rightarrow \Hom(X, E^-_n) \rightarrow \Hom(X, P^-_n) \rightarrow \Hom(X, E^-_{n+1}) \rightarrow \Ext^1(X, E^-_n) \rightarrow \Ext^1(X, P^-_n) \rightarrow \Ext^1(X, E^-_{n+1}) \rightarrow  \cdots \\
 0 \rightarrow \Hom(E^-_{n+1}, X) \rightarrow \Hom(P^-_n, X) \rightarrow \Hom(E^-_n, X) \rightarrow \Ext^1(E^-_{n+1}, X) \rightarrow \Ext^1(P^-_n, X) \rightarrow \Ext^1(E^-_n, X) \rightarrow  \cdots
\end{split}
\end{equation}
The first sequence with $X = E^-_n$ tells us that $\Hom(E^-_n, P^+_n) =\mathbb C$, the second sequence with $X = E^-_{n+1}$ tells us that $\Hom(P^+_n, E^-_{n+1}) =\mathbb C$. 

We show that a non-zero morphism $\phi:E^-_n\ra P^+_n$ is a monomorphism.
Suppose that it was not injective on the contrary. Then
$\im(\phi)\cong L_{n-1}$ and the module $P_n^+/\im(\phi)$ is an element of
$\Ext^1(L_n,E_n^+)$.
Since $\Ext^1(L_n,E_n^+)=0$ by \eqref{van2}, it follows that
the sequence $0\ra E_n^+\ra P_n^+/\im(\phi)\ra L_n\ra0$ splits.
Then we see that $E_{n-1}^+$ is a submodule of $P_n^+$, which contradicts that $P_n^+$ is a non-split extension.
Thus, we have $E^-_n\subset P^+_n$.

In a similar way, we see that there is an epimorphism $P^+_n\ra E^-_{n+1}$. Combining with the embedding $E^-_n\ra P^+_n$, we have 
an exact sequence
$$
0\ra E^-_n\ra P^+_n\ra E_{n+1}^-\ra 0.
$$
Since $P^+_n$ is indecomposable we get from this that $P^+_n \cong P^-_n$. Set $P_n := P^+_n$. 

Consider the first sequence with $X =L_m$, then $\Ext^1(L_m, P_n) = 0$ if $m \notin \{n-1, n-2\}$. But the third sequence with $X =L_m$ tells us that $\Ext^1(L_m, P_n) = 0$ if $m \notin \{n+1, n+2\}$, so that $\Ext^1(L_m, P_n) = 0$.

Consider the second sequence with $X =L_m$, then $\Ext^1(P^n, L_m) = 0$ if $m \notin \{n+1, n+2\}$. But the fourth sequence with $X =L_m$ tells us that $\Ext^1(P_n, L_m) = 0$ if $m \notin \{n-1, n-2\}$, so that $\Ext^1(P_n, L_m) = 0$.

\subsection{Loewy diagrams}

The Loewy diagram of the projective module $P_n$ and the typical modules $E^\pm_n$ are
\begin{center}
\begin{tikzpicture}[scale=1]
\node (top) at (0,2) [] {$L_n$};
\node (left) at (-2,0) [] {$L_{n-1}$};
\node (right) at (2,0) [] {$L_{n+1}$};
\node (bottom) at (0,-2) [] {$L_n$};
\draw[->, thick] (top) -- (left);
\draw[->, thick] (top) -- (right);
\draw[->, thick] (left) -- (bottom);
\draw[->, thick] (right) -- (bottom);
\node (label) at (0,0) [circle, inner sep=2pt, color=white, fill=black!50!] {$P_n$};
\end{tikzpicture}
\hspace{2.5cm}
\begin{tikzpicture}[scale=1]
\node (top) at (0,1) [] {$L_{n \pm 1}$};
\node (right) at (0,-2) [] {${}$};
\node (bottom) at (0,-1) [] {$L_{n}$};
\draw[->, thick] (top) -- (bottom);
\node (label) at (1,0) [circle, inner sep=2pt, color=white, fill=black!50!] {$E^\pm_n$};
\end{tikzpicture}
\end{center}
$P_n$ is the projective cover and injective hull of $L_n$. Let $V_{n-1}^2$ denote the kernel of $P_n \rightarrow L_n$ and 
$\Lambda_{n-1}^2$ the quotient of $P_n$ by $L_n$, i.e. they are characterized by the short exact sequences
\[
\ses{V_{n-1}^2}{P_n}{L_n}\qquad \text{and} \qquad \ses{L_n}{P_n}{\Lambda_{n-1}^2}.
\]
and their Loewy diagrams are
\begin{center}
\begin{tikzpicture}[scale=1]
\node (top) at (0,2) [] {$L_n$};
\node (left) at (-2,0) [] {$L_{n-1}$};
\node (right) at (2,0) [] {$L_{n+1}$};
\draw[->, thick] (top) -- (left);
\draw[->, thick] (top) -- (right);
\node (label) at (0,0) [circle, inner sep=2pt, color=white, fill=black!50!] {$\Lambda_{n-1}^2$};
\end{tikzpicture}
\hspace{1.5cm}
\begin{tikzpicture}[scale=1]
\node (left) at (-2,0) [] {$L_{n-1}$};
\node (right) at (2,0) [] {$L_{n+1}$};
\node (bottom) at (0,-2) [] {$L_n$};
\draw[->, thick] (left) -- (bottom);
\draw[->, thick] (right) -- (bottom);
\node (label) at (0,0) [circle, inner sep=2pt, color=white, fill=black!50!] {$V_{n-1}^2$};
\end{tikzpicture}
\end{center}
Due to the shape of their Loewy diagrams we call these modules Zig-Zag modules. These modules can be extended by $L_n$ to $P_n$. In addition one finds that there exist also extensions by $L_{n-2}, L_n, L_{n+2}$ of the forms
\begin{equation}
\begin{split}
&\ses{L_{n-2}}{\Lambda_{n-2}^3}{V_{n-1}^2} \\
&\ses{L_n}{E^+_n \oplus E^-_n}{V_{n-1}^2} \\
&\ses{L_{n+2}}{V_{n-1}^3}{V_{n-1}^2} \\
&\ses{\Lambda_{n-1}^2 }{V_{n-2}^3}{L_{n-2}} \\
&\ses{\Lambda_{n-1}^2 }{E^+_{n-1} \oplus E^-_{n+1}}{L_n} \\
&\ses{\Lambda_{n-1}^2 }{\Lambda_{n-1}^3}{L_{n+2}} \\
\end{split}
\end{equation}
These modules $\Lambda_{n}^3, V_n^3$ are new indecomposables with decomposition factors our previously constructed indecomposables. 
One can now iterate this procedure and fortunately this has been done in section 3 of \cite{BRS} for  Temperly-Lieb algebra categories that are very similar to our category.

\subsection{Zig-Zag modules}\label{sec:zigzag}

It remains to understand extensions of indecomposable modules. That problem is exactly the same as the discussion of section 3 of \cite{BRS}.
Section 3.1, in particular Proposition 3.1 of \cite{BRS}  defines recursively modules $\Lambda_{n}^m, V_n^m$ for $m>2$. 
First one shows that $\Ext^1(L_{n+2m+1}, \Lambda_n^{2m}) = \mathbb C$ and defines $\Lambda_n^{2m+1}$ by the corresponding extension, that is
\[
\ses{\Lambda_n^{2m}}{\Lambda_n^{2m+1}}{L_{n+2m+1}}.
\]
Similarly one shows that $\Ext^1(\Lambda_{n}^{2m+1}, L_{n+2m+2}) = \mathbb C$
and defines $\Lambda_n^{2m+2}$ by the corresponding extension as well, that is
\[
\ses{L_{n+2m+2}}{\Lambda_n^{2m+2}}{\Lambda_n^{2m+1}}.
\]
The $V_n^m$ are constructed very similarly. First one shows that $\Ext^1(V_n^{2m}, L_{n_2m+1}) = \mathbb C$ and defines $V_n^{2m+1}$ by the corresponding extension, that is
\[
\ses{L_{n+2m+1}}{V_n^{2m+1}}{V_n^{2m}}.
\]
Similarly one shows that $\Ext^1(L_{n+2m+2}, V_{n}^{2m+1}) = \mathbb C$
and defines $V_n^{2m+2}$ by the corresponding extension as well, that is
\[
\ses{V_n^{2m+1}}{V_n^{2m+2}}{L_{n+2m+2}}.
\]
The Loewy diagrams of these modules are of the Zig-Zag form
\begin{center}
\begin{tikzpicture}[scale=1]
\node (top1) at (1,1) [] {$L_n$};
\node (bot1) at (2,0) [] {$L_{n+1}$};
\node (top2) at (3,1) [] {$L_{n+2}$};
\node (bot2) at (4,0) [] {$L_{n+3}$};
\node (top3) at (5,1) [] {$L_{n+2}$};
\node(bot3) at (6,0) [] {$$};
\node(mid) at (6.5,0.5) [] {$\cdots$};
\node (top4) at (10,1) [] {$L_{n+2m}$};
\node (bot4) at (9,0) [] {$L_{n+2m-1}$};
\node (top5) at (8,1) [] {$L_{n+2m-2}$};
\node(bot5) at (7,0) [] {$$};
\draw[->, thick] (top1) -- (bot1);
\draw[->, thick] (top2) -- (bot1);
\draw[->, thick] (top2) -- (bot2);
\draw[->, thick] (top3) -- (bot2);
\draw[->, dashed] (top3) -- (bot3);
\draw[->, thick] (top4) -- (bot4);
\draw[->, thick] (top5) -- (bot4);
\draw[->, dashed] (top5) -- (bot5);
\node (label) at (0,0.5) [] {$V_{n}^{2m}:$};
\end{tikzpicture}
\end{center}
\vspace{1cm}
\begin{center}
\begin{tikzpicture}[scale=1]
\node (top1) at (1,1) [] {$L_n$};
\node (bot1) at (2,0) [] {$L_{n+1}$};
\node (top2) at (3,1) [] {$L_{n+2}$};
\node (bot2) at (4,0) [] {$L_{n+3}$};
\node (top3) at (5,1) [] {$L_{n+2}$};
\node(bot3) at (6,0) [] {$$};
\node(mid) at (6.5,0.5) [] {$\cdots$};
\node (botextra) at (11,0) [] {$L_{n+2m+1}$};
\node (top4) at (10,1) [] {$L_{n+2m}$};
\node (bot4) at (9,0) [] {$L_{n+2m-1}$};
\node (top5) at (8,1) [] {$L_{n+2m-2}$};
\node(bot5) at (7,0) [] {$$};
\draw[->, thick] (top1) -- (bot1);
\draw[->, thick] (top2) -- (bot1);
\draw[->, thick] (top2) -- (bot2);
\draw[->, thick] (top3) -- (bot2);
\draw[->, dashed] (top3) -- (bot3);
\draw[->, thick] (top4) -- (bot4);
\draw[->, thick] (top5) -- (bot4);
\draw[->, dashed] (top5) -- (bot5);
\draw[->, thick] (top4) -- (botextra);
\node (label) at (0,0.5) [] {$V_{n}^{2m+1}:$};
\end{tikzpicture}
\end{center}
and
\begin{center}
\begin{tikzpicture}[scale=1]
\node (top1) at (1,0) [] {$L_n$};
\node (bot1) at (2,1) [] {$L_{n+1}$};
\node (top2) at (3,0) [] {$L_{n+2}$};
\node (bot2) at (4,1) [] {$L_{n+3}$};
\node (top3) at (5,0) [] {$L_{n+2}$};
\node(bot3) at (6,1) [] {$$};
\node(mid) at (6.5,0.5) [] {$\cdots$};
\node (top4) at (10,0) [] {$L_{n+2m}$};
\node (bot4) at (9,1) [] {$L_{n+2m-1}$};
\node (top5) at (8,0) [] {$L_{n+2m-2}$};
\node(bot5) at (7,1) [] {$$};
\draw[<-, thick] (top1) -- (bot1);
\draw[<-, thick] (top2) -- (bot1);
\draw[<-, thick] (top2) -- (bot2);
\draw[<-, thick] (top3) -- (bot2);
\draw[<-, dashed] (top3) -- (bot3);
\draw[<-, thick] (top4) -- (bot4);
\draw[<-, thick] (top5) -- (bot4);
\draw[<-, dashed] (top5) -- (bot5);
\node (label) at (0,0.5) [] {$\Lambda_{n}^{2m}:$};
\end{tikzpicture}
\end{center}
\vspace{1cm}
\begin{center}
\begin{tikzpicture}[scale=1]
\node (top1) at (1,0) [] {$L_n$};
\node (bot1) at (2,1) [] {$L_{n+1}$};
\node (top2) at (3,0) [] {$L_{n+2}$};
\node (bot2) at (4,1) [] {$L_{n+3}$};
\node (top3) at (5,0) [] {$L_{n+2}$};
\node(bot3) at (6,1) [] {$$};
\node(mid) at (6.5,0.5) [] {$\cdots$};
\node (botextra) at (11,1) [] {$L_{n+2m+1}$};
\node (top4) at (10,0) [] {$L_{n+2m}$};
\node (bot4) at (9,1) [] {$L_{n+2m-1}$};
\node (top5) at (8,0) [] {$L_{n+2m-2}$};
\node(bot5) at (7,1) [] {$$};
\draw[<-, thick] (top1) -- (bot1);
\draw[<-, thick] (top2) -- (bot1);
\draw[<-, thick] (top2) -- (bot2);
\draw[<-, thick] (top3) -- (bot2);
\draw[<-, dashed] (top3) -- (bot3);
\draw[<-, thick] (top4) -- (bot4);
\draw[<-, thick] (top5) -- (bot4);
\draw[<-, dashed] (top5) -- (bot5);
\draw[<-, thick] (top4) -- (botextra);
\node (label) at (0,0.5) [] {$\Lambda_{n}^{2m+1}:$};
\end{tikzpicture}
\end{center}

It remains to show that these modules are closed under extensions. 
Proposition 3.8 of \cite{BRS} gives for $m\neq 0$, that 
\begin{equation}\label{list}
\begin{split}
\Ext^1(L_r, \Lambda_n^{2m}) &= \bigoplus_{s=0}^{m+1} \delta_{r, n+2s-1} \mathbb C = \Ext^1(V_n^{2m}, L_r) \\
\Ext^1(L_r, \Lambda_n^{2m+1}) &= \bigoplus_{s=0}^{m} \delta_{r, n+2s-1} \mathbb C = \Ext^1(V_n^{2m+1}, L_r) \\
\Ext^1(\Lambda_n^{2m}, L_r) &= \delta_{m, 1} \delta_{r, n+1}\mathbb C \oplus  \bigoplus_{s=1}^{m-1} \delta_{r, n+2s} \mathbb C = \Ext^1(L_r, V_n^{2m}) \\
\Ext^1(\Lambda_n^{2m+1}, L_r) &= \bigoplus_{s=1}^{m+1} \delta_{r, n+2s} \mathbb C = \Ext^1(L_r, V_n^{2m+1}).
\end{split}
\end{equation}
$\Ext^1(\Lambda_n^{2}, L_{n+1})$ and $ \Ext^1(L_{n+1}, V_n^{2})$ were discussed in the previous section and the extension is the projective module $P_{n+1}$. 
The construction of all other extensions is the same as Proposition 3.9 of \cite{BRS} and with the convenient notation $\Lambda_n^{-1} := V_n^{-1}:= 0$, $\Lambda_n^0 := V_n^0 := L_n$ and $\Lambda_n^1:= E^+_n, V_n^1 := E^-_{n+1}$ one gets 
\begin{equation}\label{eq5.15}
\begin{split}
&\ses{\Lambda_n^{2m}}{\Lambda_{n}^{2s-1} \oplus V_{n+2s-1}^{2m-2s+1}}{L_{n+2s-1}}, \qquad \text{for} \ 0 \leq s \leq  m+1 \\ 
&\ses{L_{n+2s}}{\Lambda_{n}^{2s} \oplus \Lambda_{n+2s}^{2m-2s}}{\Lambda_n^{2m}}, \qquad \text{for} \ 1 \leq s \leq  m-1 \\ 
&\ses{\Lambda_n^{2m+1}}{\Lambda_{n}^{2s-1} \oplus V_{n+2s-1}^{2m-2s+2}}{L_{n+2s-1}}, \qquad \text{for} \ 0 \leq s \leq  m \\ 
&\ses{L_{n+2s}}{\Lambda_{n}^{2s} \oplus \Lambda_{n+2s}^{2m-2s+1}}{\Lambda_n^{2m+1}}, \qquad \text{for} \ 1 \leq s \leq  m+1 \\
&\ses{ L_{n+2s-1}}{V_{n}^{2s-1} \oplus \Lambda_{n+2s-1}^{2m-2s+1}}{V_n^{2m}}, \qquad \text{for} \ 0 \leq s \leq  m+1 \\ 
&\ses{V_n^{2m} }{V_{n}^{2s} \oplus V_{n+2s}^{2m-2s}}{L_{n+2s}}, \qquad \text{for} \ 1 \leq s \leq  m-1 \\ 
&\ses{ L_{n+2s-1}}{V_{n}^{2s-1} \oplus \Lambda_{n+2s-1}^{2m-2s+2}}{V_n^{2m+1}}, \qquad \text{for} \ 0 \leq s \leq  m \\ 
&\ses{V_n^{2m+1} }{V_{n}^{2s} \oplus V_{n+2s}^{2m-2s+1}}{L_{n+2s}}, \qquad \text{for} \ 1 \leq s \leq  m+1 \\ 
\end{split}
\end{equation}
We claim that any non-projective indecomposable object of length $m$ is either isomorphic to $\Lambda^{m-1}_n$ or $V^{m-1}_n$ for some $n$. This follows by induction for $m$, since any indecomposable object of length $m$ is an extension by a simple object of length $m-1$.  With the exception of $\Ext^1(\Lambda_n^{2}, L_{n+1})$ and $ \Ext^1(L_{n+1}, V_n^{2})$ that lead to projective indecomposables any extension of  $\Lambda^{m-2}_n$ or $V^{m-2}_n$ appears in \eqref{list} and if they are indecomposable then they are
one of the four  $\Lambda^{m-1}_n, V^{m-1}_n, \Lambda^{m-1}_{n-1}$ or  $V^{m-1}_{n-1}$. 

The projective cover of these modules is the projective cover of its head and the injective hull is the one of its socle. 
Set 
\[
R_n^{m} := \bigoplus_{s = 0}^m P_{n+2s},
\]
then the Zig-Zag modules satisfy
\begin{equation}
\begin{split}
&\ses{V_{n}^{2m}}{R_n^{m}}{V_{n-1}^{m+1}} \\
&\ses{\Lambda_{n-1}^{m+1}}{R_n^m}{\Lambda_n^m}.  
\end{split}
\end{equation}
Splicing these one gets the injective and projective resolutions of simple objects
\begin{equation}
\begin{split}
0 \rightarrow L_n \rightarrow R_n^0 \rightarrow R_{n-1}^1  \rightarrow R_{n-2}^2  \rightarrow R_{n-3}^3 \rightarrow  \cdots \\ 
\cdots \rightarrow R_{n-3}^3  \rightarrow R_{n-2}^2 \rightarrow R_{n-1}^1 \rightarrow R_{n}^0 \rightarrow L_n  \rightarrow  0.
\end{split}
\end{equation}
The projective resolution yields the cochain complex
\[
0 \rightarrow \text{Hom}(R_n^0,  \bullet) \rightarrow \text{Hom}(R_{n-1}^1,  \bullet) \rightarrow \text{Hom}(R_{n-2}^2,  \bullet) \rightarrow \text{Hom}(R_{n-3}^3,  \bullet) \rightarrow \cdots
\]
We have 
\begin{equation}
\begin{split}
\text{Hom}(R_{n-s}^s, L_m) &= \text{Hom}(P_{n-s} \oplus P_{n-s+2} \oplus \dots \oplus P_{n+s-2} \oplus P_{n+2}, L_m) \\
 &= \text{Hom}(L_{n-s} \oplus L_{n-s+2} \oplus \dots \oplus L_{n+s-2} \oplus L_{n+2}, L_m) \\
 &= \begin{cases} \mathbb C & \ \text{if} \ | n- m | \leq  s \ \text{and} \ n -s = m \mod  2 \\ 0 & \ \text{else} \end{cases}
\end{split}
\end{equation}
In particular if $\text{Hom}(R_{n-s}^s, L_m) \neq 0$ then $\text{Hom}(R_{n-(s \pm 1)}^{s \pm 1}, L_m) =0$ and so the coboundary of this cochain complex is trivial. Thus 
\[
\Ext^s(L_n, L_m) = \text{Hom}(R_{n-s}^s, L_m) =  \begin{cases} \mathbb C & \ \text{if} \ | n- m | \leq  s \ \text{and} \ n -s  = m \mod  2 \\ 0 & \ \text{else} \end{cases}
\]

\appendix \section{Constructing the $P_n$}\label{appendix}

In this section let $V$ be a VOA containing a Heisenberg subVOA $H$ generated by a Heisenberg field $J$. Let $J_0$ be the zero-mode. 
This section uses the contragredient dual and so we recall Proposition 4.3.1 of \cite{CMY4}: Let $M, N, R$ be modules for a VOA $V$ and $M', N', R'$ be their contragredient duals. 
If 
\[
 M \xrightarrow{f} N \xrightarrow{g} R 
\]
is exact, then so is 
\[
R' \xrightarrow{g'} N' \xrightarrow{f'} M'. 
\]

We assume the following

\begin{assum}\label{assum_on_modules}
There are simple $V$-modules $A, B, C$ and a complex number $h$, such that $B \not\cong C$ and 
\begin{enumerate}
\item $J_0$ acts semisimply on $A, B, C$ and  the $J_0$-eigenvalues on the modules $A, B, C$ lie in $h +\mathbb Z$
\item $\Ext^1(A, A) = \Ext^1(B, B) = \Ext^1(C, C) = 0$. 
\item There exist modules $E_B, F_C$ satisfying the non-split short-exact  sequences
\[
\ses{B}{E_B}{A}\qquad \text{and} \qquad \ses{A}{F_C}{C}.
\]
\item Let $\pi = \text{exp}(-2\pi i h)\mathrm{Id} \ \text{exp}(2\pi i J_0) - \mathrm{Id}$. There exist modules $E_B^{(2)}, F_C^{(2)}$ satisfying the non-split short-exact sequences 
\[
0\rightarrow {E_B} \rightarrow  {E_B^{(2)}}\xrightarrow{\pi}{E_B}\rightarrow 0
\qquad \text{and} \qquad 
0\rightarrow {F_C} \rightarrow  {F_C^{(2)}}\xrightarrow{\pi}{F_C}\rightarrow 0
 \]
\end{enumerate}

\end{assum}

\begin{theorem}\label{thm:appendix}
Retain Assumption \ref{assum_on_modules}, then there exists an indecomposable module $P_A$ on which $J_0$ acts semisimply and that has Loewy diagram
\begin{center}
\begin{tikzpicture}[scale=1]
\node (top) at (0,2) [] {$A$};
\node (left) at (-2,0) [] {$B$};
\node (right) at (2,0) [] {$C$};
\node (bottom) at (0,-2) [] {$A$};
\draw[->, thick] (top) -- (left);
\draw[->, thick] (top) -- (right);
\draw[->, thick] (left) -- (bottom);
\draw[->, thick] (right) -- (bottom);
\node (label) at (0,0) [circle, inner sep=2pt, color=white, fill=black!50!] {$P_A$};
\end{tikzpicture}
\end{center}

\end{theorem}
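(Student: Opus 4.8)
The plan is to read off the submodule structure of the ``$\pi$-doubled'' modules $E_B^{(2)},F_C^{(2)}$, to extract from them two uniserial length-three pieces whose heads and socles are copies of $A$, to build $P_A$ as a suitable quotient of the pullback of those two pieces over their common head, and finally to use the single remaining freedom — the isomorphism used to amalgamate the two socles — to force $J_0$ to act semisimply. I begin with Step~1, the structure theory. Since $E_B$ is a non-split extension of $A$ by $B$ and $\Ext^1(A,A)=0$ we get $A\not\cong B$, and likewise $A\not\cong C$. In the sequence $\ses{E_B}{E_B^{(2)}}{E_B}$ the operator $\pi$ on $E_B^{(2)}$ has image equal to its kernel, which is the submodule $E_B\subseteq E_B^{(2)}$ appearing in the sequence; hence $\pi^2=0$ and $\pi$ induces an isomorphism $E_B^{(2)}/E_B\xrightarrow{\sim}E_B$. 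A short diagram chase using $\Ext^1(A,A)=\Ext^1(B,B)=0$ together with the non-splitting hypothesis then forces $E_B^{(2)}$ to have simple head $A$ and simple socle $B$, hence to be uniserial with composition factors (top to bottom) $A,B,A,B$; in particular $\pi$ carries the first layer isomorphically onto the third. By the symmetric argument (or by exactness of contragredient duality, Proposition~4.3.1 of \cite{CMY4}), $F_C^{(2)}$ is uniserial with composition factors $C,A,C,A$.

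For Step~2 set $M_B:=E_B^{(2)}/\soc(E_B^{(2)})$ and $M_C:=\rad(F_C^{(2)})$; by Step~1 these are uniserial, $M_B\cong\begin{smallmatrix}A\\ B\\ A\end{smallmatrix}$ and $M_C\cong\begin{smallmatrix}A\\ C\\ A\end{smallmatrix}$, each with head and socle isomorphic to $A$. From the layerwise description of $\pi$ in Step~1 one reads off that on $M_B$ (resp.\ $M_C$) the map $\pi$ annihilates the radical and, on passing to the head, induces an isomorphism $\theta_B\colon\head(M_B)\to\soc(M_B)$ (resp.\ $\theta_C\colon\head(M_C)\to\soc(M_C)$). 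I also record that $\pi$ vanishes on every length-two subquotient of $E_B^{(2)}$ and $F_C^{(2)}$; since the eigenvalue hypothesis $J_0\in h+\ZZ$ makes $\pi=0$ on a module equivalent to semisimplicity of $J_0$ there, this says that $J_0$ acts semisimply on $E_B$, $F_C$ and on all the relevant two-step subquotients.

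In Step~3 I assemble $P_A$. Because $A$ is the largest common quotient of $M_B$ and $M_C$, the pullback $W:=M_B\times_A M_C$ over the head quotients is cyclic (generated by any lift of a common generator of the head $A$); hence $W$ has simple head $A$, second Loewy layer $B\oplus C$, and socle $\soc(M_B)\oplus\soc(M_C)\cong A\oplus A$. For any isomorphism $\psi\colon\soc(M_B)\to\soc(M_C)$ the antidiagonal $\Delta_\psi:=\{(s,-\psi(s)):s\in\soc(M_B)\}$ is a submodule of $\soc(W)$, and $P_A:=W/\Delta_\psi$ is then cyclic of length four with composition factors $A,B,C,A$, simple head $A$, simple socle $\soc(W)/\Delta_\psi\cong A$, and middle layer $B\oplus C$ — the last two points by a direct inspection of the submodule lattice of $W$, using that $\Delta_\psi$ is a graph while the two strands of $\rad(W)$ terminate in the two coordinate copies of $A$ in $\soc(W)$. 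Thus $P_A$ has exactly the asserted Loewy diagram, and it is indecomposable since its head is simple.

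It remains, in Step~4, to choose $\psi$ so that $J_0$ acts semisimply on $P_A$; this is the crux. As $\pi$ is a $V$-module endomorphism (the whole of $V$ has integer $J$-charge, so $e^{2\pi i J_0}$ commutes with the $V$-action on any module with $J_0$-eigenvalues in $h+\ZZ$) and $\pi$ kills $\rad(W)$, it descends on $P_A$ to a $V$-linear map killing the radical; on a generator $\tilde x=(x_B,x_C)$ of $W$ with common image $\bar x$ in the head $A$ one computes $\pi\tilde x=(\theta_B(\bar x),\theta_C(\bar x))$. Taking $\psi:=-\theta_C\circ\theta_B^{-1}$ puts $\pi\tilde x$ into $\Delta_\psi$, so $\pi$ annihilates $\tilde x$, and since $\pi$ commutes with the $V$-action and $P_A$ is generated by $\tilde x$ it follows that $\pi=0$ on $P_A$, i.e.\ $J_0$ acts semisimply, completing the construction. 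I expect this last step to be the genuine difficulty: the extension of $A$ by $\rad(P_A)\cong\begin{smallmatrix}B\ \ C\\ A\end{smallmatrix}$ realizing the diamond is rigid, so non-semisimplicity of $J_0$ cannot be repaired after the fact — the only leverage is $\psi$, and for $\psi$ to cancel $\pi$ one needs the two halves $M_B,M_C$ to carry a nonzero but mutually compatible $\pi$-action, which is precisely what the $\pi$-doubled modules of \Cref{assum_on_modules}(4) supply and what $E_B,F_C$ alone do not. The remaining care points are the uniseriality claim of Step~1 (which leans on \Cref{assum_on_modules}(2)--(3) and the non-splitting hypothesis) and verifying that the single choice of $\psi$ annihilates $\pi$ on all of $P_A$ rather than on just one vector.
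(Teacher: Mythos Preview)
Your proof is correct and rests on the same core idea as the paper's --- extract from the $\pi$-doubled modules the length-three uniserial pieces with head and socle $A$, and then use $\pi$ itself as the gluing datum to assemble the diamond --- but the execution is genuinely dual to the paper's.  The paper passes twice through contragredient duality: it forms $K_A=\ker\bigl(\varphi\colon G_B'\oplus G_C'\to A'\bigr)$ with $\varphi$ determined by $\pi'$, dualizes back to obtain $K_A'$ (a pushout of $G_B,G_C$ along their socles, with head $A\oplus A$ and socle $A$), and then \emph{defines} $P_A=\ker\bigl(\pi\vert_{K_A'}\bigr)$, so that semisimplicity of $J_0$ is automatic.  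You instead take the pullback $W=M_B\times_A M_C$ over the heads (giving head $A$ and socle $A\oplus A$) and then quotient by a $\pi$-determined antidiagonal $\Delta_\psi$ in the socle.  Your route avoids the two dualizations and makes the amalgamation more transparent, at the price of having to check explicitly that the single choice $\psi=-\theta_C\circ\theta_B^{-1}$ kills $\pi$ on all of $P_A$; the cyclicity of $P_A$ then reduces that check to the generator, exactly as you argue.

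One point that deserves a line more than you give it is the cyclicity of $W$.  Saying ``$A$ is the largest common quotient'' is the right intuition but not quite a proof; concretely, any surjection $W\to B$ must vanish on $0\times\rad(M_C)$ (whose composition factors are $C,A$, neither equal to $B$), hence factor through $W/(0\times\rad M_C)\cong M_B$, whose head is $A$ --- contradiction; likewise for $C$.  Together with the easy $\Hom(W,A)=\CC$ this gives $\head(W)=A$, and your Step~4 argument then goes through.  The uniseriality claim in Step~1 is asserted with the same brevity in the paper; in the intended application (where all relevant $\Ext^1$'s between simples are at most one-dimensional) it is straightforward, though in the abstract generality of the appendix both arguments lean on this implicitly.
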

\begin{proof}
The proof is inspired by Appendix B of \cite{Garner:2023pmt}.
Since $\Ext^1(A, A) = \Ext^1(B, B) =\Ext^1(C, C) = 0$ the Loewy diagrams of $E_B^{(2)}, F_C^{(2)}$ can only be of the form

\begin{center}
\begin{tikzpicture}[scale=1]
\node (top) at (0,2) [] {$A$};
\node (left) at (0,1) [] {$B$};
\node (right) at (0,0) [] {$A$};
\node (bottom) at (0,-1) [] {$B$};
\draw[->, thick] (top) -- (left);
\draw[->, thick] (left) -- (right);
\draw[->, thick] (right) -- (bottom);
\node (label) at (-1,0.5) [circle, inner sep=2pt, color=white, fill=black!50!] {$E_B^{(2)}$};
\node at (1,0.5) []{and};
\node (top) at (3,2) [] {$C$};
\node (left) at (3,1) [] {$A$};
\node (right) at (3,0) [] {$C$};
\node (bottom) at (3,-1) [] {$A$};
\draw[->, thick] (top) -- (left);
\draw[->, thick] (left) -- (right);
\draw[->, thick] (right) -- (bottom);
\node (label) at (2,0.5) [circle, inner sep=2pt, color=white, fill=black!50!] {$F_C^{(2)}$};
\end{tikzpicture}
\end{center}
Let $G_B$ be the the cokernel of the embedding of $B$ in $E_B^{(2)}$ and let $G_C$ be the kernel of the projection of $F_C^{(2)}$ onto $C$. The resulting Loewy diagrams are
\begin{center}
\begin{tikzpicture}[scale=1]
\node (top) at (0,2) [] {$A$};
\node (left) at (0,1) [] {$B$};
\node (right) at (0,0) [] {$A$};
\node at (1,1) []{and};
\draw[->, thick] (top) -- (left);
\draw[->, thick] (left) -- (right);
\node (label) at (-1,1) [circle, inner sep=2pt, color=white, fill=black!50!] {$G_B$};
\node (top) at (3,2) [] {$A$};
\node (left) at (3,1) [] {$C$};
\node (right) at (3,0) [] {$A$};
\draw[->, thick] (top) -- (left);
\draw[->, thick] (left) -- (right);
\node (label) at (2,1) [circle, inner sep=2pt, color=white, fill=black!50!] {$G_C$};
\end{tikzpicture}
\end{center}
For both $G_B, G_C$ the image of $\pi$ is the module $A$, that is we have surjections $\pi_B: G_B \rightarrow A, \pi_C: G_C  \rightarrow A$. 
Let $G_B', G_C'$ be the contragredient dual of $G_B, G_C$ so that $\pi_B, \pi_C$ correspond to injections $\iota_B: A' \rightarrow G_B', \iota_C: A' \rightarrow G_C'$. 
Now, 
 $\pi' := \text{exp}(2\pi i h)\mathrm{Id} \ \text{exp}(2\pi i J_0) - \mathrm{Id}$ acts nilpotently on $G_B', G_C'$ with image $A'$, the contragredient dual of $A$. In particular we have surjective homomorphisms $\varphi_B : G_B' \rightarrow A'$ and  $\varphi_C : G_C' \rightarrow A'$ and hence one from the direct sum,  $\varphi: G_B'\oplus G_C'$ to $A'$. The restriction of $\varphi$ to each summand is $\varphi_B$ respectively $\varphi_C$. 
Denote by $K_A$ the kernel of $\varphi$, i.e.
\begin{equation}\label{eq:ses_KA}
\ses{K_A}{G_B'\oplus G_C'}{A'}.
\end{equation}
$K_A$ is indecomposable and  has Loewy diagram
\begin{center}
\begin{tikzpicture}[scale=1]
\node (top) at (0,2) [] {$A'$};
\node (left) at (-2,0) [] {$B'$};
\node (right) at (2,0) [] {$C'$};
\node (bottomleft) at (-2,-2) [] {$A'$};
\node (bottomright) at (2,-2) [] {$A'$};
\draw[->, thick] (top) -- (left);
\draw[->, thick] (top) -- (right);
\draw[->, thick] (left) -- (bottomleft);
\draw[->, thick] (right) -- (bottomright);
\node (label) at (0,-0.5) [circle, inner sep=2pt, color=white, fill=black!50!] {$K_A$};
\end{tikzpicture}
\end{center}
Here $B', C'$ are the contragredient duals of $B, C$.
Taking the contragredient dual of \eqref{eq:ses_KA} gives
\[
\ses{A}{G_B \oplus G_C}{K_A'}
\]
with $K_A'$ the contragredient dual of $K_A$. Its Loewy diagram is 
\begin{center}
\begin{tikzpicture}[scale=1]
\node (topleft) at (-2,2) [] {$A$};
\node (topright) at (2,2) [] {$A$};
\node (left) at (-2,0) [] {$B$};
\node (right) at (2,0) [] {$C$};
\node (bottom) at (0,-2) [] {$A$};
\draw[->, thick] (topleft) -- (left);
\draw[->, thick] (topright) -- (right);
\draw[->, thick] (left) -- (bottom);
\draw[->, thick] (right) -- (bottom);
\node (label) at (0,0.5) [circle, inner sep=2pt, color=white, fill=black!50!] {$K_A'$};
\end{tikzpicture}
\end{center}
The injections $A' \xrightarrow{\iota_B} G_B'  \rightarrow G_B' \oplus G_C',  A' \xrightarrow{\iota_C} G_C' \rightarrow G_B' \oplus G_C''$ factor through $K_A$ and so dually the projections  $G_B  \oplus G_C \rightarrow G_B \xrightarrow{\pi_B} A, G_B  \oplus G_C \rightarrow G_C \xrightarrow{\pi_C} A$ factor through $K_A'$.  By construction $\pi_B + \pi_c$ is isomorphic to the image of $\pi$. Let $P_A$ be the kernel of $\pi : K_A \rightarrow A = \text{image}(\pi : K_A \rightarrow K_A)$. $P_A$ being in the kernel of $\pi$ means that $J_0$ acts semisimply on $P_A$. Moreover the only possibility for the Loewy diagram of $P_A$ is 
\begin{center}
\begin{tikzpicture}[scale=1]
\node (top) at (0,2) [] {$A$};
\node (left) at (-2,0) [] {$B$};
\node (right) at (2,0) [] {$C$};
\node (bottom) at (0,-2) [] {$A$};
\draw[->, thick] (top) -- (left);
\draw[->, thick] (top) -- (right);
\draw[->, thick] (left) -- (bottom);
\draw[->, thick] (right) -- (bottom);
\node (label) at (0,0) [circle, inner sep=2pt, color=white, fill=black!50!] {$P_A$};
\end{tikzpicture}
\end{center}
as desired. 
\end{proof}

\subsection{Proof of Proposition \ref{prop:log}}

We need to show that there exist indecomposable modules $\sfmod{\ell}{\slproj{r, s}^\pm}$ satisfying the non-split short exact sequences
\begin{align*}
&\dses {\sfmod{\ell}{\slindrel{r,s}^+}} {\sfmod{\ell}{\slproj{r, s}^+}}    {\sfmod{\ell+1}{\slindrel{r,s+1}^+}},\\
&\dses {\sfmod{\ell}{\slindrel{r,v-1}^+}} {\sfmod{\ell}{\slproj{r, v-1}^+}}    {\sfmod{\ell+2}{\slindrel{u-r,1}^+}},\\
&\dses {\sfmod{\ell}{\slindrel{r,s}^-}} {\sfmod{\ell}{\slproj{r, s}^-}}    {\sfmod{\ell-1}{\slindrel{r,s+1}^-}} ,\\
&\dses {\sfmod{\ell}{\slindrel{r,v-1}^-}} {\sfmod{\ell}{\slproj{r, v-1}^-}}    {\sfmod{\ell-2}{\slindrel{u-r,1}^-}} ,\\
\end{align*}
for $\ell \in\ZZ, r = 1, \dots,  u-1, s = 1, \dots, v-2$. 
We set 
\[
A:= \sfmod{\ell}{\sldis{r,s}^-}, \qquad B:= \begin{cases} \sfmod{\ell-1}{\sldis{r,s+1}^-} & s =1, \dots, v-2 \\ \sfmod{\ell-2}{\sldis{u-r,1}^-} & s = v-1 \end{cases}, \qquad
C:= \sfmod{\ell}{\sldis{u-r,v-s}^+}
\]
We need to verify Assumption \ref{assum_on_modules}:
\begin{enumerate}
\item Easily verified
\item Special case of Theorem \ref{thm:van_ext}
\item Holds with 
\[
E_B:= \begin{cases} \sfmod{\ell-1}{\slindrel{r,s+1}^-}& s =1, \dots, v-2 \\  \sfmod{\ell-2}{\slindrel{u-r,1}^-}& s = v-1 \end{cases}, \qquad
F_C:=  \sfmod{\ell}{\slindrel{r,s}^-}
\]
by Proposition \ref{existelle}.
\item Holds by Corollary 5.3 of \cite{C}
\end{enumerate}

The Proposition follows from Theorems \ref{thm:appendix} and \ref{thm:van_ext}.

\flushleft
\bibliographystyle{unsrt}

\end{document}